\DeclareMathOperator{\Dom}{Dom}
\DeclareMathOperator{\Orb}{Orb}
\newcommand{\mysetminus}{\mathbin{\fgebackslash}}
\newtheorem{theorem}{Theorem}[section]
\newtheorem{lemma}[theorem]{Lemma}
\newtheorem{proposition}[theorem]{Proposition}
\newtheorem{proposition-definition}[theorem]{Proposition-Definition}
\newtheorem{corollary}[theorem]{Corollary}
\newtheorem{question}[theorem]{Question} 
\theoremstyle{definition}
\newtheorem{definition}{Definition}
\theoremstyle{remark}
\newtheorem{remark}{Remark}
\newtheorem{example}{Example}
\renewcommand*\env@matrix[1][*\c@MaxMatrixCols c]{%
  \hskip -\arraycolsep
  \let\@ifnextchar\new@ifnextchar
  \array{#1}}
\theoremstyle{remark}
\newcommand*{\Scale}[2][4]{\scalebox{#1}{$#2$}}%
\newcommand{\Mod}[1]{\ (\textup{mod}\ #1)}     
\title[Heights in left, right, and total orbits]{Dynamical height growth: Left, right, and total orbits}   
\author[Wade Hindes]{Wade Hindes}
\begin{document} 
\maketitle
\renewcommand{\thefootnote}{}
\footnote{2010 \emph{Mathematics Subject Classification}: Primary: 37P15, 37A50. Secondary: 11G50.}
\begin{abstract} Let $S$ be a set of dominant rational self-maps on $\mathbb{P}^N$. We study the arithmetic and dynamical degrees of infinite sequences of $S$ obtained by sequentially composing elements of $S$ on the right and left. We then apply this insight to dynamical Galois theory.     
\end{abstract} 
\section{Introduction}
Given a set $S$ of dominant rational maps on $\mathbb{P}^N$ and an infinite sequence $\gamma=(\theta_1,\theta_2, \dots)$ of elements of $S$, then we are interested in two types of iterated processes attached to $\gamma$. Namely, the \emph{left iterative sequence} of maps,   
\[\gamma_n^-:=\theta_n\circ\theta_{n-1}\circ\dots\circ\theta_1\;\;\text{for all $n\geq1$},\] 
and the \emph{right iterative sequence} of maps,
\[\;\;\,\gamma_n^+:=\theta_1\circ\theta_{2}\circ\dots\circ\theta_n\;\;\text{for all $n\geq1$}.\]
In particular, given a suitable initial point $P\in\mathbb{P}^N$ we wish to study the \emph{left and right orbits} of the pair $(\gamma,P)$ given by  
\[\Orb_\gamma^-(P):=\big\{\gamma_n^-(P):n\geq0\big\}\;\;\text{and}\;\; \Orb_\gamma^+(P):=\big\{\gamma_n^+(P):n\geq0\big\}\]
respectively; here we include the identity function $\gamma_0:=\text{Id}_{\mathbb{P}^N}$ for convenience. The analytic and topological properties of these orbits have been previously studied in complex dynamics \cite{random1,random2,random3,random4,random5,random6}, and in this paper, we consider arithmetic analogs of this work. Specifically, if both $P$ and the maps in $S$ are defined over $\overline{\mathbb{Q}}$ and $h:\mathbb{P}^N(\overline{\mathbb{Q}})\rightarrow\mathbb{R}_{\geq0}$ is the absolute Weil height function \cite[\S 3.1]{SilvDyn}, then we are interested in the growth rate of $h(\gamma_n^+(P))$ and $h(\gamma_n^-(P))$ as we move within the left and right orbits of $(\gamma,P)$ respectively. For sets of morphisms, the growth rates of $h(\gamma_n^-(P))$ for left iteration were studied first in \cite{Kawaguchi} and revisited in \cite{stochastic}. In particular, one may construct canonical heights in this setting and recover several familiar facts from the standard theory of arithmetic dynamics \cite{SilvDyn}, where one iterates a single function (i.e., $\gamma$ is a constant sequence). However, there  appears to be relatively little known about heights when iterating on the right. Moreover when $N=1$, the arithmetic properties of $\Orb_\gamma^+(P)$ (for certain $P$ and certain $S$) control the size of the Galois extensions generated by $\gamma_n^+(x)=0$ and $n\geq1$; see Section \ref{sec:Galois}. Therefore, the growth rate of $h(\gamma_n^+(P))$ may be of interest to those studying dynamically generated Galois groups.  
\begin{remark} A further application of our work on left and right orbits is to the growing field of monoid (or semigroup) arithmetic dynamics \cite{monoid1,monoid2,stochastic,IJNT,monoid3,monoid4}. Here, one is instead interested in understanding the arithmetic properties of \emph{total orbits}, 
\begin{equation}\label{eq:totalorbit} 
\Orb_S(P):=\{f(P):\,f\in M_S\}=\bigcup_\gamma \Orb_\gamma^+(P)=\bigcup_\gamma \Orb_\gamma^-(P);
\end{equation} 
here $M_S$ is the monoid generated by $S$ (and the identity) with the operation of composition. However, in practice, if one understands left and right orbits for sufficiently many $\gamma$, then one has gained nontrivial insight into total orbits; for some examples of this heuristic, see \cite[Corollary 1.4]{stochastic}, \cite[Theorem 1.18]{Me:dyndeg}, \cite[Theorem 1.7]{IJNT}, Theorem \ref{thm:zero-one}, and Section \ref{sec:totalorbits}.    
\end{remark} 
As in the case of iterating a single map, some useful tools for analyzing heights in left and right orbits are the left and right dynamical degrees, i.e., the limiting values of $\deg(\gamma_n^-)^{1/n}$ and $\deg(\gamma_n^+)^{1/n}$ respectively. However, without much difficulty, one can construct examples for which the aforementioned limits do not exist \cite[Example 1.1]{Me:dyndeg}. Nevertheless, one expects that these limits converge for most sequences. To test this heuristic, we fix a probability measure $\nu$ on $S$, and extend to a probability measure $\bar{\nu}$ on the set of sequences of elements of $S$ via the product measure; see Section \ref{sec:notation} for more details. With this perspective, we prove that the limits of $\deg(\gamma_n^-)^{1/n}$ and $\deg(\gamma_n^+)^{1/n}$ (as we vary over sequences of $S$) are $\bar{\nu}$-almost surely constant and independent of the direction of iteration. Moreover, for finite sets $S$, we show that this constant bounds both $h(\gamma_n^-(P))^{1/n}$ and $h(\gamma_n^+(P))^{1/n}$ for large $n$; compare to \cite[Theorem 1.8]{Me:dyndeg} and \cite[Theorem 1]{KawaguchiSilverman}. However, to prove this second fact about heights we must enforce a condition on $S$, namely, that as we compose elements of $S$ we manage to avoid maps of degree one: 
\begin{definition} A set of dominant rational maps $S$ on $\mathbb{P}^N$ is called \emph{degree independent} if $\deg(f)\geq2$ for all $f$ in the semigroup generated by $S$; here the operation is composition.   
\end{definition} 
Likewise, since the maps in $S$ may have non-trivial indeterminacy loci, we must take care to ensure that the orbits we consider are actually well defined:
\begin{definition} Let $f$ be in the compositional semigroup generated by $S$, and let $I_f\subset \mathbb{P}^N$ be the indeterminacy locus of $f$. Then we set $\mathbb{P}^N(\overline{\mathbb{Q}})_S:=\displaystyle{\mathbb{P}^N(\overline{\mathbb{Q}})\mysetminus\cup_{f} I_f}$.      
\end{definition} 
With these notions in place, we prove our most general result relating the growth rate of degrees and the growth rate of heights in orbits. The proof is an adaptation and combination of the arguments given for left iteration (only) in Theorems 1.3 and 1.8 of \cite{Me:dyndeg}. Namely, we apply Kingman's subadditive ergodic theorem, Birkhoff's ergodic theorem, and ideas from \cite{SilvermanPN}. In what follows, $\mathbb{E}_\nu[\log\deg(\phi)]=\int_S\log\deg(\phi)d\nu$ denotes the expected value of the random variable $\log\deg$ on $S$.         
\begin{theorem}\label{thm:rationalmaps} Let $S$ be a set of dominant rational self-maps on $\mathbb{P}^N(\overline{\mathbb{Q}})$ and let $\nu$ be a discrete probability measure on $S$. Then the following statements hold:  \vspace{.1cm} 
\begin{enumerate} 
\item[\textup{(1)}] If $\mathbb{E}_\nu[\log\deg(\phi)]$ exists, then there is a constant $\delta_{S,\nu}$ such that the limits \vspace{.1cm} 
\[\lim_{n\rightarrow\infty}\deg(\gamma_n^{-})^{1/n}=\delta_{S,\nu}=\lim_{n\rightarrow\infty}\deg(\gamma_n^{+})^{1/n}\vspace{.1cm}\]
hold (simultaneously) for $\bar{\nu}$-almost every $\gamma\in\Phi_S$. \vspace{.25cm}  
\item[\textup{(2)}] If $S$ is finite and degree independent, then for $\bar{\nu}$-almost every $\gamma\in\Phi_S$ the bounds \vspace{.075cm}
\[\limsup_{n\rightarrow\infty} h(\gamma_n^{\pm}(P))^{1/n}\leq\delta_{S,\nu}\vspace{.075cm}\]
hold (simultaneously) for all $P\in\mathbb{P}^N(\overline{\mathbb{Q}})_S$.  
\end{enumerate}  
\end{theorem}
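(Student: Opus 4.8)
The plan is to treat the two parts via ergodic theory applied to the shift map on the space $\Phi_S$ of sequences, equipped with the product measure $\bar\nu$.

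\medskip

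\textbf{Part (1): existence of $\delta_{S,\nu}$ and equality of left/right dynamical degrees.} First I would set up the shift $\sigma:\Phi_S\to\Phi_S$, $\sigma(\theta_1,\theta_2,\dots)=(\theta_2,\theta_3,\dots)$, which is measure-preserving and ergodic for $\bar\nu$ (the product measure) by the classical Kolmogorov $0$–$1$ law / Bernoulli-shift ergodicity. The key inputs are the submultiplicativity of degrees of rational maps, $\deg(f\circ g)\le \deg(f)\deg(g)$ (as in \cite{SilvermanPN}), which gives $\log\deg(\gamma_{m+n}^{-})\le \log\deg\big((\sigma^m\gamma)_n^{-}\big)+\log\deg(\gamma_m^{-})$, so $a_n(\gamma):=\log\deg(\gamma_n^{-})$ is a subadditive cocycle over $\sigma$. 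The integrability hypothesis is exactly that $\mathbb{E}_\nu[\log\deg(\phi)]$ exists, which bounds $a_1\in L^1(\bar\nu)$. Kingman's subadditive ergodic theorem then yields a constant $\delta_{S,\nu}$ with $\tfrac1n a_n(\gamma)\to\log\delta_{S,\nu}$ for $\bar\nu$-a.e.\ $\gamma$, i.e.\ $\deg(\gamma_n^{-})^{1/n}\to\delta_{S,\nu}$. For the right orbits, I would observe that $\gamma_n^{+}=\theta_1\circ\cdots\circ\theta_n$ is obtained from the left construction by reversing the first $n$ entries; since $\bar\nu$ is a product measure it is invariant under such reversal in distribution, so the random variable $\deg(\gamma_n^+)$ has the same law as $\deg(\gamma_n^-)$. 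Combined with a second subadditivity estimate $\log\deg(\gamma_{m+n}^{+})\le\log\deg(\gamma_m^{+})+\log\deg\big((\sigma^m\gamma)_n^{+}\big)$, Kingman again gives a.s.\ convergence of $\deg(\gamma_n^{+})^{1/n}$ to a constant, and the distributional identity forces that constant to equal $\delta_{S,\nu}$. This is essentially the argument of \cite[Theorems 1.3]{Me:dyndeg} carried out symmetrically in both directions.

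\medskip

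\textbf{Part (2): the height bound for finite, degree independent $S$.} Fix $\varepsilon>0$. Since $S$ is finite, there is a uniform constant $C$ (depending on $S$) in the functorial height inequality: for every $\phi\in S$ and every $Q\in\mathbb{P}^N(\overline{\mathbb{Q}})$ outside $I_\phi$ one has $h(\phi(Q))\le \deg(\phi)\,h(Q)+C$; moreover — and this is where degree independence enters — there is a uniform \emph{lower} bound of the Silverman type, $h(\phi(Q))\ge \tfrac{1}{\deg(\phi)}h(Q)-C'$, or more usefully a bound ensuring $h(\gamma_n^{\pm}(P))$ grows at least geometrically, which lets one absorb the additive error. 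Iterating the upper inequality along a left orbit gives
\[
h(\gamma_n^{-}(P))\le \deg(\theta_n)\cdots\deg(\theta_1)\,h(P)+C\sum_{k=0}^{n-1}\deg(\theta_n)\cdots\deg(\theta_{n-k+1}).
\]
The leading term is $\deg(\gamma_n^{-})\,h(P)$, which by Part (1) is $(\delta_{S,\nu}+o(1))^n h(P)$ almost surely; the key point is that the geometric tail sum is dominated by a constant times $\deg(\gamma_n^-)$ provided the partial degree products grow, which degree independence ($\deg\ge2$ everywhere in the semigroup) guarantees — each suffix product $\deg(\theta_n)\cdots\deg(\theta_{n-k+1})$ is at least $2^{k}$ while also at most $\deg(\gamma_n^-)/\deg(\theta_{n-k})\cdots\deg(\theta_1)$, so the sum is $O(\deg(\gamma_n^-))$. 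Taking $n$th roots and $\limsup$ yields $\limsup_n h(\gamma_n^{-}(P))^{1/n}\le\delta_{S,\nu}$. The right-orbit case is identical after reversing indices: $h(\gamma_n^{+}(P))\le\deg(\theta_1)\cdots\deg(\theta_n)h(P)+C\sum_{k}\deg(\theta_1)\cdots\deg(\theta_k)$, and the same degree independence estimate controls the tail. The uniformity of $C$ over the finite set $S$ is what makes both iterations work, and the statement "for all $P\in\mathbb{P}^N(\overline{\mathbb{Q}})_S$" is handled because on that set every $\gamma_n^{\pm}$ is defined at $P$, so no indeterminacy issue arises.

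\medskip

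\textbf{Main obstacle.} I expect the genuinely delicate step to be controlling the additive error terms uniformly in $n$ when the maps are merely rational (not morphisms): the naive bound on $\sum_k\deg(\theta_n)\cdots\deg(\theta_{n-k+1})$ in terms of $\deg(\gamma_n^-)$ is not automatic because degrees are only submultiplicative, $\deg(f\circ g)$ can be strictly smaller than $\deg(f)\deg(g)$, so the "suffix product" need not actually equal the degree of the corresponding partial composition. The fix, following \cite{SilvermanPN} and \cite[Theorem 1.8]{Me:dyndeg}, is to work with the actual degrees $d_k:=\deg(\gamma_k^{-})$ rather than with products of individual degrees, re-deriving the iterated height inequality in the form $h(\gamma_{k+1}^-(P))\le \deg(\theta_{k+1})h(\gamma_k^-(P))+C$ and then summing a genuinely telescoping/geometric series in the $d_k$; degree independence ($d_k\ge 2^{?}$ growth, or at least $d_{k+1}\ge d_k$ up to bounded ratio) is exactly the hypothesis that makes this telescoping absorb the errors. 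Once that bookkeeping is done correctly the $n$th-root limsup is immediate from Part (1).
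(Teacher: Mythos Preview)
Your Part (1) is correct and matches the paper's argument: Kingman applied to the subadditive cocycles $g_n^\pm(\gamma)=\log\deg(\gamma_n^\pm)$ over the Bernoulli shift, with the reversal symmetry $\tau_n:S^n\to S^n$ forcing $\mathbb{E}_{\bar\nu}[g_n^-]=\mathbb{E}_{\bar\nu}[g_n^+]$ and hence equality of the two limiting constants.

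Part (2), however, has a real gap, and your ``Main obstacle'' paragraph does not close it. When you iterate the single-step bound $h(\theta(Q))\le\deg(\theta)h(Q)+C$ you obtain
\[
h(\gamma_n^-(P))\le\Big(\prod_{i=1}^n\deg(\theta_i)\Big)h(P)+C\sum_{k=0}^{n-1}\prod_{j=n-k+1}^{n}\deg(\theta_j),
\]
and the leading factor is $\prod_i\deg(\theta_i)$, \emph{not} $\deg(\gamma_n^-)$. For genuine rational (non-morphism) maps these differ: by Birkhoff the product satisfies $(\prod_i\deg(\theta_i))^{1/n}\to\exp(\mathbb{E}_\nu[\log\deg])$ a.s., whereas Part (1) gives $\deg(\gamma_n^-)^{1/n}\to\delta_{S,\nu}$, and in general $\delta_{S,\nu}\le\exp(\mathbb{E}_\nu[\log\deg])$ with strict inequality possible. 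So your argument only yields $\limsup h(\gamma_n^\pm(P))^{1/n}\le\exp(\mathbb{E}_\nu[\log\deg])$, which is weaker than the claim. Your proposed fix, rewriting the recursion as $h(\gamma_{k+1}^-(P))\le\deg(\theta_{k+1})h(\gamma_k^-(P))+C$, is the \emph{same} recursion and iterates to the \emph{same} product $\prod_i\deg(\theta_i)$; it never produces the composite degrees $d_k=\deg(\gamma_k^-)$ you say you want to use.

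The paper's device is a $k$-blocking argument you have not reproduced. For each fixed $k$, finiteness of $S$ gives a uniform constant $C(k,S)$ with $h(f(Q))\le\deg(f)h(Q)+C(k,S)$ for every $f\in M_{S,k}$. Writing $\gamma_{nk}^\pm$ as a composition of $n$ blocks $f_{i,k}^\pm\in M_{S,k}$ and iterating this $k$-level bound (degree independence makes the error sum geometric since each $\deg(f_{i,k}^\pm)\ge2$) gives
\[
h(\gamma_{nk}^\pm(P))\le\Big(\prod_{i=1}^n\deg(f_{i,k}^\pm(\gamma))\Big)\big(h(P)+C(k,S)\big).
\]
Now Birkhoff for the $k$-shift $T^k$ applied to $F_{(k)}^\pm(\gamma)=\tfrac{1}{k}\log\deg(\gamma_k^\pm)$ shows the $nk$-th root of the right side tends a.s.\ to $\exp\big(\mathbb{E}_{\bar\nu}[g_k^\pm]/k\big)$. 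After handling indices not divisible by $k$ (done differently for left and right iteration), one lets $k\to\infty$ and uses the Kingman identification $\log\delta_{S,\nu}=\inf_k\mathbb{E}_{\bar\nu}[g_k^\pm]/k$ to squeeze the bound down to $\delta_{S,\nu}$. The passage $k\to\infty$ is the step that bridges the gap between $\exp(\mathbb{E}_\nu[\log\deg])$ (your bound, the case $k=1$) and $\delta_{S,\nu}$, and it is missing from your sketch.
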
 
Motivated by the existence of the constant $\delta_{S,\nu}$ we make the following definition: 
\begin{definition} For $(S,\nu)$ as in Theorem \ref{thm:rationalmaps}, we call $\delta_{S,\nu}$ the \emph{dynamical degree} of $(S,\nu)$.\end{definition} 
Although Theorem \ref{thm:rationalmaps} gives an upper bound on the growth rate of heights in orbits that is independent of the direction of iteration and the initial point, the same cannot be said in general for lower bounds. Heuristically, if $P$ has small height, then the direction of iteration can matter greatly. We illustrate this  point with the following example.   
\begin{example}{\label{eg:left-right difference}} Let $S=\{x^2-x,3x^2\}$ with $\phi_1=x^2-x$ and $\phi_2=3x^2$, and define $\nu$ on $S$ determined by $\nu(\phi_1)=1/2=\nu(\phi_2)$. Then viewing $S$ as a set of maps on $\mathbb{P}^1$, we consider the possible left and right orbits of $P=1$ and compute that \vspace{.1cm} 
\begin{equation*}
\begin{split} 
\liminf_{n\rightarrow\infty} h(\gamma_n^+(P))^{1/n}=0\;\;&\text{and}\;\;\limsup_{n\rightarrow\infty} h(\gamma_n^+(P))^{1/n}=2\qquad\text{($\bar{\nu}$-almost surely)}\\ 
\;\;\liminf_{n\rightarrow\infty} h(\gamma_n^-(P))^{1/n}=0\;\;&\text{and}\;\;\limsup_{n\rightarrow\infty} h(\gamma_n^-(P))^{1/n}=0\qquad\text{($\bar{\nu}$-probability $1/2$)} \\
\;\;\liminf_{n\rightarrow\infty} h(\gamma_n^-(P))^{1/n}=2\;\;&\text{and}\;\;\limsup_{n\rightarrow\infty} h(\gamma_n^-(P))^{1/n}=2\qquad\text{($\bar{\nu}$-probability $1/2$)} 
\end{split} 
\end{equation*} 
In particular, the direction of iteration may greatly affect the growth rate of heights in orbits.    
\end{example} 
However, for morphisms and sufficiently generic initial points, we are able to prove fairly uniform results. Namely, outside of a set of points $P$ of bounded height, we prove that the limits (not merely the limsups) of both $h(\gamma_n^-(P))^{1/n}$ and $h(\gamma_n^+(P))^{1/n}$ are equal to the dynamical degree, almost surely. Moreover, the dynamical degree is easy to compute for finite sets of morphisms; it is a weighted geometric mean of the degrees of the maps in $S$; compare to \cite[Theorem 1.5]{Me:dyndeg}. The main tools we use to prove this result are Birkhoff's Ergodic Theorem and the Law of Iterated Logarithms for simple random walks; see Section \ref{sec:notation} for statements.   
\begin{theorem}\label{thm:iteratedlogs} Let $S$ be a finite set of endomorphisms of $\mathbb{P}^N(\overline{\mathbb{Q}})$ all of degree at least two, and let $\nu$ be a discrete probability measure on $S$. Then there exists a constant $B_S$ such that the following statements hold: \vspace{.25cm}
\begin{enumerate} 
\item[\textup{(1)}] The dynamical degree is given by $\displaystyle{\delta_{S,\nu}=\prod_{\phi\in S}\deg(\phi)^{\nu(\phi)}}$. \vspace{.3cm} 
\item[\textup{(2)}] For $\bar{\nu}$-almost every $\gamma\in\Phi_S$, the limits \vspace{.1cm}  
\[\lim_{n\rightarrow\infty}h(\gamma_n^-(P))^{1/n}=\delta_{S,\nu}=\lim_{n\rightarrow\infty}h(\gamma_n^+(P))^{1/n}\vspace{.15cm}\]
hold (simultaneously) for all $P$ with $h(P)>B_S$. \vspace{.4cm}  
\item[\textup{(3)}] If the variance $\sigma_{S,\nu}^2$ of $\log(\deg(\phi))$ is nonzero, then for $\bar{\nu}$-almost every $\gamma\in\Phi_S$, \vspace{.1cm}
\[\limsup_{n\rightarrow\infty}\frac{\log\bigg(\mathlarger{\frac{h(\gamma_n^{\pm}(P))}{\delta_{S,\nu}^n}}\bigg)}{\sigma_{S,\nu}\sqrt{2n\log\log n}}=1=\limsup_{n\rightarrow\infty}\frac{\log\bigg(\mathlarger{\frac{\delta_{S,\nu}^n}{h(\gamma_n^{\pm}(P))}}\bigg)}{\sigma_{S,\nu}\sqrt{2n\log\log n}},\vspace{.3cm}\]  
hold (simultaneously) for all $P$ with $h(P)>B_S$. \vspace{.1cm}
\end{enumerate}  
\end{theorem}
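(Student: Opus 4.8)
The plan is to reduce everything to the height machine for endomorphisms of $\mathbb{P}^N$ together with classical limit theorems for i.i.d.\ sums, the crucial point being that degree is \emph{exactly} multiplicative under composition of morphisms. Throughout, write $d_i=\deg(\theta_i)$, $D_n=\prod_{i=1}^nd_i$, and $X_i=\log d_i-\mathbb{E}_\nu[\log\deg]$; under $\bar\nu$ the sequence $(\theta_i)$ is i.i.d., so the $X_i$ are i.i.d.\ real random variables of mean zero and variance $\sigma_{S,\nu}^2$. For (1): since each $\theta_i$ is a morphism, $\deg(\gamma_n^{\pm})=D_n$ exactly, whence $\tfrac1n\log\deg(\gamma_n^{\pm})=\tfrac1n\sum_{i\le n}\log d_i\to\mathbb{E}_\nu[\log\deg]$ $\bar\nu$-a.s.\ by the strong law of large numbers; exponentiating gives $\deg(\gamma_n^{\pm})^{1/n}\to\prod_{\phi\in S}\deg(\phi)^{\nu(\phi)}$, and the left-hand limit is $\delta_{S,\nu}$ by Theorem~\ref{thm:rationalmaps}(1).

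For (2), I would first record a purely deterministic two-sided estimate. For a morphism $\phi$ of $\mathbb{P}^N$ there are constants $C_\phi^{-},C_\phi^{+}\ge 0$ with $\deg(\phi)h(Q)-C_\phi^{-}\le h(\phi(Q))\le\deg(\phi)h(Q)+C_\phi^{+}$ for all $Q\in\mathbb{P}^N(\overline{\mathbb{Q}})$; put $C=\max_{\phi\in S}C_\phi^{-}$ and $C'=\max_{\phi\in S}C_\phi^{+}$, which are finite since $S$ is finite. Writing $\gamma_n^-(P)=\theta_n(\gamma_{n-1}^-(P))$ and telescoping after dividing by $D_n$, and similarly peeling $\gamma_n^+(P)$ from the outside, one uses $d_i\ge 2$ (so the resulting geometric tail sums are $\le\sum_{k\ge1}2^{-k}=1$) to conclude, for every $\gamma$, every $n$, and every $P$ with $h(P)>C$,
\[\big(h(P)-C\big)D_n\ \le\ h(\gamma_n^{\pm}(P))\ \le\ \big(h(P)+C'\big)D_n.\]
Take $B_S=C$. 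For $h(P)>B_S$ the middle term is squeezed between two fixed positive multiples of $D_n$, so $h(\gamma_n^{\pm}(P))^{1/n}$ has the same limit as $D_n^{1/n}$, namely $\delta_{S,\nu}$ by (1); and this holds, on the single full-$\bar\nu$-measure set of $\gamma$ produced in (1), simultaneously for all such $P$.

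For (3), take logarithms in the displayed estimate to get
\[\log\!\Big(h(\gamma_n^{\pm}(P))/\delta_{S,\nu}^n\Big)=\big(\log D_n-n\log\delta_{S,\nu}\big)+r_n(P)=\sum_{i=1}^nX_i+r_n(P),\]
where $|r_n(P)|\le\max\big(|\log(h(P)-C)|,|\log(h(P)+C')|\big)$ is bounded by a constant depending only on $P$. Assuming $\sigma_{S,\nu}^2\ne0$, the Hartman--Wintner law of the iterated logarithm gives, $\bar\nu$-a.s., $\limsup_n\big(\sum_{i\le n}X_i\big)/\big(\sigma_{S,\nu}\sqrt{2n\log\log n}\big)=1$ and $\liminf_n\big(\sum_{i\le n}X_i\big)/\big(\sigma_{S,\nu}\sqrt{2n\log\log n}\big)=-1$. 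Since $\sqrt{2n\log\log n}\to\infty$ swallows the bounded term $r_n(P)$, the first identity gives the left $\limsup$ in (3) and the second (applied to $-\sum_{i\le n}X_i=\log(\delta_{S,\nu}^n/h(\gamma_n^{\pm}(P)))-r_n(P)$) gives the right one, again on one full-measure set of $\gamma$ and for all $P$ with $h(P)>B_S$.

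The only delicate point is the bookkeeping needed to make all conclusions hold for every admissible $P$ at once: this is precisely why it pays to isolate the telescoping inequality as a statement true for every $\gamma$ and every $P$, leaving a single $P$-independent probabilistic input (the SLLN for (1)--(2), the LIL for (3)) about the degree sequence. A secondary but essential point is the lower bound $h(\phi(Q))\ge\deg(\phi)h(Q)-C_\phi^{-}$, which holds because a morphism has empty indeterminacy locus; this is exactly why the hypotheses of Theorem~\ref{thm:iteratedlogs} (all maps endomorphisms of degree $\ge2$) are stronger than those of Theorem~\ref{thm:rationalmaps}, and where finiteness of $S$ enters to make $C,C'$ uniform.
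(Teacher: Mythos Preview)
Your proof is correct and follows essentially the same route as the paper. The paper packages the telescoping estimate as a separate lemma (Lemma~\ref{lem:tate}, giving $\big|h(\rho(Q))/\deg(\rho)-h(Q)\big|\le C_S/(d_S-1)$ for every $\rho$ in the monoid) and then derives from it the two-sided bound you state; it phrases part~(1) via Birkhoff's ergodic theorem for the shift rather than the SLLN, but for i.i.d.\ coordinates these are the same statement. The decomposition in part~(3) into the i.i.d.\ sum $\sum X_i$ plus a bounded remainder, followed by Hartman--Wintner, is exactly what the paper does as well, and your observation that the probabilistic input depends only on the degree sequence (hence one full-measure set works for all admissible $P$) matches the paper's handling of the ``simultaneously for all $P$'' clause.
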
 
We can rewrite the bounds in Theorem \ref{thm:iteratedlogs} to give improved estimates for $h(\gamma_n^-(P))$ and $h(\gamma_n^+(P))$ that work almost surely. In particular, these bounds have a main term of $\delta_{S,\nu}^n$ and are (at least in an asymptotic sense) independent of both $\gamma$ and $P$; hence, we have reduced the randomness of heights in generic left and right orbits. Specifically, suppose that $S$, $\nu$, $\delta_{S,\nu}$, $B_S$, $\sigma_{S,\nu}^2$ and $P$ satisfy the conditions of the Theorem \ref{thm:iteratedlogs}, and let $\epsilon>0$. Then for almost every $\gamma$ there exists $N_{\gamma,P,\epsilon}$ such that \vspace{.15cm}
\[\delta_{S,\nu}^{\, n-(1+\epsilon)\log_{\delta_{S,\nu}}(e)\,\sigma_{S,\nu}\sqrt{2n\log\log n}}\leq h(\gamma_n^{\pm}(P))\leq \delta_{S,\nu}^{\, n+(1+\epsilon)\log_{\delta_{S,\nu}}(e)\,\sigma_{S,\nu}\sqrt{2n\log\log n}} \vspace{.15cm}\]
holds for all $n\geq N_{\gamma,P,\epsilon}$. It would be interesting to know if and when similar type bounds hold for rational functions; for a conjecture along these lines in the case of iterating a single rational map, see \cite[Conjecture 2]{SilvermanPN}.  

As an application, we can use Theorem \ref{thm:iteratedlogs} to count the number of iterates in left and right orbits of bounded height; compare to \cite[Corollary 1.16]{Me:dyndeg} and \cite[Proposition 3]{KawaguchiSilverman}. \vspace{.1cm}
   
\begin{corollary}\label{cor:escapeptshtbds} Let $S$, $\nu$, $\delta_{S,\nu}$, and $B_S$ be as in Theorem \ref{thm:iteratedlogs}. Then for $\bar{\nu}$-almost every $\gamma\in\Phi_S$ the limits    \vspace{.15cm} 
\[\lim_{B\rightarrow\infty}\frac{\#\{Q\in\Orb_\gamma^-(P)\,:\,h(Q)\leq B\}}{\log(B)}=\frac{1}{\log\delta_{S,\nu}}=\lim_{B\rightarrow\infty}\frac{\#\{W\in\Orb_\gamma^+(P)\,:\,h(W)\leq B\}}{\log(B)} \vspace{.15cm}\] 
hold (simultaneously) for all $P$ with $h(P)>3B_S$.      
\end{corollary}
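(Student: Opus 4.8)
\emph{Proof plan.} The plan is to reduce the statement to the easier problem of counting \emph{iterates} — i.e.\ indices $n$ — with $h(\gamma_n^{\pm}(P))\le B$, and then to feed in the geometric growth rate $h(\gamma_n^{\pm}(P))^{1/n}\to\delta_{S,\nu}$ supplied by Theorem \ref{thm:iteratedlogs}(2). Note first that since every $\phi\in S$ has degree at least two, Theorem \ref{thm:iteratedlogs}(1) gives $\delta_{S,\nu}=\prod_{\phi}\deg(\phi)^{\nu(\phi)}\ge 2$, so $\log\delta_{S,\nu}>0$ and the right-hand side of the corollary makes sense. Using finiteness of $S$ and functoriality of heights for morphisms, fix a constant $C\ge 0$ with
\[\deg(\phi)h(Q)-C\ \le\ h(\phi(Q))\ \le\ \deg(\phi)h(Q)+C\qquad\text{for all }\phi\in S,\ Q\in\mathbb{P}^N(\overline{\mathbb{Q}}),\]
and enlarge the constant $B_S$ of Theorem \ref{thm:iteratedlogs} if necessary so that $B_S\ge C$ (its conclusions only improve when $B_S$ grows).

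The crucial intermediate claim is that for $h(P)>3B_S$ the sequences $\big(h(\gamma_n^-(P))\big)_{n\ge0}$ and $\big(h(\gamma_n^+(P))\big)_{n\ge0}$ are strictly increasing; this is exactly where the factor $3$ enters, and I expect it to be the main obstacle. For the left orbit it is a one-line induction: $\gamma_{n+1}^-(P)=\theta_{n+1}(\gamma_n^-(P))$, so $h(\gamma_{n+1}^-(P))\ge 2h(\gamma_n^-(P))-C>h(\gamma_n^-(P))$ as soon as $h(\gamma_n^-(P))>C$, and the hypothesis $h(P)>3B_S\ge 3C$ propagates this past every stage. The right orbit is more delicate since there is no one-step recursion: instead $\gamma_{n+1}^+(P)=\big(\theta_1\circ\cdots\circ\theta_n\big)\big(\theta_{n+1}(P)\big)$ while $\gamma_n^+(P)=\big(\theta_1\circ\cdots\circ\theta_n\big)(P)$. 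Writing $d_n:=\deg(\theta_1\circ\cdots\circ\theta_n)=\prod_{i\le n}\deg(\theta_i)\ge 2^n$ and telescoping the displayed inequalities along the composition, one gets $h\big((\theta_1\circ\cdots\circ\theta_n)(Q)\big)\ge d_n\big(h(Q)-C\big)$ and $\le d_n\big(h(Q)+C\big)$ for every $Q$, because the accumulated error is a geometric sum dominated by $Cd_n$ (all degrees are $\ge2$). Applying the first inequality with $Q=\theta_{n+1}(P)$ and the second with $Q=P$, and using $h(\theta_{n+1}(P))\ge 2h(P)-C>h(P)+2C$ (valid because $h(P)>3C$), yields $h(\gamma_{n+1}^+(P))\ge d_n\big(h(\theta_{n+1}(P))-C\big)>d_n\big(h(P)+C\big)\ge h(\gamma_n^+(P))$. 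Strict monotonicity in both directions forces the maps $n\mapsto\gamma_n^{\pm}(P)$ to be injective, so
\[\#\{Q\in\Orb_\gamma^{\pm}(P):h(Q)\le B\}=\#\{n\ge 0:h(\gamma_n^{\pm}(P))\le B\}=:N^{\pm}_\gamma(B),\]
and each $N^{\pm}_\gamma(B)$ is finite since the heights tend to infinity.

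It remains to compute $\lim_{B\to\infty}N^{\pm}_\gamma(B)/\log B$. By Theorem \ref{thm:iteratedlogs}(2) there is a $\bar{\nu}$-conull set of sequences $\gamma$ such that $\tfrac1n\log h(\gamma_n^{\pm}(P))\to \ell:=\log\delta_{S,\nu}$ (simultaneously, in both directions) for every $P$ with $h(P)>B_S$, in particular for every $P$ with $h(P)>3B_S$. Fix such a $\gamma$ and such a $P$, and let $0<\epsilon<\ell$; choose $n_0$ with $(\ell-\epsilon)n\le\log h(\gamma_n^{\pm}(P))\le(\ell+\epsilon)n$ for all $n\ge n_0$. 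Then $n\ge n_0$ together with $h(\gamma_n^{\pm}(P))\le B$ forces $n\le \log B/(\ell-\epsilon)$, while $n_0\le n\le \log B/(\ell+\epsilon)$ forces $h(\gamma_n^{\pm}(P))\le B$; hence
\[\frac{\log B}{\ell+\epsilon}-n_0\ \le\ N^{\pm}_\gamma(B)\ \le\ \frac{\log B}{\ell-\epsilon}+n_0 .\]
Dividing by $\log B$, letting $B\to\infty$ and then $\epsilon\to0$ gives $\lim_{B\to\infty}N^{\pm}_\gamma(B)/\log B=1/\ell=1/\log\delta_{S,\nu}$; since the conull set of $\gamma$ above is independent of $P$, combining this with the displayed equality of counts proves the corollary for all $P$ with $h(P)>3B_S$ simultaneously.
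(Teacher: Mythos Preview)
Your proof is correct and follows the same two-step strategy as the paper: first show that $n\mapsto\gamma_n^{\pm}(P)$ is injective so that counting orbit points reduces to counting indices, then feed in $h(\gamma_n^{\pm}(P))^{1/n}\to\delta_{S,\nu}$ via the standard sandwich argument (your final display matches the paper's almost verbatim).

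The only substantive difference is in how injectivity is obtained. You prove the stronger statement that $n\mapsto h(\gamma_n^{\pm}(P))$ is strictly increasing, treating the two directions separately and, for the right orbit, invoking the telescoped two-sided bound $d_n(h(Q)-C)\le h\big((\theta_1\circ\cdots\circ\theta_n)(Q)\big)\le d_n(h(Q)+C)$. The paper instead proves bare injectivity in one stroke for both directions (Lemma~\ref{lem:n'stopoints}): if $\gamma_n^{\pm}(P)=\gamma_m^{\pm}(P)$ with $n>m$, the Tate bound of Lemma~\ref{lem:tate} gives $\deg(\gamma_n^{\pm})/\deg(\gamma_m^{\pm})\le (h(P)+B_S)/(h(P)-B_S)$, while the left side is $\ge 2$, forcing $h(P)\le 3B_S$. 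This is a bit cleaner (uniform in the direction of iteration) and uses the sharper constant $B_S=C_S/(d_S-1)$ directly, so the paper does not need your enlargement $B_S\ge C$. Your route buys a stronger intermediate conclusion (monotonicity), though only injectivity is actually used.
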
    
Although Theorem \ref{thm:iteratedlogs} and Corollary \ref{cor:escapeptshtbds} give nice descriptions of the growth rate of heights in generic left and right orbits, it is natural to ask what can be said in the non-generic case. Is it possible to prove a result somewhere in-between Theorem \ref{thm:rationalmaps} and Theorem \ref{thm:iteratedlogs}? Likewise, can we prove a result for (suitable) infinite sets $S$? For left iteration of morphisms, we have canonical heights at our disposal \cite{stochastic,Kawaguchi}, but this is not the case when iterating on the right; see Remark \ref{rem:nocanht} below. Moreover, understanding heights in right orbits can be useful for understanding (generalized) dynamical Galois groups; see Section \ref{sec:Galois}. As a first step (with the case of left iteration in mind), we assume that $S$ have further properties, which we now discuss. It is well known that if $\phi:\mathbb{P}^N(\overline{\mathbb{Q}})\rightarrow\mathbb{P}^N(\overline{\mathbb{Q}})$ is a morphism defined over $\overline{\mathbb{Q}}$ of degree $d_\phi$, then 
\begin{equation}\label{functoriality}
h(\phi(P))=d_\phi h(P)+O_{\phi}(1)\;\;\;\text{for all $P\in\mathbb{P}^N(\overline{\mathbb{Q}})$;} \vspace{.1cm} 
\end{equation}  
see, for instance, \cite[Theorem 3.11]{SilvDyn}. With this in mind, we let 
\begin{equation}{\label{htconstant}} 
C(\phi):=\sup_{P \in \mathbb{P}^N(\bar{\mathbb{Q}})} \Big\vert h(\phi(P))-d_\phi h(P)\Big\vert 
\end{equation} 
be the smallest constant needed for the bound in (\ref{functoriality}). Then, in order to control height growth rates for sequences in $S$, we define the following fundamental notion; compare to \cite{stochastic,Me:dyndeg,Kawaguchi}. 
\begin{definition}\label{def:htcontrolled} 
A set $S$ of endomorphisms of $\mathbb{P}^N(\overline{\mathbb{Q}})$ is called \emph{height controlled} if the following properties hold: \vspace{.1cm} 
\begin{enumerate} 
\item $d_S:=\inf\{d_\phi:\phi\in S\}$ is at least $2$. \vspace{.15cm}
\item $C_S:=\sup\{C(\phi): \phi\in S\}$ is finite. \vspace{.1cm}
\end{enumerate} 
\end{definition} 

\begin{remark}We note first that any finite set of morphisms of degree at least $2$ is height controlled. To construct infinite collections, let $T$ be any non-constant set of maps on $\mathbb{P}^1$ and let $S_T=\{\phi\circ x^d\,: \phi\in T,\, d\geq2\}$. Then $S_T$ is height controlled and infinite; a similar construction works for $\mathbb{P}^N$ in any dimension. For another type of example, let $\mathcal{U}$ be the set of roots of unity in $\overline{\mathbb{Q}}$. Then $S=\{x^2+u\,:\, u\in \mathcal{U}\}$ is a height controlled collection of maps on $\mathbb{P}^1$. Moreover, it is worth pointing out that $S$ has a corresponding probability measure given by embedding $\mathcal{U}$ in the unit circle (in $\mathbb{C}$) and then taking the Haar measure on the circle.  
\end{remark} 
With the notion of height control morphisms in place, we prove a result for right iteration in-between Theorem \ref{thm:rationalmaps} and Theorem \ref{thm:iteratedlogs} above; compare to stronger results for left iteration \cite[Theorem 1.2]{stochastic} and \cite[Theorem 1.15]{Me:dyndeg}. However before stating this result, we make a few more notes on the differences between left and right iteration. First, as was mentioned before, canonical heights (in the usual sense) do not exist for right-iteration. That is, in principle one must keep track of both the corresponding liminf and limsup; see statement (1) of Theorem \ref{thm:zero-one} and Remark \ref{rem:nocanht} below. This is a drawback of right-iteration. On the other hand, there are certain advantages as well. For instance, ideally one would like to determine whether or not the total orbit (\ref{eq:totalorbit}) has a certain property by sampling a right or left orbit (and testing that same property). As an example, if a right (or left) orbit of $P$ is finite with positive probability, is it true that $\Orb_S(P)$ is necessarily finite? This statement turns out to be true for right orbits and false for left; for justification, see both Theorem \ref{thm:zero-one} below and \cite[Example 1.10]{Me:dyndeg}.    
  
\begin{theorem}\label{thm:zero-one} Let $S$ be a height controlled set of endomorphisms of $\mathbb{P}^N(\overline{\mathbb{Q}})$ all defined over a fixed number field $K$ and let $\nu$ be a discrete probability measure on $S$. Then the following statements hold: \vspace{.3cm} 
\begin{enumerate}
\item[\textup{(1)}] For all $P$ and all $\gamma$, both 
\[\displaystyle{\liminf_{n\rightarrow\infty}\frac{h(\gamma_n^+(P))}{\deg(\gamma_n^+)}}\;\;\; \text{and}\;\;\;\displaystyle{\limsup_{n\rightarrow\infty}\frac{h(\gamma_n^+(P))}{\deg(\gamma_n^+)}}\] 
exist and are $h(P)+O(1)$. \\[3pt] 
\item[\textup{(2)}] For all $P$, the total orbit $\Orb_S(P)$ of $P$ is infinite if and only if \vspace{.1cm}  
\[\qquad0<\displaystyle{\limsup_{n\rightarrow\infty}\frac{h(\gamma_n^+(P))}{\deg(\gamma_n^+)}}\qquad\qquad\text{($\bar{\nu}$-almost surely).}\vspace{.1cm}\]
Hence, $\Orb_S(P)$ is finite if and only if $\Orb_\gamma^+(P)$ is finite with positive $\bar{\nu}$-probability. \\[3pt]
\item[\textup{(3)}]  If $\Orb_S(P)$ is infinite and $\mathbb{E}_\nu[\log\deg(\phi)]$ exists, then \vspace{.2cm}  
\[\limsup_{n\rightarrow\infty} h(\gamma_n^+(P))^{1/n}=\delta_{S,\nu}\qquad\qquad\text{($\bar{\nu}$-almost surely).}\]
Moreover, the dynamical degree $\delta_{S,\nu}=\exp\big(\mathbb{E}_\nu[\log\deg(\phi)]\big)$ is given explicitly. \vspace{.025cm}  
\end{enumerate}    
\end{theorem}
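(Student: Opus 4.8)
The plan is to base everything on one estimate for composites. Starting from the bound $|h(\psi(R))-d_\psi h(R)|\le C_S$ valid for each $\psi\in S$ (cf.\ (\ref{htconstant})), if $g=\psi_1\circ\cdots\circ\psi_m$ is a composite of morphisms from $S$ with $D_g:=\deg(g)=\prod_{i=1}^m d_{\psi_i}$, then peeling off $\psi_1,\psi_2,\dots$ one at a time and tracking the error in $h(g(R))/D_g-h(R)$ produces a geometric series of ratio at most $1/d_S\le 1/2$, whence
\[\bigl|\,h(g(R))-D_g\,h(R)\,\bigr|<C_S\,D_g\qquad\text{for all }R\in\mathbb{P}^N(\overline{\mathbb{Q}}).\]
Specializing to $g=\gamma_n^+$ and $R=P$ gives $h(P)-C_S<h(\gamma_n^+(P))/\deg(\gamma_n^+)<h(P)+C_S$ for every $n\ge1$ and every $\gamma$, which is already stronger than statement (1).

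For statement (2) I would first reformulate finiteness: since all the points $\gamma_n^+(P)$, and more generally all $f(P)$ with $f\in M_S$, lie in the fixed number field generated by $K$ together with the coordinates of $P$, Northcott's theorem says that $\Orb_\gamma^+(P)$ (resp.\ $\Orb_S(P)$) is finite iff $\sup_n h(\gamma_n^+(P))<\infty$ (resp.\ $\sup_{f\in M_S}h(f(P))<\infty$). In particular, if $\Orb_S(P)$ is infinite there is some $f=\psi_1\circ\cdots\circ\psi_k\in M_S$, with $\deg(f)=:D_f$, such that $Q:=f(P)$ satisfies $h(Q)>C_S$. The heart of the matter is then a block-recurrence observation, and this is where right iteration is genuinely used. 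Assuming $\nu$ has full support, the events $E_j=\{(\theta_{jk+1},\dots,\theta_{(j+1)k})=(\psi_1,\dots,\psi_k)\}$, $j\ge1$, are independent with $\bar\nu(E_j)=\prod_i\nu(\psi_i)>0$, so by the second Borel–Cantelli lemma infinitely many of them occur for $\bar\nu$-almost every $\gamma$. Whenever $E_j$ occurs, putting $n=(j+1)k$ we have $\gamma_n^+=(\theta_1\circ\cdots\circ\theta_{n-k})\circ f$, and applying the composite estimate to the prefix $\theta_1\circ\cdots\circ\theta_{n-k}$ at the point $Q$ gives
\[\frac{h(\gamma_n^+(P))}{\deg(\gamma_n^+)}=\frac{h\bigl((\theta_1\circ\cdots\circ\theta_{n-k})(Q)\bigr)}{\bigl(\prod_{i=1}^{n-k}d_{\theta_i}\bigr)\,D_f}>\frac{h(Q)-C_S}{D_f}>0.\]
Hence $\limsup_n h(\gamma_n^+(P))/\deg(\gamma_n^+)\ge (h(Q)-C_S)/D_f>0$ almost surely.

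To finish (2): if instead $\Orb_S(P)$ is finite, then $h$ is bounded on $\Orb_\gamma^+(P)\subseteq\Orb_S(P)$ for every $\gamma$, and since $\deg(\gamma_n^+)\ge d_S^{\,n}\to\infty$ the ratio $h(\gamma_n^+(P))/\deg(\gamma_n^+)$ tends to $0$ for all $\gamma$; combining the two cases gives the stated equivalence. The last assertion of (2) then follows because $\Orb_\gamma^+(P)$ is finite exactly when $h(\gamma_n^+(P))$ stays bounded, and the block-recurrence argument shows that this event has positive $\bar\nu$-probability precisely when $\Orb_S(P)$ is finite. For statement (3) I would apply the strong law of large numbers to the i.i.d.\ variables $\log\deg(\theta_i)$ to get $\tfrac1n\log\deg(\gamma_n^+)\to\mathbb{E}_\nu[\log\deg(\phi)]$, hence $\deg(\gamma_n^+)^{1/n}\to\exp(\mathbb{E}_\nu[\log\deg(\phi)])$ almost surely; comparing with Theorem \ref{thm:rationalmaps}(1) identifies this limit with $\delta_{S,\nu}$. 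The bound in (1) gives $\limsup_n h(\gamma_n^+(P))^{1/n}\le\delta_{S,\nu}$ at once, while along the subsequence produced by the block-recurrence argument one has $h(\gamma_n^+(P))^{1/n}\ge\bigl((h(Q)-C_S)/D_f\bigr)^{1/n}\deg(\gamma_n^+)^{1/n}\to\delta_{S,\nu}$, supplying the matching lower bound.

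I expect the main obstacle to be precisely this lower bound feeding both (2) and (3): one must locate a point of height exceeding $C_S$ inside the total orbit and then arrange, via block recurrence in the random sequence, to re-apply the amplifying composite $f$ outermost infinitely often along a single right orbit — the phenomenon that fails for left iteration and the place where the full-support hypothesis on $\nu$ is essential. Everything else is routine bookkeeping with the composite height estimate, Northcott's theorem, Borel–Cantelli, and the strong law of large numbers.
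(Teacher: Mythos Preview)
Your proposal is correct and follows essentially the same approach as the paper: the composite estimate you derive is the paper's Lemma~\ref{lem:tate} (Tate telescoping, with the slightly sharper constant $C_S/(d_S-1)$ in place of your $C_S$), your block-recurrence argument via second Borel--Cantelli is exactly the paper's ``Infinite Monkey'' step, and your use of the strong law of large numbers for $\deg(\gamma_n^+)^{1/n}$ is the i.i.d.\ special case of the Birkhoff argument the paper invokes. The only cosmetic difference is that for the lower bound in (3) you reuse the explicit Borel--Cantelli subsequence, whereas the paper extracts a subsequence from the definition of the $\limsup$ in (1) together with its positivity from (2); both routes land in the same place.
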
 
\begin{remark}{\label{rem:nocanht}} Note that the $\liminf$ and $\limsup$ in statement (1) of Theorem \ref{thm:zero-one} can be distinct. See Example \ref{eg:left-right difference} above.   
\end{remark} 
Having obtained results for left and right orbits, we turn to height counting problems for total orbits. Intuitively, one expects that if the maps in $S$ are related in some way (for instance, if they commute with each other), then this should cut down the number of possible points in total orbits. More formally, the asymptotic growth rate of the set 
\[\{Q\in\Orb_S(P)\,:\,h(Q)\leq B\}\]
should depend on the structure of the compositional monoid $M_S$ that $S$ generates, at least for generic initial points $P$. As an illustration, we have the following related asymptotic,  
\[\lim_{B\rightarrow\infty}\frac{\#\Big\{f\in M_S\,:\,h\big(f(P)\big)\leq B\Big\}}{(\log B)^s}=\frac{1}{s!\cdot\prod_{i=1}^s\log\deg(\phi_i)}, \vspace{.25cm}\]  
when $S$ is a free basis (of cardinality $s$) for the commutative monoid $M_S$ and $P$ has sufficiently large height. For justification of this fact, as well as a discussion of the problem of counting points of bounded height in total orbits more generally, see Section \ref{sec:totalorbits}. In particular, we discuss how this problem in dynamics relates to the (weighted) growth rate problem for semigroups and to restricted weighted compositions in combinatorics \cite{growth1, compositions, growth2, growth3}.\\[3pt] 

\textbf{Acknowledgements:} We are happy to thank Andrew Bridy, James Douthitt, Joseph Gunther, Vivian Olsiewski Healey, Trevor Hyde, Rafe Jones, and Joseph Silverman for discussions related to the work in this paper. 
\section{Notation and tools from probability}\label{sec:notation} 
We begin by fixing some notation. For more information on these standard constructions in probability, see \cite{Durrett, ProbabilityText}.
\begin{align*} 
S \;\;\;& \text{a set of dominant rational self maps on $\mathbb{P}^N$, all defined over $\overline{\mathbb{Q}}$}.\\ 
\nu \;\;\;& \text{a probability measure on $S$}.\\ 
\Phi_S \;\;& \text{the infinite product $\Phi_S=\Pi_{i=1}^\infty S=S^{\mathbb{N}}$}.\\[2pt]
\bar{\nu} \;\;\;& \text{the product measure $\bar{\nu}=\Pi_{i=1}^\infty \nu$ on $\Phi_S$}. \\
\gamma\;\;\;& \text{an element of $\Phi_S$, viewed as an infinite sequence.}\\ 
\mathbb{E}_{\bar\nu}[f]\;\,& \text{the expected value $\mathlarger{\smallint}_{\hspace{-.1cm}\mathsmaller{\Phi_s}} f\,d\bar{\nu}$ of a random variable $f:\Phi_S\rightarrow\mathbb{R}$}
\end{align*} 
\begin{remark} It is likely that many of our results on dynamical degrees hold without assumptions on the field of definition of the maps in $S$. However, since we wish to study heights, we assume that every map in $S$ has $\overline{\mathbb{Q}}$-coefficients. In particular, the sets $S$ we consider are countable, and for this reason, we assume that $\nu$ is a discrete measure with $\nu(\phi)>0$ for all $\phi\in S$. Likewise, since there may be no natural choice of probability measure $\nu$ on $S$, we keep the measures $\nu$ and $\bar{\nu}$ in much of the notation (e.g., $\mathbb{E}_{\bar\nu}[f]$) to remind the reader of the dependence of our formulas and bounds on the choice of $\nu$. 
\end{remark} 
When $S=\{\phi\}$ is a single map, a crucial tool for establishing the convergence of the limit defining the dynamical degree is Fekete's lemma (see the proof of \cite[Proposition 7]{SilvermanPN}), which states that if $a_n$ is a subadditive sequence of non-negative real numbers, then $\lim a_n/n$ exists. In particular, the following landmark theorem due to Kingman \cite{kingman} may be viewed as a random version of Fekete's lemma. In what follows, the expected value $\mathbb{E}_{\mu}[f]$ of a random variable $f: \Omega\rightarrow \mathbb{R}$ on a probability space $(\Omega,\Sigma, \mu)$ is the integral $\int_\Omega f d\mu$.      
\begin{theorem}[Kingman's Subadditive Ergodic Theorem]\label{thm:kingman} Let $T$ be a measure preserving transformation on a probability space $(\Omega,\Sigma, \mu)$, and let $(g_n)_{n\geq1}$ be a sequence of $L^1$ random variables that satisfy the subadditivity relation  
\begin{equation}\label{subadd} 
g_{m+n}\leq g_n+g_m\circ T^n   
\end{equation} 
for all $n,m\geq1$. Then there exists a $T$-invariant function $g$ such that  
\[\lim_{n\rightarrow\infty}\frac{g_n(x)}{n}=g(x)\]  
for $\mu$-almost every $x\in\Omega$. Moreover, if $T$ is ergodic, then $g$ is constant and \vspace{.1cm} 
\[\lim_{n\rightarrow\infty}\frac{g_n(x)}{n}=\lim_{n\rightarrow\infty}\frac{\mathbb{E}_\mu[g_n]}{n} =\inf_{n\geq1}\frac{\mathbb{E}_\mu[g_n]}{n}.\]    
for $\mu$-almost every $x\in\Omega$
\end{theorem}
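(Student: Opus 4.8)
The plan is to prove the theorem first under the extra hypothesis that $T$ is ergodic, in which case the limit function must be an almost-everywhere constant, and then to deduce the general statement via the ergodic decomposition of $\mu$. Set $a_n:=\mathbb{E}_\mu[g_n]$. By (\ref{subadd}) and the $T$-invariance of $\mu$, $(a_n)_{n\ge1}$ is a subadditive sequence of real numbers, so Fekete's lemma yields $\gamma:=\lim_n a_n/n=\inf_n a_n/n\in[-\infty,\infty)$. Writing $\bar g(x):=\limsup_n g_n(x)/n$ and $\underline g(x):=\liminf_n g_n(x)/n$, the relation (\ref{subadd}) gives $g_{n+1}(x)\le g_1(x)+g_n(Tx)$; dividing by $n$ and letting $n\to\infty$ shows $\bar g\le\bar g\circ T$ and $\underline g\le\underline g\circ T$, and since $T$ preserves $\mu$ a routine level-set argument promotes both to equalities almost everywhere. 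When $T$ is ergodic, $\bar g$ and $\underline g$ are thus almost-everywhere constants $\bar c\ge\underline c$, and because $\underline c\le\bar c$ trivially it suffices to prove the two bounds $\bar c\le\gamma$ and $\gamma\le\underline c$.

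For the upper bound, fix $M\ge1$, write $n=qM+r$ with $0\le r<M$, and iterate (\ref{subadd}) to get
\[g_n(x)\;\le\;\sum_{j=0}^{q-1}g_M(T^{jM}x)\;+\;g_r(T^{qM}x),\]
the final term absent when $r=0$. Dividing by $n$: the sum is $(q/n)$ times a Birkhoff average of the $L^1$ function $g_M$ under the measure-preserving map $T^M$, which by Birkhoff's ergodic theorem converges almost everywhere to $(1/M)\psi_M(x)$ for some $\psi_M\in L^1$ with $\int\psi_M\,d\mu=a_M$; and $g_r(T^{qM}x)/n\to0$ almost everywhere (a standard fact: $h(T^kx)/k\to0$ almost everywhere for every $h\in L^1$, applied to $h=\max_{1\le r<M}|g_r|$). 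Hence $\bar g\le\psi_M/M$ almost everywhere, and integrating (using $\bar g\equiv\bar c$) gives $\bar c\le a_M/M$; the infimum over $M$ gives $\bar c\le\gamma$.

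The bound $\gamma\le\underline c$ is the crux, and I would obtain it by the greedy block-decomposition argument of Katznelson and Weiss. Fix $\epsilon>0$; for almost every $x$ the set $\{k\ge1:g_k(x)\le(\underline c+\epsilon)k\}$ is nonempty, so $\tau(x):=\min\{k\ge1:g_k(x)\le(\underline c+\epsilon)k\}$ is finite and measurable, and one may choose $L$ so large that, with $B:=\{\tau>L\}$, both $\mu(B)$ and $\mathbb{E}_\mu[|g_1|\,\mathbf{1}_B]$ are less than $\epsilon$. Given an orbit segment $x,Tx,\dots,T^{n-1}x$, partition $\{0,\dots,n-1\}$ from the left: at a position $i$ with $T^ix\notin B$ and at least $L$ steps remaining, take a block of length $\tau(T^ix)\le L$, whose contribution is at most $(\underline c+\epsilon)\tau(T^ix)$; at every other position advance one step and use the crude bound $g_1$. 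Iterating (\ref{subadd}) to glue the blocks yields
\[g_n(x)\;\le\;(\underline c+\epsilon)\,n\;+\;\sum_{i=0}^{n-1}|g_1|(T^ix)\,\mathbf{1}_B(T^ix)\;+\;E_n(x),\]
where $E_n$ accounts for the at most $L$ one-step blocks in the tail. Applying $\mathbb{E}_\mu$ (the middle term has expectation $n\,\mathbb{E}_\mu[|g_1|\mathbf{1}_B]$ by invariance, and $\mathbb{E}_\mu[E_n]/n\to0$ since $E_n$ has at most $L$ summands), dividing by $n$, and letting $n\to\infty$ gives $\gamma\le\underline c+2\epsilon$; letting $\epsilon\to0$ finishes. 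I expect the real obstacle to lie precisely here: ensuring the decomposition is measurable in $x$ and that the tail and bad-block contributions are controlled uniformly in $n$. A secondary point is that $g_n$ need not admit an integrable lower bound and $\gamma$ may be $-\infty$, which I would handle by first running everything for the truncated sequence $\max(g_n,-Kn)$ (truncation preserves (\ref{subadd})) and then letting $K\to\infty$.

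Finally, for general $T$ I would invoke the ergodic decomposition $\mu=\int\mu_\omega\,d\rho(\omega)$: as $g_n\in L^1(\mu)$, Fubini gives $g_n\in L^1(\mu_\omega)$ for $\rho$-almost every $\omega$, so the ergodic case applies to each $\mu_\omega$ and yields $g_n/n\to c(\omega):=\inf_n\mathbb{E}_{\mu_\omega}[g_n]/n$ for $\mu_\omega$-almost every $x$; then $g(x):=c(\omega(x))$ is $T$-invariant and $g_n/n\to g$ $\mu$-almost everywhere. The ``moreover'' clause is just Fekete's lemma for $(a_n)$ together with the constancy of $g$ in the ergodic case. (Alternatively one can avoid the decomposition by carrying the $\sigma$-algebra of $T$-invariant sets through the two bounds above, with conditional expectations in place of $\mathbb{E}_\mu[\,\cdot\,]$.)
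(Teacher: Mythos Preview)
The paper does not prove this statement: Theorem~\ref{thm:kingman} is quoted as a background tool in Section~\ref{sec:notation}, with attribution to Kingman's original paper, and is then applied (together with Birkhoff's theorem and the ergodicity of the shift) in the proof of Theorem~\ref{thm:rationalmaps}. So there is no ``paper's own proof'' to compare your proposal against.

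For what it is worth, your outline is the standard modern route to Kingman's theorem (upper bound via Birkhoff on $M$-blocks, lower bound via the Katznelson--Weiss greedy block decomposition, and either ergodic decomposition or conditioning on the invariant $\sigma$-algebra for the non-ergodic case), and the technical caveats you flag---measurability of the block decomposition, control of the tail, and truncation $\max(g_n,-Kn)$ to handle $\gamma=-\infty$---are exactly the right ones. One small point: your invariance argument gives only $\bar g\le\bar g\circ T$ and $\underline g\le\underline g\circ T$ directly; promoting these to a.e.\ equalities uses that a measurable function dominated a.e.\ by its own pushforward under a measure-preserving map must equal it a.e., which is fine but worth stating explicitly. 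None of this, however, is what the paper is doing: it simply invokes the theorem.
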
 
\begin{remark} A transformation $T:\Omega\rightarrow\Omega$ on a probability space $(\Omega,\Sigma,\mu)$ is called \emph{ergodic} if for all $E\in\Sigma$ such that $T^{-1}(E)=E$, either $\mu(E)=0$ or $\mu(E)=1$.   
\end{remark} 
We also need a similar (yet weaker) ergodic theorem due to Birkhoff. 
\begin{theorem}[Birkhoff's Ergodic Theorem]\label{birk} If $T$ is an ergodic, measure preserving transformation on a probability space $(\Omega,\Sigma, \mu)$, then for every random variable $f\in L^1(\Omega)$,
\begin{equation}\label{birkhoff} \lim_{n\rightarrow\infty} \frac{1}{n}\sum_{j=0}^{n-1} f\circ T^j(x)=\mathbb{E}_\mu[f].  
\end{equation}    
for $\mu$-almost every $x\in\Omega$. 
\end{theorem}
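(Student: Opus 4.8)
Since Kingman's subadditive ergodic theorem (Theorem~\ref{thm:kingman}) is already available, the fastest approach is to observe that Birkhoff's theorem is simply its additive special case. Writing $S_nf=\sum_{j=0}^{n-1}f\circ T^j$, one applies Theorem~\ref{thm:kingman} to the sequence $g_n:=S_nf$: these lie in $L^1$ (since $\int_\Omega|S_nf|\,d\mu\le n\int_\Omega|f|\,d\mu$, using that $T$ preserves $\mu$) and satisfy the subadditivity relation with equality, $S_{m+n}f=S_nf+S_mf\circ T^n$. Hence there is a $T$-invariant $g$ with $\tfrac1n S_nf(x)\to g(x)$ for $\mu$-a.e.\ $x$, and by ergodicity $g$ is the constant $\lim_n\tfrac1n\mathbb E_\mu[S_nf]=\lim_n\tfrac1n\sum_{j=0}^{n-1}\mathbb E_\mu[f\circ T^j]=\mathbb E_\mu[f]$ (again since $T$ preserves $\mu$)---which is the assertion. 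On this route the only thing to do is verify the (immediate) hypotheses of Kingman's theorem, so there is no real obstacle; the remainder describes the self-contained route one would take if Kingman's theorem were not presupposed.

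That route runs through the \emph{Maximal Ergodic Theorem}, in the style of Hopf and Garsia. Set $A_nf=\tfrac1n S_nf$, $\bar f=\limsup_n A_nf$, $\underline f=\liminf_n A_nf$; from $A_nf\circ T=\tfrac{n+1}{n}A_{n+1}f-\tfrac1n f$ (and $f$ finite $\mu$-a.e.) both $\bar f$ and $\underline f$ are $T$-invariant, so once $\bar f=\underline f$ $\mu$-a.e., this common value is $T$-invariant, hence $\mu$-a.e.\ constant by ergodicity, and it only remains to identify that constant with $\mathbb E_\mu[f]$. The maximal ergodic theorem asserts: for $g\in L^1$, $\int_{\{\sup_{n\ge1}S_ng>0\}}g\,d\mu\ge0$. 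To prove it, put $F_Ng=\max_{0\le n\le N}S_ng$ with $S_0g:=0$, so $F_Ng\ge0$; on $\{F_Ng>0\}$ one has $S_ng=g+(S_{n-1}g)\circ T\le g+(F_Ng)\circ T$ for $1\le n\le N$, and there the maximum defining $F_Ng$ is attained at some $n\ge1$, so $F_Ng\le g+(F_Ng)\circ T$ on that set; integrating and using $(F_Ng)\circ T\ge0$ together with $\int_\Omega(F_Ng)\circ T\,d\mu=\int_\Omega F_Ng\,d\mu$ gives $\int_{\{F_Ng>0\}}g\,d\mu\ge0$, and letting $N\to\infty$ (dominated convergence, $|g|$ as dominator) completes the proof.

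With the maximal ergodic theorem in hand, fix rationals $\beta<\alpha$ and put $E=\{\underline f<\beta\}\cap\{\bar f>\alpha\}$; it is $T$-invariant, so $S_n\big((f-\alpha)\mathbf{1}_E\big)=\mathbf{1}_E\,S_n(f-\alpha)$ and $\{\sup_{n\ge1}S_n((f-\alpha)\mathbf{1}_E)>0\}=E$ (on $E$, $\limsup_n A_n(f-\alpha)=\bar f-\alpha>0$). Applying the theorem to $(f-\alpha)\mathbf{1}_E$ and to $(\beta-f)\mathbf{1}_E$ yields $\alpha\,\mu(E)\le\int_Ef\,d\mu\le\beta\,\mu(E)$, which forces $\mu(E)=0$; a union over all rational pairs $\beta<\alpha$ gives $\mu(\{\underline f<\bar f\})=0$, so $A_nf\to\phi:=\bar f=\underline f$ $\mu$-a.e. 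Finally $\int_\Omega A_nf\,d\mu=\int_\Omega f\,d\mu$ for every $n$, and one passes to the limit by a standard truncation argument---for $f\ge0$, Fatou gives $\int_\Omega\phi\,d\mu\le\int_\Omega f\,d\mu$, and with $f_M=\min(f,M)$ dominated convergence followed by monotone convergence in $M$ gives the reverse; then split $f=f^+-f^-$---so $\int_\Omega\phi\,d\mu=\int_\Omega f\,d\mu$, and ergodicity gives $\phi\equiv\mathbb E_\mu[f]$.

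On the self-contained route the main obstacle is the maximal ergodic theorem: it carries essentially all the content, and the one genuinely clever move is the pointwise telescoping $S_ng\le g+(F_Ng)\circ T$ on $\{F_Ng>0\}$ together with the observation that extending the integral of the nonnegative function $(F_Ng)\circ T$ from $\{F_Ng>0\}$ to $\Omega$ only increases it. A byproduct (apply the theorem to $|f|$) is the weak-type maximal inequality $\lambda\,\mu\big(\{\sup_n A_n|f|>\lambda\}\big)\le\int_\Omega|f|\,d\mu$. Alternatively one can bypass the maximal ergodic theorem entirely via the Katznelson--Weiss stopping-time and tiling argument, at the cost of more bookkeeping over a ``bad set'' where the relevant first-passage time is large.
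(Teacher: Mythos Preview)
The paper does not prove Birkhoff's Ergodic Theorem; it is stated in Section~\ref{sec:notation} as a classical tool from probability (alongside Kingman's theorem and the Law of Iterated Logarithms), with no proof given. So there is no ``paper's own proof'' to compare against.

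That said, your proposal is correct. Your first route---deducing Birkhoff as the additive special case of Kingman's Theorem~\ref{thm:kingman}---is entirely valid in the logical order of the paper, since Kingman's theorem is stated before Birkhoff's and you correctly verify the hypotheses: $S_nf\in L^1$ because $T$ preserves $\mu$, the subadditivity relation~(\ref{subadd}) holds with equality, and ergodicity forces the limit to be the constant $\lim_n\tfrac1n\mathbb{E}_\mu[S_nf]=\mathbb{E}_\mu[f]$. Your second, self-contained route via the Hopf--Garsia maximal ergodic theorem is also standard and the details you sketch (the telescoping inequality $S_ng\le g+(F_Ng)\circ T$ on $\{F_Ng>0\}$, the invariance of $\bar f,\underline f$, the rational-pair argument, and the truncation to identify the constant) are all sound.
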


To apply Kingman's Subadditive Ergodic Theorem to dynamical degrees, we use the following well known example of an ergodic, measure preserving transformation. In particular, the lemma below is a simple consequence of Kolmogorov's $0$\,-$1$ law \cite[Theorem 10.6]{ProbabilityText}; for nice further discussions, see \cite[Example 7.1.6]{Durrett} or \cite[Example 5.5]{steve2} and \cite[Exercise 5.11]{steve2}.    
\begin{lemma}\label{shift} Let $S$ be a set with probability measure $\nu$ and let $(\Phi_S,\bar{\nu})$ be the corresponding infinite product space. Then the shift map, 
\[T\big(\theta_1,\theta_2, \dots \big)=(\theta_2, \theta_3,\dots)\] 
is an ergodic, measure preserving transformation on $\Phi_S$.   
\end{lemma}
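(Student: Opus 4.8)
The plan is to prove Lemma \ref{shift} by exhibiting the shift map $T$ as a measure-preserving transformation and then using Kolmogorov's $0$-$1$ law to deduce ergodicity. Measure preservation is essentially formal: the product $\sigma$-algebra on $\Phi_S=S^{\mathbb{N}}$ is generated by the cylinder sets $C=\{\gamma:\theta_1\in A_1,\dots,\theta_k\in A_k\}$, and $T^{-1}(C)=\{\gamma:\theta_2\in A_1,\dots,\theta_{k+1}\in A_k\}$, so by the product structure of $\bar\nu=\prod_{i=1}^\infty\nu$ we get $\bar\nu(T^{-1}C)=\prod_{i=1}^k\nu(A_i)=\bar\nu(C)$. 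Since the cylinders form a $\pi$-system generating the $\sigma$-algebra and $T^{-1}$ commutes with countable unions and complements, Dynkin's $\pi$-$\lambda$ theorem (or the uniqueness of extension of measures) gives $\bar\nu(T^{-1}E)=\bar\nu(E)$ for every measurable $E$. I would state this as the first step.

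Next I would turn to ergodicity. Suppose $E\in\Sigma$ satisfies $T^{-1}(E)=E$. The key observation is that a shift-invariant event lies in the tail $\sigma$-algebra $\mathcal{T}=\bigcap_{m\geq1}\sigma(\theta_m,\theta_{m+1},\dots)$. Indeed, iterating invariance gives $E=T^{-m}(E)$ for all $m$, and $T^{-m}(E)\in\sigma(\theta_{m+1},\theta_{m+2},\dots)$ because $T^{-m}$ only involves coordinates from index $m+1$ onward; hence $E\in\sigma(\theta_{m+1},\dots)$ for every $m$, so $E\in\mathcal{T}$. Here I should be slightly careful: this last inclusion uses that $E$ itself is measurable with respect to the full product $\sigma$-algebra, and that $T^{-m}(F)\in\sigma(\theta_{m+1},\dots)$ whenever $F$ is measurable, which again follows by the $\pi$-system argument applied to cylinders. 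Since the coordinate maps $\gamma\mapsto\theta_i$ are independent under the product measure $\bar\nu$, Kolmogorov's $0$-$1$ law \cite[Theorem 10.6]{ProbabilityText} says every tail event has $\bar\nu$-measure $0$ or $1$; in particular $\bar\nu(E)\in\{0,1\}$, which is exactly ergodicity of $T$.

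The only genuine subtlety — and the step I would be most careful with — is the measurability bookkeeping showing $T^{-m}(E)\in\sigma(\theta_{m+1},\theta_{m+2},\dots)$ for arbitrary measurable $E$, since $E$ need not be a cylinder. The clean way to handle this is to note that $T^m:\Phi_S\to\Phi_S$ factors through the projection $\pi_{>m}:\Phi_S\to S^{\{m+1,m+2,\dots\}}$ followed by a measurable identification, so $T^{-m}$ of any measurable set is measurable with respect to $\sigma(\pi_{>m})=\sigma(\theta_{m+1},\theta_{m+2},\dots)$; one verifies this on cylinders and extends by the standard good-sets argument. I would record this as a one-line remark rather than belabor it. Given the references already cited in the excerpt (\cite[Example 7.1.6]{Durrett}, \cite[Example 5.5]{steve2}), it is also reasonable simply to cite the result, so I would present the full argument only as briefly as needed and point to the literature for the routine verifications.
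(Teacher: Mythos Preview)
Your argument is correct and is precisely the approach the paper indicates: it does not write out a proof but simply notes that the lemma is a simple consequence of Kolmogorov's $0$--$1$ law (with references to \cite[Theorem 10.6]{ProbabilityText}, \cite[Example 7.1.6]{Durrett}, and \cite{steve2}), which is exactly the route you take after verifying measure preservation on cylinders.
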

\begin{remark} When $S$ is a finite set, the probability space $\Phi_S$ and the map $T$ as in Lemma \ref{shift} are often called Bernoulli schemes and Bernoulli shifts respectively.  
\end{remark} 
Finally, to obtain the improved height bounds in part (3) of Theorem \ref{thm:iteratedlogs} with a main term of $\delta^n$, we use the following result due to Hartman and Wintner known as the Law of Iterated Logarithms; see  \cite[Theorem 8.11.3]{Durrett}. As with certain classical theorems in probability (e.g., the Law of Large Numbers, The Central Limit Theorem, etc.) the Law of Iterated Logarithms for simple random walks is normally stated in terms of independent and identically distributed (or \emph{i.i.d.} for short) random variables; see \cite[\S2.1]{Durrett} or \cite[\S10]{ProbabilityText} for a definition and discussing of i.i.d sequences. However, for our purposes, it suffices to know that if $f:S\rightarrow\mathbb{R}$ is any $\nu$-measurable function, then the corresponding projection maps $X_n(f):\Phi_S\rightarrow\mathbb{R}$ on the product space $(\Phi_S,\bar{\nu})$ given by $X_{n,f}(\theta_1,\theta_2, \dots)=f(\theta_n)$ form an i.i.d sequence of random variables; this is a simple consequence of the relevant definitions \cite[Corollary 10.2]{ProbabilityText}.     
\begin{theorem}[Law of Iterated Logarithms]\label{thm:lawiterlogs} Suppose that $X_1$, $X_2$, $\dots$ are i.i.d. random variables on $(\Omega,\Sigma, \mu)$ with $\mathbb{E}_\mu[X_i]=0$ and $\mathbb{E}_\mu[X_i^2]=1$. Then, if $S_n=X_1+\dots+X_n$ denotes the truncated sum, we have that   
\begin{equation}\label{brownian}
\qquad\limsup_{n\rightarrow\infty} \frac{\pm S_n}{\sqrt{2n\log\log n}}=1\qquad\text{($\mu$-almost surely).}
\end{equation}  
\end{theorem}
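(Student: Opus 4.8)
The plan is to deduce this from the law of the iterated logarithm for Brownian motion via Skorokhod embedding, following the classical route (see \cite[\S8.11]{Durrett}); an alternative is Kolmogorov's exponential-inequality proof for bounded summands combined with a truncation argument, but the Brownian route is cleaner to outline. Since the event $\{\limsup_{n}\pm S_n/\sqrt{2n\log\log n}=1\}$ lies in $\sigma(S_1,S_2,\dots)=\sigma(X_1,X_2,\dots)$, its $\mu$-probability depends only on the law of $(X_i)_{i\ge1}$, so I may work on any convenient space, in particular one carrying a standard one-dimensional Brownian motion $(B_t)_{t\ge0}$. The argument then has three parts: (i) prove the law of the iterated logarithm for $B$; (ii) embed $(S_n)$ into $B$ along a sequence of stopping times; (iii) transfer the conclusion from $B_{\tau_n}$ back to $S_n$.

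For (i), the upper bound $\limsup_t B_t/\sqrt{2t\log\log t}\le 1$ a.s.\ comes from the Gaussian tail estimate $\mu(B_t\ge\lambda)\le e^{-\lambda^2/2t}$ together with the reflection-principle maximal inequality $\mu(\sup_{s\le t}B_s\ge\lambda)\le 2e^{-\lambda^2/2t}$: evaluating at $t=t_k:=\theta^k$ with threshold $(1+\varepsilon)\sqrt{2t_k\log\log t_k}$, Borel--Cantelli controls $B$ at the points $t_k$, the maximal inequality controls the oscillation of $B$ on each block $[t_k,t_{k+1}]$, and then one lets $\theta\downarrow 1$ and $\varepsilon\downarrow 0$. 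For the lower bound one uses the independence of the increments $B_{t_{k+1}}-B_{t_k}$ along a rapidly growing geometric subsequence, the Gaussian lower tail $\mu(B_t\ge\lambda)\gtrsim(\sqrt t/\lambda)e^{-\lambda^2/2t}$, and the second Borel--Cantelli lemma to force $B_{t_{k+1}}-B_{t_k}\ge(1-\varepsilon)\sqrt{2t_{k+1}\log\log t_{k+1}}$ infinitely often; subtracting the already-established upper bound applied to $-B_{t_k}$ then yields $\limsup\ge 1-\varepsilon$ for every $\varepsilon>0$. Symmetry of $B$ handles $-B$, and restricting to integer times changes nothing by the same oscillation estimates.

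For (ii), Skorokhod's embedding gives a stopping time $\tau$ with $B_\tau$ distributed as $X_1$ and $\mathbb{E}_\mu[\tau]=\mathbb{E}_\mu[X_1^2]=1$; iterating via the strong Markov property produces stopping times $0=\tau_0\le\tau_1\le\cdots$ with i.i.d.\ increments distributed as $\tau$ and with $(B_{\tau_n})_{n\ge1}$ equal in law, as a process, to $(S_n)_{n\ge1}$, and the strong law of large numbers gives $\tau_n/n\to 1$ $\mu$-a.s. For (iii) it then suffices to prove $\limsup_n\pm B_{\tau_n}/\sqrt{2n\log\log n}=1$ a.s. Fix $\varepsilon>0$; by the strong law $|\tau_n-n|\le\varepsilon n$ for all large $n$, so $|B_{\tau_n}-B_n|\le\omega_n(\varepsilon):=\sup_{|s-n|\le\varepsilon n}|B_s-B_n|$ eventually. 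Applying the reflection maximal inequality on intervals of length $\varepsilon n$ along a geometric subsequence (plus a routine interpolation across blocks) gives $\limsup_n\omega_n(\varepsilon)/\sqrt{2n\log\log n}\le c(\varepsilon)$ a.s.\ with $c(\varepsilon)=O(\sqrt\varepsilon)\to 0$; since also $\sqrt{2\tau_n\log\log\tau_n}/\sqrt{2n\log\log n}\to 1$, combining with part (i) yields
\[\Bigl|\limsup_{n\to\infty}\frac{B_{\tau_n}}{\sqrt{2n\log\log n}}-1\Bigr|\le c(\varepsilon)\quad\text{and}\quad\Bigl|\limsup_{n\to\infty}\frac{-B_{\tau_n}}{\sqrt{2n\log\log n}}-1\Bigr|\le c(\varepsilon),\]
and letting $\varepsilon\downarrow 0$ finishes the proof.

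The main obstacle is precisely this transfer step. The bound $\tau_n/n\to 1$ by itself only gives $|\tau_n-n|=o(n)$, and feeding that into the modulus of continuity of $B$ produces an error term of size $o(\sqrt{n\log n})$, which dominates the target normalization $\sqrt{n\log\log n}$ and is therefore useless. The remedy is to keep $\varepsilon$ \emph{fixed}, extract from the maximal inequality the quantitative oscillation bound $c(\varepsilon)=O(\sqrt\varepsilon)$, and only afterward send $\varepsilon\to 0$; making the interpolation across geometric blocks clean, so that the oscillation estimate holds for all $n$ rather than merely along the subsequence, is the one genuinely fiddly point in the whole argument.
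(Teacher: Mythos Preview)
The paper does not give its own proof of this theorem: it is quoted as the classical Hartman--Wintner result with a reference to \cite[Theorem 8.11.3]{Durrett}, and the accompanying remark simply notes that the proof proceeds by first establishing the law of the iterated logarithm for Brownian motion and then deducing the discrete-time statement from that case. Your outline follows precisely this Brownian-motion-then-Skorokhod-embedding route, so it agrees with the approach the paper points to.
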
 
\begin{remark} Interestingly, the Law of Iterated Logarithms (for simple random walks) stated above is proven by first establishing the analogous fact for Brownian motion and then deducing (\ref{brownian}) from that case.  
\end{remark} 
\section{Rational maps: dynamical degrees and height bounds}
In this section, we prove Theorem \ref{thm:rationalmaps} on dynamical degrees and height bounds for rational maps; for strengthened results on morphisms, see Section \ref{sec:morphisms}.  
\begin{proof}[(Proof of Theorem \ref{thm:rationalmaps})] We begin with the proof of statement (1) on dynamical degrees. For $n\geq1$, we define the random variables $g_n^{-}:\Phi_S\rightarrow\mathbb{R}_{\geq0}$ and $g_n^{+}:\Phi_S\rightarrow\mathbb{R}_{\geq0}$ given by 
\[g_n^{-}(\gamma):=\log\deg(\gamma_n^{-})\;\;\text{and}\;\;g_n^{+}(\gamma):=\log\deg(\gamma_n^{+})\] 
respectively. Note that each $g_n^{\pm}$ is non-negative since $S$ is a collection of dominant maps. We will show that the sequences $(g_n^-)_{n\geq1}$ and $(g_n^+)_{n\geq1}$ satisfy the hypothesis of Kingman's Subadditive Ergodic Theorem. Note first that each $g_n^{\pm}$ factors through the finite product $S^n$ and $S^n$ (a countable set) is equipped with the discrete measure (a finite product of discrete spaces is discrete). In particular, $g_n^{\pm}$ is $\bar{\nu}$-measurable by \cite[Corollary 10.2]{ProbabilityText}. Likewise, define $f_i:\Phi_S\rightarrow\mathbb{R}_{\geq0}$ given by $f_i(\gamma)=\log\deg(\theta_i)$ for $\gamma=(\theta_s)_{s=1}^\infty$. Then $f_i$ is also measurable by \cite[Corollary 10.2]{ProbabilityText}. Moreover, we see that $g_n^{\pm}\leq\sum_{i=1}^nf_i$, since 
\begin{equation}\label{degbd} 
\deg(F\circ G)\leq\deg(F)\deg(G)\;\;\;\;\text{for any}\; F,G\in \Dom(\mathbb{P}^N);
\end{equation} 
here, $\Dom(\mathbb{P}^N)$ is the set of dominant self-maps on $\mathbb{P}^N$. In particular,   
\[\mathbb{E}_{\bar{\nu}}[g_n^{\pm}]\leq\sum_{i=1}^n\mathbb{E}_{\bar{\nu}}[f_i]=n\,\mathbb{E}[f_1]:=n\,\mathbb{E}_\nu[\log\deg(\phi)];\] 
here we use that $\Phi_S$ consists of i.i.d sequences. In particular, each $g_n^{\pm}$ is an $L^1$ function since $\mathbb{E}_\nu[\log\deg(\phi)]$ is bounded by assumption. Now we check the subadditivity relation in (\ref{subadd}), a simple consequence of (\ref{degbd}). Let $n,m>0$, let $\gamma=(\theta_s)_{s=1}^\infty$, and let $T$ be the shift map on $\Phi_S$. Then we compute that \vspace{.25cm}  
\begin{equation*}
\begin{split} 
g_{n+m}^{\,-}(\gamma)=\log\deg(\theta_{m+n}\circ\dots\circ\theta_1)&\leq\log\deg(\theta_{m+n}\circ\dots\circ\theta_{n+1})+\log\deg(\theta_n\circ\dots\circ\theta_1)\\[3pt] 
&=g_m^-(T^n(\gamma))+g_n^-(\gamma)=g_n^-(\gamma)+g_m^-(T^n(\gamma)), \vspace{.25cm}
\end{split} 
\end{equation*}    
by (\ref{degbd}). Likewise for right iteration, we see that \vspace{.1cm}  
\begin{equation*}
\begin{split} 
g_{n+m}^{\,+}(\gamma)=\log\deg(\theta_{1}\circ\dots\circ\theta_{n+m})&\leq\log\deg(\theta_{1}\circ\dots\circ\theta_{n})+\log\deg(\theta_{n+1}\circ\dots\circ\theta_{n+m})\\[3pt] 
&=g_n^+(\gamma)+g_m^+(T^n(\gamma)), \vspace{.25cm}
\end{split} 
\end{equation*} 
In particular, Theorem \ref{thm:kingman} and Lemma \ref{shift} together imply that \vspace{.2cm} 
\begin{equation}\label{kinglim} 
\lim_{n\rightarrow\infty}\log\deg(\gamma_n^{\pm})^{1/n}=\lim_{n\rightarrow\infty}\frac{g_n^{\pm}(\gamma)}{n}=\lim_{n\rightarrow\infty}\frac{\mathbb{E}_{\bar{\nu}}[g_n^{\pm}]}{n}=\inf_{n\geq1}\frac{\mathbb{E}_{\bar{\nu}}[g_n^{\pm}]}{n}
\end{equation} 
for $\bar{\nu}$-almost every $\gamma\in\Phi_S$. However, apriori the limits 
\[\delta_{S,\nu}^-:=\lim_{n\rightarrow\infty}\frac{\mathbb{E}_{\bar{\nu}}[g_n^{-}]}{n}\;\;\text{and}\;\; \delta_{S,\nu}^+:=\lim_{n\rightarrow\infty}\frac{\mathbb{E}_{\bar{\nu}}[g_n^{+}]}{n}\] 
could be distinct (in fact, if we were to allow maps over $\mathbb{C}$ so that $S$ could be uncountable, then we expect that this could be the case). But $S$ is countable and discrete by assumption, and so these limits are in fact equal. To see this, we define the bijections $\tau_n:S^n\rightarrow S^n$ given by 
\[\tau_n(\theta_1,\dots,\theta_n)=(\theta_n,\dots,\theta_1)\] 
and let $\nu_n=\nu\times\dots\times\nu$ be the product probability measure on $S^n$. Then it follows from the definition of $\nu_n$ that 
\[\nu_n(\theta_1,\dots,\theta_n)=\nu(\theta_1)\cdots\nu(\theta_n)=\nu(\theta_n)\cdots\nu(\theta_1)=\nu_n(\tau_n(\theta_1,\dots\theta_n))\]
see \cite[\S10]{ProbabilityText}. Now let $G_n^{\pm}$ be the random variables on $S^n$ given by \vspace{.1cm} 
\[G_n^-(\theta_1,\dots,\theta_n)=\log\deg(\theta_n\circ\dots\circ\theta_1)\;\;\;\text{and}\;\;\;G_n^+(\theta_1,\dots,\theta_n)=\log\deg(\theta_1\circ\dots\circ\theta_n)\vspace{.1cm}\] 
In particular, it is straightforward to check that $G_n^-=G_n^+\circ\tau_n$. Therefore, since $S^n$ is countable/discrete, $\tau_n$ is bijective, and the series below are absolutely convergent:\vspace{.1cm}   
\begin{equation}{\label{eq:directionswap}}
\mathbb{E}_{\nu_n}[G_n^{-}]=\sum_{x\in S^n}G_n^-(x)\nu_{n}(x)=\sum_{x\in S^n}G_n^+(\tau_n(x))\nu_{n}(\tau(x))=\sum_{y\in S^n}G_n^+(y)\nu_{n}(y)=\mathbb{E}_{\nu_n}[G_n^{+}].\vspace{.1cm} 
\end{equation}
On the other hand, $g_n^{\pm}$ factors through $G_n^{\pm}$, so that \cite[Theorem 10.4]{ProbabilityText} and (\ref{eq:directionswap}) together imply that \vspace{.1cm}
\begin{equation}\label{eq:swap2} 
\mathbb{E}_{\bar{\nu}}[g_n^{-}]=\mathbb{E}_{\nu_n}[G_n^{-}]=\mathbb{E}_{\nu_n}[G_n^{+}]=\mathbb{E}_{\bar{\nu}}[g_n^{+}]\qquad\text{for all $n\geq1$}. \vspace{.1cm}
\end{equation}  
Hence, it follows from (\ref{kinglim}) and (\ref{eq:swap2}) that  
\begin{equation}{\label{eq:swap3}}
\lim_{n\rightarrow\infty}\log\deg(\gamma_n^{-})^{1/n}=\lim_{n\rightarrow\infty}\frac{\mathbb{E}_{\bar{\nu}}[g_n^{-}]}{n}=\lim_{n\rightarrow\infty}\frac{\mathbb{E}_{\bar{\nu}}[g_n^{+}]}{n}=\lim_{n\rightarrow\infty}\log\deg(\gamma_n^{+})^{1/n}
\end{equation} 
for $\bar{\nu}$-almost every $\gamma\in\Phi_S$; here we use also that the intersection of almost sure events is almost sure. Moreover, applying the exponential map to (\ref{eq:swap3}) and exchanging $\exp$ with the limit (justified, by continuity) gives 
\begin{equation}\label{eq:dendegdef}
\lim_{n\rightarrow\infty}\deg(\gamma_n^{\pm})^{1/n}=\delta_{S,\nu}:=\exp\Big(\lim_{n\rightarrow\infty}\frac{\mathbb{E}_{\bar{\nu}}[g_n^{-}]}{n}\Big)=\exp\Big(\lim_{n\rightarrow\infty}\frac{\mathbb{E}_{\bar{\nu}}[g_n^{+}]}{n}\Big)
\end{equation} 
for $\bar{\nu}$-almost every $\gamma\in\Phi_S$ as claimed. 

Now for the proof of statement (2) of Theorem \ref{thm:rationalmaps}. Suppose that $S$ is finite and degree independent. Let $k\geq1$ be an integer, and let 
\begin{equation}\label{def:strings} 
M_{S,k}:=\big\{f=\theta_1\circ\dots\circ\theta_k\,\big\vert\;\text{for some}\;(\theta_1,\dots,\theta_k)\in S^k\big\}
\end{equation} 
be the set of possible functions generated by $k$-term strings of elements of $S$. Then a standard triangle inequality estimate (see the proof of \cite[Theorem 3.11]{SilvDyn}) implies that 
\begin{equation}\label{rat:bd1} 
\,h(f(Q))\leq\deg(f) \,h(Q)+C(k,S)\qquad \text{for all $f\in M_{S,k}$ and all $Q\in\mathbb{P}^N(\overline{\mathbb{Q}})_S$}.  
\end{equation}  
To see this, note that there is such a constant for each $f$ and only finitely many $f$'s, since $S$ is a finite set. Moreover, it is important to note that the estimate above does not depend on the direction of iteration (but simply the length of the string). In particular, we see that if $P\in \mathbb{P}^N(\overline{\mathbb{Q}})_S$, if $n\geq1$, and if  $F_{nk}=f_n\circ f_{n-1}\circ\dots\circ f_1$ is an arbitrary element of $M_{S,nk}$ for some choice of $f_i\in M_{S,k}$, then repeated application of the bound in (\ref{rat:bd1}) implies that \vspace{.25cm}  
\begin{equation}\label{eq:stringbd} 
\begin{split}
h(F_{nk}(P))\leq&\deg(f_n)\deg(f_{n-1})\dots\deg(f_1)\Scale[.84]{\Big(h(P)+\frac{C(k,S)}{\deg(f_1)}+\frac{C(k,S)}{\deg(f_1)\deg(f_2)}+\dots+\frac{C(k,S)}{\deg(f_1)\dots\deg(f_n)}\Big)} \\[5pt]
\leq&\deg(f_n)\deg(f_{n-1})\dots\deg(f_1) \Big(h(P)+C(k,S)\Big). \vspace{.15cm}  
\end{split} 
\end{equation}
Here we use our assumption that $S$ is degree independent, so that $\deg(f_i)\geq2$ for all $i$. Now we apply this bound to sequences. For $\gamma=(\theta_s)_{s=1}^{\infty}\in\Phi_S$ and $i,k\geq1$, let \vspace{.15cm}   
\[f_{i,k}^-(\gamma)=\theta_{ik}\circ\theta_{ik-1}\circ\dots\circ\theta_{(i-1)k+1}\;\;\;\text{and}\;\;\;f_{i,k}^+(\gamma)=\theta_{(i-1)k+1}\circ \theta_{(i-1)k+1}\dots\theta_{ik}. \vspace{.15cm}  \]    
In particular, it is straightforward to check that \vspace{.15cm}  
\[\gamma_{nk}^-=f_{n,k}^-(\gamma)\circ f_{n-1,k}^-(\gamma)\circ\dots \circ f_{1,k}^-(\gamma)\;\;\; \text{and}\;\;\;\gamma_{nk}^+=f_{1,k}^+(\gamma)\circ f_{2,k}^+(\gamma)\circ\dots\circ f_{n,k}^+(\gamma). \vspace{.15cm}  \] 
Moreover, each $f_{i,k}^{\pm}(\gamma)\in M_{S,k}$ is the composition of a $k$-term string from $S$. Therefore, (\ref{eq:stringbd}) above applied separately to $F_{nk}=\gamma_{nk}^-$ and $F_{nk}=\gamma_{nk}^+$ implies that \vspace{.15cm}  
\begin{equation}\label{rat:bd2} 
\begin{split} 
 h(\gamma_{nk}^-(P))\leq\deg(f_{1,k}^-(\gamma))\deg(f_{2,k}^-(\gamma))\dots\deg(f_{n,k}^-(\gamma)) \,C(k,S,P)\\[8pt]
 h(\gamma_{nk}^+(P))\leq\deg(f_{1,k}^+(\gamma))\deg(f_{2,k}^+(\gamma))\dots\deg(f_{n,k}^+(\gamma))\,C(k,S,P)
\end{split}  
\end{equation}
holds for all $n,k\geq1$, all $\gamma\in\Phi_{S}$ and all $P\in\mathbb{P}^N(\overline{\mathbb{Q}})_S$; here we reverse the order of the product of the degrees for left iteration,\vspace{.1cm}   
\[\deg(f_{n,k}^-(\gamma))\deg(f_{n-1,k}^-(\gamma))\dots\deg(f_{1,k}^-(\gamma))=\deg(f_{1,k}^-(\gamma))\deg(f_{2,k}^-(\gamma))\dots\deg(f_{n,k}^-(\gamma)),\vspace{.1cm}  \]
to streamline the argument to come. From here we use Birkhoff's Ergodic Theorem to control the right hand side of (\ref{rat:bd2}) above. Namely, let $T_{(k)}:\Phi_S\rightarrow\Phi_S$ denote the $k$-shift map, $T_{(k)}:=T^k=T\circ T\circ\dots\circ T$. In particular, since the shift map $T$ is ergodic and measure preserving by Lemma \ref{shift}, so is $T_{(k)}$ for all $k\geq1$. Now consider the random variables $F_{(k)}^{-}:\Phi_S\rightarrow\mathbb{R}_{\geq0}$ and $F_{(k)}^{+}:\Phi_S\rightarrow\mathbb{R}_{\geq0}$ given by \vspace{.1cm} 
\[ F_{(k)}^{\pm}(\gamma)=\frac{\log\deg(\gamma_k^{\pm})}{k}=\frac{\log\deg(f_{1,k}^{\pm}(\gamma))}{k}\;\;\;\;\ \text{for $\gamma\in\Phi_S$}.\]
Then, it follows from the definition of $f_{i,k}^\pm$ that $F_{(k)}^{\pm}\circ T_{(k)}^{i-1}=1/k\cdot\log\deg(f_{i,k}^{\pm})$. Hence, rewriting the bounds in (\ref{rat:bd2}) and taking $nk$-th roots, we see that \vspace{.1cm} 
\begin{equation}\label{rat:bd3}
h(\gamma_{nk}^{\pm}(P))^{1/nk}\leq \bigg(\exp\frac{1}{n}\sum_{j=0}^{n-1}F_{(k)}^{\pm}\big(T_{(k)}^j(\gamma)\big)\bigg)
\,C(k,S,P)^{1/nk}.\end{equation}   
In particular, (\ref{rat:bd3}) implies that 
\begin{equation}\label{rat:bd4}
\limsup_{n\rightarrow\infty}h(\gamma_{nk}^{\pm}(P))^{1/nk}\leq\limsup_{n\rightarrow\infty}\bigg(\exp\,\frac{1}{n}\sum_{i=0}^{n-1}F_{(k)}^{\pm}\big(T_{(k)}^i(\gamma)\big)\bigg).
\end{equation} 
However, Birkhoff's Ergodic Theorem \ref{birk} implies that 
\begin{equation}\label{rat:lim}
\lim_{n\rightarrow\infty}\frac{1}{n}\sum_{i=0}^{n-1}F_{(k)}^{\pm}\big(T_{(k)}^i(\gamma)\big)=\mathbb{E}_{\bar{\nu}}[F_{(k)}^{\pm}]
\end{equation} 
for almost every $\gamma\in\Phi_{S}$; note that this claim is independent of the point $P$.  Moreover, since a countable intersection of almost sure events is almost sure, we see that the limit in (\ref{rat:lim}) is \textbf{true for all k} (for both left and right iteration), for almost every $\gamma\in\Phi_S$. On the other hand, (\ref{eq:swap2}) above implies that  
\begin{equation}\label{eq:exp=}
\mathbb{E}_{\bar{\nu}}[F_{(k)}^{-}]=\frac{\mathbb{E}_{\bar{\nu}}[g_k^{-}]}{k}=\frac{\mathbb{E}_{\bar{\nu}}[g_k^{+}]}{k}=\mathbb{E}_{\bar{\nu}}[F_{(k)}^{+}].
\end{equation} 
Hence, the limit on the righthand side of (\ref{rat:lim}) does not depend on the direction. Therefore, (\ref{rat:bd4}), (\ref{rat:lim}), and the fact that the exponential function is continuous together imply that \vspace{.1cm} 
\begin{equation}\label{rat:bigbd}
\limsup_{n\rightarrow\infty}h(\gamma_{nk}^{\pm}(P))^{1/nk}\leq \exp\bigg(\mathbb{E}_{\bar{\nu}}\Big[\frac{\log\deg(\gamma_k^-)}{k}\Big]\bigg) \vspace{.1cm}  
\end{equation}
holds for all $k$ (for both left and right iteration), for almost every $\gamma\in\Phi_S$.

From here, we handle left and right iteration separately and begin with left iteration. In particular, we show that the overall limsup (without $k$) in part (2) of Theorem \ref{thm:rationalmaps} can be computed using the subsequence of multiples of $k$ (for any $k\geq1$). This line of reasoning does not work for right iteration in general; see Example \ref{eg:left-right difference}. To do this, define constants
\begin{equation}\label{rat:degbd} 
d_{S,k}:=\max_{\substack{f\in M_{S,r}\\ 0\leq r<k}}\deg(f)\;\;\; \text{and}\;\;\;\ B_{S,k}:=\max_{0\leq r<k} C(r,S);
\end{equation} 
here, we remind the reader that $C(r,S)$ is the height bound constant given by \vspace{.1cm}
\begin{equation}\label{rat:degbd2}
C(r,S)=\max_{f\in M_{S,r}}\sup_{Q\in\mathbb{P}^N(\overline{\mathbb{Q}})}\{h(f(Q))-\deg(f)h(Q)\}. \vspace{.1cm}
\end{equation} 
In particular, both $d_{S,k}$ and $B_{S,k}$ are finite since $S$ is a finite set. From here we proceed as in the proof of \cite[Proposition 12]{SilvermanPN}. Namely, for any $k\geq1$ and $m\geq k$, we can write $\gamma_m^-=f\circ\gamma_{nk}^-$ for some $f\in M_{S,r}$, some $0\leq r< k$, and some $n\geq1$. With this in mind,  \vspace{.15cm}   
\begin{equation}\label{rat:subseq}
\begin{split}
\limsup_{m\rightarrow\infty} h(\gamma_m^{-}(P))^{1/m}&=\limsup_{n\rightarrow\infty} \max_{0\leq r<k} h(\gamma_{r+nk}^{-}(P))^{1/(r+nk)}\\[5pt]
&\leq\limsup_{n\rightarrow\infty} \Big(d_{S,k}\,h(\gamma_{nk}^{-}(P))+B_{S,k}\Big)^{1/nk}\;\;\;\;\;\ \text{by (\ref{rat:bd1}),  (\ref{rat:degbd}), and (\ref{rat:degbd2})}\\[5pt] 
&=\limsup_{n\rightarrow\infty} h(\gamma_{nk}^{-}(P))^{1/nk} \vspace{.1cm}
\end{split} 
\end{equation}
Hence, combining the bound in (\ref{rat:bigbd}) with (\ref{rat:subseq}), we see that \vspace{.2cm}
\begin{equation}{\label{rat:bd6}}
\limsup_{m\rightarrow\infty} h(\gamma_m^{-}(P))^{1/m}\leq \exp\bigg(\mathbb{E}_{\bar{\nu}}\Big[\frac{\log\deg(\gamma_k^-)}{k}\Big]\bigg)=\exp\bigg(\frac{\mathbb{E}_{\bar{\nu}}[\log\deg(\gamma_k^-)]}{k}\bigg) \vspace{.2cm}
\end{equation}
holds for all $k\geq1$, for all $P\in\mathbb{P}^N(\bar{\mathbb{Q}})_S$, for $\bar{\nu}$-almost every $\gamma\in\Phi_S$. 
 
Now for iteration on the right. For any $k\geq1$ and $m\geq k$, write $\gamma_m^+=\gamma_{nk}^+\circ f$ for some $f\in M_{S,r}$, some $0\leq r<k$, and some $n\geq1$. Now let 
\[M_{S,k}(P):=\big\{Q\in\mathbb{P}^N(\overline{\mathbb{Q}})\,:\, Q=f(P)\;\text{for some $f\in M_{S,r}$ and $0\leq r<k$} \big\}\] 
In particular, $M_{S,k}(P)$ is a finite set of points since $S$ is finite. Therefore, 
\[\mathcal{C}_{S,k,P}:=\max_{Q\in M_{S,k}(P)}\{h(Q)+B_{S,k}\}\]
is a finite constant. Moreover, $h(\gamma_m^+(P))=h(\gamma_{nk}^+(Q))$ for some $Q\in M_{S,k}(P)$ by construction. On the other hand, (\ref{eq:stringbd}) and (\ref{rat:bd3}) hold for all $P\in\mathbb{P}^N(\overline{\mathbb{Q}})_S$. In particular, these bounds hold for all $Q\in M_{S,k}(P)$. Therefore, 
\begin{equation}{\label{rat:bd7}}
h(\gamma_m^+(P))^{1/m}=h(\gamma_{nk}^+(Q))^{1/m}\leq h(\gamma_{nk}^+(Q))^{1/nk}\leq \bigg(\exp\frac{1}{n}\sum_{j=0}^{n-1}F_{(k)}^{+}\big(T_{(k)}^j(\gamma)\big)\bigg)
\,\mathcal{C}_{S,k,P}^{1/nk}
\end{equation}
As before letting $m\rightarrow\infty$ (and therefore $n\rightarrow\infty$), Birkhoff's Ergodic Theorem implies that  
\begin{equation}{\label{rat:bd8}} 
\limsup_{m\rightarrow\infty} h(\gamma_m^{+}(P))^{1/m}\leq \exp\bigg(\mathbb{E}_{\bar{\nu}}\Big[\frac{\log\deg(\gamma_k^-)}{k}\Big]\bigg)=\exp\bigg(\frac{\mathbb{E}_{\bar{\nu}}[\log\deg(\gamma_k^-)]}{k}\bigg) \vspace{.2cm}
\end{equation}
holds for all $k\geq1$, for all $P\in\mathbb{P}^N(\bar{\mathbb{Q}})_S$, for $\bar{\nu}$-almost every $\gamma\in\Phi_S$; recall that the limit of the expected values of $F_{(k)}^-$ and $F_{(k)}^+$ are equal by (\ref{eq:exp=}). In particular letting $k\rightarrow\infty$, we deduce from (\ref{eq:dendegdef}) and our combined bounds in (\ref{rat:bd6}) and (\ref{rat:bd8}), that for $\bar{\nu}$-almost every $\gamma\in\Phi_S$ the bounds \vspace{.1cm}
\[\limsup_{m\rightarrow\infty} h(\gamma_m^{\pm}(P))^{1/m}\leq \delta_{S,\nu}\]
hold (simultaneously) for all $P\in\mathbb{P}^N(\overline{\mathbb{Q}})_S$. This completes the proof of Theorem \ref{thm:rationalmaps}.  
\end{proof}

\section{Morphisms: dynamical degrees and height bounds}\label{sec:morphisms}
Throughout this section, let $S$ be a set of height controlled set of endomorphisms on $\mathbb{P}^N$. Ideally, one would like to strengthen part (2) of Theorem \ref{thm:rationalmaps} for rational maps in two ways: to replace the limsup with a limit, and to replace the inequality with an equality; compare to \cite[Conjecture 6.d]{KawaguchiSilverman} and \cite[Conjecture 1.b]{SilvermanPN}. We succeed in proving this when $S$ is a finite set and the initial point $P$ has sufficiently large height. Moreover (perhaps surprisingly), the resulting limit is (almost surely) independent of the direction of iteration. To prove both Theorems \ref{thm:iteratedlogs} and \ref{thm:zero-one}, we need the following generalization of Tate's telescoping argument. In what follows, $M_S$ is the monoid generated by $S$ under composition, and $d_S$ and $C_S$ are the height controlled constants in Definition \ref{def:htcontrolled}. 
\begin{lemma}{\label{lem:tate}} Let $S$ be a height controlled set of endomorphisms of $\mathbb{P}^N(\overline{\mathbb{Q}})$, and let $d_S$ and $C_S$ be the corresponding height controlling constants. Then for all $\rho\in M_S$, 
\[\bigg|\frac{h(\rho(Q))}{\deg(\rho)}-h(Q)\bigg|\leq \frac{C_S}{d_S-1} \;\;\;\; \text{for all $Q\in \mathbb{P}^N(\overline{\mathbb{Q}})$.}\] 
\end{lemma}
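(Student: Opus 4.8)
The plan is to write $\rho = \phi_r \circ \phi_{r-1} \circ \cdots \circ \phi_1$ as a composition of elements of $S$ (so $\deg(\rho) = \prod_{i=1}^r d_{\phi_i}$, using that morphisms of $\mathbb{P}^N$ satisfy $\deg(f\circ g) = \deg(f)\deg(g)$), and to run Tate's telescoping trick on the partial compositions. First I would set $\rho_j := \phi_j \circ \cdots \circ \phi_1$ for $0 \le j \le r$ (with $\rho_0 = \mathrm{id}$, $\rho_r = \rho$) and consider the telescoping sum
\[
\frac{h(\rho(Q))}{\deg(\rho)} - h(Q) \;=\; \sum_{j=1}^{r} \left( \frac{h(\rho_j(Q))}{\deg(\rho_j)} - \frac{h(\rho_{j-1}(Q))}{\deg(\rho_{j-1})} \right).
\]
For each $j$, since $\rho_j = \phi_j \circ \rho_{j-1}$ and $\phi_j \in S$, the defining inequality of $C(\phi_j)$ in \eqref{htconstant} applied at the point $\rho_{j-1}(Q)$ gives $\bigl| h(\rho_j(Q)) - d_{\phi_j} h(\rho_{j-1}(Q)) \bigr| \le C(\phi_j) \le C_S$. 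Dividing by $\deg(\rho_j) = d_{\phi_j}\deg(\rho_{j-1})$ bounds the $j$-th term of the telescoping sum by $C_S / \deg(\rho_j)$.

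Next I would sum these bounds: the triangle inequality yields
\[
\left| \frac{h(\rho(Q))}{\deg(\rho)} - h(Q) \right| \;\le\; \sum_{j=1}^{r} \frac{C_S}{\deg(\rho_j)}.
\]
Now I use height control: since $d_S = \inf\{d_\phi : \phi \in S\} \ge 2$, we have $\deg(\rho_j) = \prod_{i=1}^j d_{\phi_i} \ge d_S^{\,j}$, so the sum is dominated by the geometric series $C_S \sum_{j\ge 1} d_S^{-j} = C_S/(d_S - 1)$. This gives the claimed bound, uniformly in $Q$ and in $\rho \in M_S$. One small bookkeeping point: the identity element of $M_S$ must be treated separately (the bound is trivially $0 \le C_S/(d_S-1)$ there), and one should note that the empty product convention $\deg(\rho_0) = 1$ makes the $j=1$ term well-defined.

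I do not expect a genuine obstacle here — this is the standard Tate argument, and the only thing that differs from the single-map case is that the degrees $d_{\phi_i}$ vary along the composition, which is harmless since the estimate $\deg(\rho_j) \ge d_S^{\,j}$ only uses the uniform lower bound $d_S \ge 2$ and the uniform upper bound $C_S$ on the error constants. The mildest care is needed in checking that $C(\phi_j) \le C_S$ is exactly what Definition \ref{def:htcontrolled} provides and that the inequality in \eqref{htconstant} holds at \emph{every} point of $\mathbb{P}^N(\overline{\mathbb{Q}})$, in particular at the intermediate points $\rho_{j-1}(Q)$; since the $\phi_i$ are morphisms there is no indeterminacy locus to worry about, so this is immediate.
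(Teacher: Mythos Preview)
Your proposal is correct and follows essentially the same approach as the paper: write $\rho$ as a composition of elements of $S$, form the partial compositions $\rho_j$, telescope $h(\rho(Q))/\deg(\rho)-h(Q)$ along them, bound each term by $C_S/\deg(\rho_j)$ using the defining inequality of $C(\phi_j)$, and dominate the result by the geometric series $\sum_{j\ge1}C_S/d_S^{\,j}$. The only difference from the paper is cosmetic (the paper indexes the telescope in the opposite direction).
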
 
\begin{proof} Suppose that $\rho=\theta_r\circ\theta_{r-1}\dots \circ\theta_1$ for $\theta_i\in S$, and let $\theta_0$ to be the identity map on $\mathbb{P}^N$. Then define 
\[\rho_{i}:=\theta_i\circ\theta_{i-1}\dots \circ\theta_1\circ\theta_0 \;\;\;\; \text{for $0\leq i\leq r$}.\vspace{.05cm}\]
Note, that $\rho=\rho_r$ and $\rho_0=\theta_0$ is the identity map. In particular, inspired by Tate's telescoping argument, we rewrite \vspace{.05cm} 
\begin{equation}{\label{Tate}}
\begin{split}
\bigg|\frac{h(\rho(Q))}{\deg(\rho)}-h(Q)\bigg|&=\bigg|\sum_{i=0}^{r-1}\frac{h(\rho_{r-i}(Q))}{\deg(\rho_{r-i})}- \frac{h(\rho_{r-i-1}(Q))}{\deg(\rho_{r-i-1})}\bigg|\\[5pt]
&\leq \sum_{i=0}^{r-1}\bigg|\frac{h(\rho_{r-i}(Q))}{\deg(\rho_{r-i})}- \frac{h(\rho_{r-i-1}(Q))}{\deg(\rho_{r-i-1})}\bigg| \\[5pt]
&=\sum_{i=0}^{r-1}\frac{\Big|h(\rho_{r-i}(Q))-\deg(\theta_{r-i})h(\rho_{r-i-1}(Q))\Big|}{\deg(\rho_{r-i})} \\[5pt] 
&\leq \sum_{i=1}^{r}\frac{C}{(d_S)^{i}}\leq \sum_{i=1}^{\infty}\frac{C_S}{(d_S)^i}=\frac{C_S}{d_S-1}.  
\end{split} 
\end{equation}  
This completes the proof of Lemma \ref{lem:tate}.
\end{proof} 
With this height bound in place, we are nearly ready to prove our main result for sets of  morphisms, Theorem \ref{thm:iteratedlogs}. In fact, we are able to prove a stronger result. Namely, both $h(\gamma_n^{\pm}(P))^{1/n}$ approach the dynamical degree (almost surely) whenever $P$ is a so called escape point for $S$; see Definition \ref{def:escapepts} below. Moreover, every point $P$ of sufficiently large height is an escape point for $S$, and we therefore recover Theorem \ref{thm:iteratedlogs}. 
\begin{remark} This improved version can be useful for analyzing dynamical Galois groups; see Section \ref{sec:Galois}. For instance, if $S=\{x^{d_1}+c_1, \dots,  x^{d_s}+c_s\}$ is a set of unicritical polynomials, then the right orbits of $P=0$ (i.e., the critical orbits) control the ramification in the associated towers of splitting fields; see Proposition \ref{prop:discriminant} below. However, $P=0$ does not have large enough height to apply Theorem \ref{thm:iteratedlogs} directly. Nevertheless, $P=0$ is very often an escape point for $S$ (see Corollary \ref{cor:unicritescape} below), in which case the conclusions of Theorem \ref{thm:iteratedlogs} still hold.      
\end{remark} 
To define escape points, recall that $M_{S,r}$ denotes the set of functions generated by tuples of elements of $S$ of length $r$; see (\ref{def:strings}) above. Moreover by convention, $M_{S,0}$ is the singleton set containing the identity function.  
\begin{definition}\label{def:escapepts} Let $S$ be a height controlled set of endomorphisms of $\mathbb{P}^N(\overline{\mathbb{Q}})$ and define  $B_S:=C_S/(d_S-1)$. If there exists $r\geq0$ such that $h(g(P))> B_S$ for all $g\in M_{S,r}$,
then we say that $P$ is an \emph{escape point} for $S$. Moreover, we call the minimum such value of $r$ the \emph{escape level} of $P$. 
\end{definition} 
The importance of escape points is explained by the following auxiliary result. Namely, if $P$ is an escape point for $S$, then we can bound quantities of the form $h(f(P))/\deg(f)$ from below (in a nontrivial way). This may be viewed as analogous to $P$ having positive canonical height when iterating a single function. However, this is not a perfect analogy, since canonical heights do not exist in general for right iteration; see Example \ref{eg:left-right difference} above. 
\begin{lemma}\label{lem:escapept} Let $S$ be a finite set of endomorphisms of $\mathbb{P}^N(\overline{\mathbb{Q}})$ all of degree at least two and let $P$ be an escape point for $S$ with escape level $r\geq0$. Then there exist positive constants $B_{S,P,1}$ and $B_{S,P,2}$ (depending on $P$) such that   
\[0<B_{S,P,1}\leq\frac{h(f(P))}{\deg(f)}\leq B_{S,P,2}\]
for all $f\in M_{S,n}$ with $r\leq n$. 
\end{lemma}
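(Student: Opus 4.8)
The plan is to derive both bounds from Lemma \ref{lem:tate} applied at a fixed ``base level'' and then amplify the resulting additive estimates into multiplicative ones using the finiteness of $S$. First I would record what Lemma \ref{lem:tate} gives directly: for any $g \in M_S$ and any $Q \in \mathbb{P}^N(\overline{\mathbb{Q}})$ we have $|h(g(Q))/\deg(g) - h(Q)| \leq B_S$, where $B_S = C_S/(d_S-1)$. Now suppose $P$ is an escape point with escape level $r$, and fix $n \geq r$. Any $f \in M_{S,n}$ can be written as $f = \rho \circ g$ where $g \in M_{S,r}$ and $\rho \in M_{S,n-r} \subseteq M_S$; note that here I am decomposing the $n$-term string into its first $r$ entries (forming $g$, applied first) and the remaining $n-r$ entries (forming $\rho$), so this works for either direction of iteration since I am only tracking string length. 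Then $h(f(P)) = h(\rho(g(P)))$, and applying Lemma \ref{lem:tate} to $\rho$ and the point $Q = g(P)$ gives
\[
\bigg| \frac{h(f(P))}{\deg(\rho)} - h(g(P)) \bigg| \leq B_S.
\]
Since $P$ is an escape point of level $r$, we have $h(g(P)) > B_S$, so the left side forces $h(f(P))/\deg(\rho) \geq h(g(P)) - B_S > 0$ and $h(f(P))/\deg(\rho) \leq h(g(P)) + B_S$.

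Next I would convert the normalization by $\deg(\rho)$ into a normalization by $\deg(f)$. Since $f = \rho \circ g$, the degree bound $\deg(f) \leq \deg(\rho)\deg(g)$ holds, and for morphisms of $\mathbb{P}^N$ we in fact have $\deg(f) = \deg(\rho)\deg(g)$ (multiplicativity of degree for endomorphisms). Thus $\deg(f)/\deg(\rho) = \deg(g)$, and so
\[
\frac{h(f(P))}{\deg(f)} = \frac{1}{\deg(g)} \cdot \frac{h(f(P))}{\deg(\rho)}.
\]
Combining with the two-sided bound above,
\[
\frac{h(g(P)) - B_S}{\deg(g)} \leq \frac{h(f(P))}{\deg(f)} \leq \frac{h(g(P)) + B_S}{\deg(g)}.
\]
Now the key point: $M_{S,r}$ is a \emph{finite} set (since $S$ is finite), so as $g$ ranges over $M_{S,r}$ the quantities $h(g(P))$ and $\deg(g)$ take only finitely many values, all with $\deg(g) \geq 2^r \geq 1$ and $h(g(P)) > B_S > 0$. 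Therefore I can set
\[
B_{S,P,1} := \min_{g \in M_{S,r}} \frac{h(g(P)) - B_S}{\deg(g)} > 0 \quad\text{and}\quad B_{S,P,2} := \max_{g \in M_{S,r}} \frac{h(g(P)) + B_S}{\deg(g)},
\]
and these are finite positive constants depending only on $S$ and $P$. Since every $f \in M_{S,n}$ with $n \geq r$ decomposes as above for some $g \in M_{S,r}$, the desired inequality $0 < B_{S,P,1} \leq h(f(P))/\deg(f) \leq B_{S,P,2}$ follows for all such $f$, completing the proof.

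The only genuinely delicate point is the decomposition $f = \rho \circ g$ with $g$ built from a length-$r$ substring and the verification that this substring is forced to lie in $M_{S,r}$ regardless of iteration direction — but this is essentially bookkeeping about which $r$ of the $n$ composition slots one peels off (the ``innermost'' $r$, i.e. the ones applied first to $P$), and it is the same kind of string-splitting already used in the proof of Theorem \ref{thm:rationalmaps}(2). Everything else is a direct application of Lemma \ref{lem:tate} together with finiteness of $S$; no ergodic theory is needed for this lemma, which is purely a statement about individual strings rather than about random sequences $\gamma$.
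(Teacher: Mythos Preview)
Your proof is correct and follows essentially the same approach as the paper: decompose $f = \rho \circ g$ with $g \in M_{S,r}$, apply Lemma~\ref{lem:tate} to $\rho$ at the point $g(P)$, and use finiteness of $M_{S,r}$ to pass from the per-$g$ bounds to uniform constants. The only cosmetic difference is that the paper obtains the upper bound more directly by applying Lemma~\ref{lem:tate} to $f$ itself and $Q = P$ (giving $B_{S,P,2} = h(P) + B_S$), whereas you route the upper bound through the same decomposition used for the lower bound; both are fine.
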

\begin{proof} The upper bound on $h(f(P))/\deg(f)$ follows directly from Lemma \ref{lem:tate} applied to the map $\rho=f$ and the point $Q=P$. For the lower bound, let $r\geq0$ be the escape level of $P$ and let $f\in M_{S,n}$ for some $n\geq r$. Then we can write $f=j\circ g$ for some $j\in M_{S,n-r}$ and some $g\in M_{S,r}$. Then Lemma \ref{lem:tate} applied to the map $\rho=j$ and the point $Q=g(P)$ implies that \vspace{.1cm} 
\begin{equation}\label{lbdespace}
\begin{split} 
\frac{h(f(P))}{\deg(f)}=\frac{h(j(g(P)))}{\deg(j)\deg(g)}&\geq\frac{1}{\deg(g)}\big(h(g(P))-B_S\big)\\[8pt]
&\geq \frac{1}{\displaystyle{\max_{g\in M_{S,r}}\{\deg{g}\}}}\cdot\min_{g\in M_{S,r}}\big\{h(g(P))-B_S\big\}
\end{split} 
\end{equation}
However, since $S$ is a finite set and $r$ is fixed, the degree of $g\in M_{S,r}$ is absolutely bounded. Likewise, since $P$ is an escape point for $S$, the quantity $h(g(P))-B_S$ is positive for all $g\in M_{S,r}$. Therefore, the minimum on the right hand side of (\ref{lbdespace}) is positive, since it is the minimum value of a finite set of positive numbers. In particular, there is a positive constant $B_{S,P,2}$, depending only on $S$ and $P$, such that $h(f(P))/\deg(f)>B_{2,P}$ as claimed.  
\end{proof} 
With Lemma \ref{lem:escapept} in place, we are ready to prove an improved version of Theorem \ref{thm:iteratedlogs} from the Introduction for escape points.  
\begin{theorem}\label{thm:escapepoints} Let $S$ be a finite set of endomorphisms of $\mathbb{P}^N(\overline{\mathbb{Q}})$ all of degree at least two, and let $\nu$ be a discrete probability measure on $S$. Then the following statements hold: \vspace{.25cm}
\begin{enumerate} 
\item[\textup{(1)}] The dynamical degree is given by $\displaystyle{\delta_{S,\nu}=\prod_{\phi\in S}\deg(\phi)^{\nu(\phi)}}$. \vspace{.3cm} 
\item[\textup{(2)}] For $\bar{\nu}$-almost every $\gamma\in\Phi_S$, the limits \vspace{.1cm}  
\[\lim_{n\rightarrow\infty}h(\gamma_n^-(P))^{1/n}=\delta_{S,\nu}=\lim_{n\rightarrow\infty}h(\gamma_n^+(P))^{1/n}\vspace{.15cm}\]
hold (simultaneously) for all escape points $P$ for $S$. \vspace{.4cm}  
\item[\textup{(3)}] If the variance $\sigma^2$ of $\log(\deg(\phi))$ is nonzero, then for $\bar{\nu}$-almost every $\gamma\in\Phi_S$, \vspace{.1cm}
\[\limsup_{n\rightarrow\infty}\frac{\log\bigg(\mathlarger{\frac{h(\gamma_n^{\pm}(P))}{\delta_{S,\nu}^n}}\bigg)}{\sigma_{S,\nu}\sqrt{2n\log\log n}}=1=\limsup_{n\rightarrow\infty}\frac{\log\bigg(\mathlarger{\frac{\delta_{S,\nu}^n}{h(\gamma_n^{\pm}(P))}}\bigg)}{\sigma_{S,\nu}\sqrt{2n\log\log n}},\vspace{.3cm}\]  
hold (simultaneously) for all escape points $P$ for $S$. \vspace{.1cm}
\end{enumerate}  
\end{theorem}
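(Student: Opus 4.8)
The plan is to leverage the fact that, for morphisms, the degree is genuinely multiplicative --- $\deg(F\circ G)=\deg(F)\deg(G)$ --- so that, writing $\gamma=(\theta_s)_{s\geq1}$, the quantity $\log\deg(\gamma_n^{\pm})$ equals $\sum_{i=1}^{n}\log\deg(\theta_i)$ regardless of the direction of iteration. For statement (1) I would combine this with the defining formula (\ref{eq:dendegdef}) from Theorem \ref{thm:rationalmaps}(1) (available since $S$ is finite, so $\mathbb{E}_\nu[\log\deg]$ exists): multiplicativity gives $\mathbb{E}_{\bar{\nu}}[g_n^{\pm}]=n\,\mathbb{E}_\nu[\log\deg(\phi)]$, whence $\delta_{S,\nu}=\exp(\mathbb{E}_\nu[\log\deg(\phi)])=\prod_{\phi\in S}\deg(\phi)^{\nu(\phi)}$. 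One could equally bypass Theorem \ref{thm:rationalmaps} and invoke Birkhoff's Ergodic Theorem \ref{birk} directly for the shift, since $\tfrac1n\sum_{j=0}^{n-1}f_1\circ T^j\to\mathbb{E}_\nu[f_1]$ almost surely.

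For statement (2) I would first extract from Lemma \ref{lem:escapept} the two-sided estimate: if $P$ is an escape point for $S$ of escape level $r$, there are positive constants $B_{S,P,1}\leq B_{S,P,2}$ with $B_{S,P,1}\leq h(\gamma_n^{\pm}(P))/\deg(\gamma_n^{\pm})\leq B_{S,P,2}$ for all $n\geq r$, since $\gamma_n^{\pm}\in M_{S,n}$. Setting $c_n^{\pm}:=h(\gamma_n^{\pm}(P))/\deg(\gamma_n^{\pm})$, this forces $(c_n^{\pm})^{1/n}\to1$; and since $\deg(\gamma_n^{\pm})^{1/n}\to\delta_{S,\nu}$ for $\bar{\nu}$-almost every $\gamma$ by Theorem \ref{thm:rationalmaps}(1) together with statement (1), multiplying gives $h(\gamma_n^{\pm}(P))^{1/n}=(c_n^{\pm})^{1/n}\deg(\gamma_n^{\pm})^{1/n}\to\delta_{S,\nu}$. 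The crucial point is that the $\bar{\nu}$-null exceptional set --- the set on which $\deg(\gamma_n^{\pm})^{1/n}$ fails to converge --- does not involve $P$, which is what makes the limit hold simultaneously over all escape points $P$ and over both directions of iteration.

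For statement (3) I would take logarithms in $h(\gamma_n^{\pm}(P))=c_n^{\pm}\deg(\gamma_n^{\pm})$ to get, for $n\geq r$,
\[\log\!\Big(h(\gamma_n^{\pm}(P))/\delta_{S,\nu}^{\,n}\Big)=\widetilde S_n+O_{S,P}(1),\qquad \widetilde S_n:=\textstyle\sum_{i=1}^{n}\big(\log\deg(\theta_i)-\log\delta_{S,\nu}\big),\]
the error term being $\log c_n^{\pm}$, bounded by $\max\{|\log B_{S,P,1}|,|\log B_{S,P,2}|\}$. By statement (1) the summands of $\widetilde S_n$ are i.i.d.\ with mean $0$ and variance $\sigma_{S,\nu}^2$, so applying the Law of Iterated Logarithms (Theorem \ref{thm:lawiterlogs}) to $X_i:=(\log\deg(\theta_i)-\log\delta_{S,\nu})/\sigma_{S,\nu}$ yields $\limsup_n \widetilde S_n/(\sigma_{S,\nu}\sqrt{2n\log\log n})=1=\limsup_n (-\widetilde S_n)/(\sigma_{S,\nu}\sqrt{2n\log\log n})$ almost surely. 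Because $\sqrt{2n\log\log n}\to\infty$, the bounded error $O_{S,P}(1)$ washes out of both quotients, giving exactly the two asserted equalities: the first from the $+\widetilde S_n$ version, the second --- involving $\log(\delta_{S,\nu}^{\,n}/h(\gamma_n^{\pm}(P)))=-\widetilde S_n+O_{S,P}(1)$ --- from the $-\widetilde S_n$ version. As before, the almost sure event is the Law-of-Iterated-Logarithms event for the $\gamma$-functional $(\widetilde S_n)$, independent of $P$, so the conclusion is uniform over escape points and directions.

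I do not expect a serious obstacle here once Lemmas \ref{lem:tate} and \ref{lem:escapept} are in hand; the two points needing care are (i) recognizing that multiplicativity of $\deg$ on morphisms makes the direction of iteration irrelevant already at the level of degrees --- so the ``$\mathbb{E}[g_n^-]$ versus $\mathbb{E}[g_n^+]$'' subtlety from Theorem \ref{thm:rationalmaps} simply disappears --- and (ii) isolating the $P$-dependence into the bounded quantity $\log c_n^{\pm}$, so that a single $\bar{\nu}$-null exceptional set (from Birkhoff, respectively the Law of Iterated Logarithms, applied to a functional of $\gamma$ alone) serves uniformly for all escape points and both directions of iteration.
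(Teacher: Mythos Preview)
Your proposal is correct and follows essentially the same route as the paper's proof: multiplicativity of degree for morphisms combined with Birkhoff's theorem for (1), the two-sided bound from Lemma \ref{lem:escapept} sandwiching $h(\gamma_n^{\pm}(P))^{1/n}$ between $B_{S,P,i}^{1/n}\deg(\gamma_n^{\pm})^{1/n}$ for (2), and the Law of Iterated Logarithms applied to the centered i.i.d.\ sequence $(\log\deg(\theta_i)-\log\delta_{S,\nu})/\sigma_{S,\nu}$ together with the boundedness of $\log c_n^{\pm}$ for (3). Your observation that the almost-sure exceptional set depends only on $\gamma$ and not on $P$ is exactly the mechanism the paper uses to obtain simultaneity over all escape points and both directions.
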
 
\begin{remark} Note that if $h(P)>B_S$, then $P$ is an escape point for $S$ of level $r=0$. In particular, Theorem \ref{thm:escapepoints} implies Theorem \ref{thm:iteratedlogs} from the Introduction.  
\end{remark} 
\begin{proof}[(Proof of Theorem \ref{thm:iteratedlogs})] 
For statement (1), consider $f_1:\Phi_S\rightarrow\mathbb{R}$ given by:  
\[f_1(\gamma)=\log\deg(\theta_1)\qquad \text{for\;\,$\gamma=(\theta_i)_{i=1}^\infty\in\Phi_S$}.\]
Then Birkhoff's Ergodic Theorem \ref{birk} and Lemma \ref{shift} together imply that 
\[\lim_{n\rightarrow\infty} \frac{1}{n}\sum_{j=0}^{n-1} f_1\circ T^j(\gamma)=\mathbb{E}_{\bar{\nu}}[f_1].\]
for almost every $\gamma\in\Phi_S$; here $T:\Phi_S\rightarrow\Phi_S$ is the shift map. On the other hand, since 
\[\deg(F\circ G)=\deg(F)\cdot\deg(G)=\deg(G)\cdot\deg(F)=\deg(G\circ F)\]
for all endomorphisms $F$ and $G$ on $\mathbb{P}^N$, we have that 
\[\log\deg(\gamma^{\pm}_n)^{1/n}=\frac{1}{n}\sum_{j=0}^{n-1} f_1\circ T^j(\gamma).\]
In particular, $\delta_{S,\nu}=\displaystyle{\lim_{n\rightarrow\infty}\deg(\gamma_n^{\pm})^{1/n}}=\exp\big(\mathbb{E}_{\bar{\nu}}[f_1]\big)$ almost surely. However, $f_1:\Phi_S\rightarrow\mathbb{R}$ factors through $S$, so that \cite[Theorem 10.4]{ProbabilityText} implies that    
\[\delta_{S,\nu}=\exp\big(\mathbb{E}_{\bar{\nu}}[f_1]\big)=\exp\big(\mathbb{E}_{\nu}[\log\deg(\phi)]\big)=\exp\Big(\sum_{\phi\in S}\log\deg(\phi)\nu(\phi)\Big)=\prod_{\phi\in S}\deg(\phi)^{\nu(\phi)}\]
as claimed. For statement (2), let $\gamma\in\Phi_S$ be such that $\lim\deg(\gamma_n^{\pm})^{1/n}=\delta_{S,\nu}$, true of almost every $\gamma\in \Phi_S$, and let $P$ be an escape point for $S$. Then Lemma \ref{lem:escapept} implies that there are positive constants $B_{S,P,1}$ and $B_{S,P,2}$ such that  
\[\qquad B_{S,P,1}\cdot\deg(\gamma_n^{\pm})<h(\gamma_n^{\pm}(P))<B_{S,P,2}\cdot\deg(\gamma_n^{\pm}),\qquad\text{for all $\gamma\in\Phi_S$ and all $n\geq r$;}\]
here $r$ is the escape level of $P$. Therefore, taking $n$th roots of both sides and letting $n$ tend to infinity, we see that 
\[\delta_{S,\nu}=\lim_{n\rightarrow\infty}B_{S,P,1}^{1/n}\cdot\lim_{n\rightarrow\infty}\deg(\gamma_n^{\pm})^{1/n}\leq h(\gamma_n^{\pm}(P))^{1/n}\leq\lim_{n\rightarrow\infty}B_{S,P,2}^{1/n}\cdot\lim_{n\rightarrow\infty}\deg(\gamma_n^{\pm})^{1/n}=\delta_{S,\nu}.\]
Hence, for almost every $\gamma\in\Phi_S$ the limits 
\[\lim_{n\rightarrow\infty}h(\gamma_n^{\pm}(P))^{1/n}=\delta_{S,\nu}\]
hold (simultaneously) for all escape points $P$ for $S$ as claimed. For statement (3), suppose that $P$ is an escape point for $S$ and that the variance $\sigma^2$ of the random variable $\log\deg(\cdot): S\rightarrow\mathbb{R}$ is nonzero; here, $\sigma^2$ is given explicitly by  
\[\sigma^2=\sum_{\phi\in S} \big(\log\deg(\phi)-\log(\delta_{S,\nu})\big)^2\nu(\phi).\]
Then it follows from Lemma \ref{lem:escapept} that \vspace{.1cm} 
\begin{equation}\label{escapept:loght}
\lim_{n\rightarrow\infty}\frac{\log\bigg(\mathlarger{\frac{h(\gamma_n^{\pm}(P))}{\deg(\gamma_n^{\pm})}}\bigg)}{\sigma\sqrt{2n\log\log n}}=0=\lim_{n\rightarrow\infty}\frac{\log\bigg(\mathlarger{\frac{\deg(\gamma_n^{\pm})}{h(\gamma_n^{\pm}(P))}}\bigg)}{\sigma\sqrt{2n\log\log n}}\qquad \text{for all $\gamma\in\Phi_{S}$;} \vspace{.15cm}
\end{equation} 
here we simply use that the quantities $\log\frac{h(\gamma_n^{\pm}(P))}{\deg(\gamma_n^{\pm})}$ are bounded independently of $n\geq r$ by Lemma \ref{lem:escapept}. On the other hand, consider the i.i.d random variables $Y_n:\Phi_S\rightarrow\mathbb{R}$ given by 
\[Y_n(\gamma)=\frac{1}{\sigma}\big(\log\deg(\theta_n)-\log\delta\big),\qquad\text{for $\gamma=(\theta_i)_{i\geq1}\in\Phi_S$;}\]
In particular, each $Y_n$ has mean $0$ and unit variance. Therefore, the Hartman-Wintner Law of the Iterated Logarithms (Theorem \ref{thm:lawiterlogs}) for the simple random walk $S_n=Y_1+\dots +Y_n$  implies that \vspace{.05cm} 
\begin{equation}\label{escapept:lawiterlog} 
\limsup_{n\rightarrow\infty}\frac{\log\bigg(\mathlarger{\frac{\deg(\gamma_n^{\pm})}{\delta_{S,\nu}^n}}\bigg)}{\sigma\sqrt{2n\log\log n}}=1=\limsup_{n\rightarrow\infty}\frac{\log\bigg(\mathlarger{\frac{\delta_{S,\nu}^n}{\deg(\gamma_n^{\pm})}}\bigg)}{\sigma\sqrt{2n\log\log n}} \qquad \text{($\bar{\nu}$-almost surely).}
\end{equation}  
Hence, the conclusions in both (\ref{escapept:loght}) and (\ref{escapept:lawiterlog}) hold for almost every $\gamma\in\Phi_S$. Therefore, \vspace{.1cm}
 \begin{equation*}
\begin{split}
1&=\limsup_{n\rightarrow\infty}\frac{\log\bigg(\mathlarger{\frac{\deg(\gamma_n^{\pm})}{\delta_{S,\nu}^n}}\bigg)}{\sigma\sqrt{2n\log\log n}}+\lim_{n\rightarrow\infty}\frac{\log\bigg(\mathlarger{\frac{h(\gamma_n^{\pm}(P))}{\deg(\gamma_n^{\pm})}}\bigg)}{\sigma\sqrt{2n\log\log n}}\\[10pt]
&=\limsup_{n\rightarrow\infty}\frac{\log\bigg(\mathlarger{\frac{\deg(\gamma_n^{\pm})}{\delta_{S,\nu}^n}}\bigg)+\log\bigg(\mathlarger{\frac{h(\gamma_n^{\pm}(P))}{\deg(\gamma_n^{\pm})}}\bigg)}{\sigma\sqrt{2n\log\log n}}=\limsup_{n\rightarrow\infty}\frac{\log\bigg(\mathlarger{\frac{h(\gamma_n^{\pm}(P))}{\delta_{S,\nu}^n}}\bigg)}{\sigma\sqrt{2n\log\log n}}\\[5pt]
\end{split}  
\end{equation*}
for almost every $\gamma\in\Phi_S$. Likewise, (\ref{escapept:loght}) and (\ref{escapept:lawiterlog}) imply that \vspace{.3cm}  
\begin{equation*}
\begin{split}
1&=\limsup_{n\rightarrow\infty}\frac{\log\bigg(\mathlarger{\frac{\delta_{S,\nu}^n}{\deg(\gamma_n^{\pm})}}\bigg)}{\sigma\sqrt{2n\log\log n}}+ \lim_{n\rightarrow\infty}\frac{\log\bigg(\mathlarger{\frac{\deg(\gamma_n^{\pm})}{h(\gamma_n^{\pm}(P))}}\bigg)}{\sigma\sqrt{2n\log\log n}}\\[10pt] 
&=\limsup_{n\rightarrow\infty}\frac{\log\bigg(\mathlarger{\frac{\delta_{S,\nu}^n}{\deg(\gamma_n^{\pm})}}\bigg)+\log\bigg(\mathlarger{\frac{\deg(\gamma_n^{\pm})}{h(\gamma_n^{\pm}(P))}}\bigg)}{\sigma\sqrt{2n\log\log n}}=\limsup_{n\rightarrow\infty}\frac{\log\bigg(\mathlarger{\frac{\delta_{S,\nu}^n}{h(\gamma_n^{\pm}(P))}}\bigg)}{\sigma\sqrt{2n\log\log n}}\\[8pt] 
\end{split}  
\end{equation*}
holds for $\bar{\nu}$-almost every $\gamma\in\Phi_S$, whenever $P$ is an escape point for $S$ and $\sigma^2$ is nonzero. This complete the proof of Theorem \ref{thm:escapepoints}.   
\end{proof} 
As an application of Theorem \ref{thm:escapepoints}, we can prove an asymptotic formula for the number of points in generic left and right orbits.  
\begin{proof}[(Proof of Corollary \ref{cor:escapeptshtbds})] We mostly follow the proof of \cite[Proposition 3]{KawaguchiSilverman}. However, there is an added step, which allows us to pass from superscripts of $\gamma_{n}^{\pm}(P)$ to points in orbits $Q\in\Orb_\gamma^{\pm}(P)$; see Lemma \ref{lem:n'stopoints} below. Let $P$ be an escape point for $S$ and let $\gamma\in\Phi_S$ be such that $\lim h(\gamma_n^{\pm}(P))^{1/n}=\delta_{S,n}$, true of almost every $\gamma$ by Theorem \ref{thm:escapepoints}. Then for every $\epsilon>0$ there is an integer $n_0=n_0(\epsilon,\gamma)$ so that 
\[(1-\epsilon)\delta_{S,\nu}\leq h(\gamma_n^{\pm}(P))^{1/n}\leq(1+\epsilon)\delta_{S,\nu}\]
for all $n\geq n_0$; here you choose $n_0$ to be max of the corresponding $n_0(\epsilon,\gamma,-)$ and $n_0(\epsilon,\gamma,+)$. In particular, it follows that \vspace{.1cm} 
\begin{equation}\label{basiccount1}
\begin{split} 
\{n\geq n_0\,:\,(1+\epsilon)\delta_{S,\nu}\leq B^{1/n}\}&\subset\{n\geq n_0\,:\,h(\gamma_n^{\pm}(P))\leq B\} \\[2pt] 
&\text{and}\\[2pt] 
\{n\geq n_0\,:\,h(\gamma_n^{\pm}(P))\leq B\}&\subset\{n\geq n_0\,:\,(1-\epsilon)\delta_{S,\nu}\leq B^{1/n}\}. 
\end{split} 
\end{equation}
Therefore, after counting the number elements in the sets in (\ref{basiccount1}), we see that 
\begin{equation*}
\begin{split} 
\frac{\log(B)}{\log((1+\epsilon)\delta_{S,\nu})}-n_0&\leq\#\{n\geq0\,:\,h(\gamma_n^{\pm}(P))\leq B\}\\[2pt] 
&\text{and}\\[2pt] 
\#\{n\geq0\,:\,h(\gamma_n^{\pm}(P))\leq B\}&\leq\frac{\log(B)}{\log((1-\epsilon)\delta_{S,\nu})}+n_0+1. \\[3pt] 
\end{split} 
\end{equation*} 
Hence, dividing by $\log(B)$ and letting $B$ tend to infinity gives \vspace{.1cm} 
\begin{equation*}
\begin{split} 
\frac{1}{\log((1+\epsilon)\delta_{S,\nu})}&\leq\liminf_{B\rightarrow\infty}\frac{\#\{n\geq0\,:\,h(\gamma_n^{\pm}(P))\leq B\}}{\log(B)}\\[3pt] 
&\text{and} \\[3pt] 
\limsup_{B\rightarrow\infty}\frac{\#\{n\geq0\,:\,h(\gamma_n^{\pm}(P))\leq B\}}{\log(B)}&\leq \frac{1}{\log((1-\epsilon)\delta_{S,\nu})}. \\[4pt]
\end{split}  
\end{equation*}  
In particular, since $\epsilon$ was arbitrary, we deduce that   \vspace{.25cm}
\begin{equation}\label{count:n's} 
\lim_{B\rightarrow\infty}\frac{\#\{n\geq0\,:\,h(\gamma_n^{-}(P))\leq B\}}{\log(B)}=\frac{1}{\log(\delta_{S,\nu})}=\lim_{B\rightarrow\infty}\frac{\#\{n\geq0\,:\,h(\gamma_n^{+}(P))\leq B\}}{\log(B)} \vspace{.2cm} 
\end{equation}  
hold (simultaneously) for almost every $\gamma\in\Phi_S$. From here, we pass from superscripts $n$ to points in orbits by the following lemma; however, we must assume that the initial point $P$ has height at least $3B_S$ (instead of $B_S$). In particular, we deduce from (\ref{count:n's}) and Lemma \ref{lem:n'stopoints} below, that for almost every $\gamma\in\Phi_S$ the limits \vspace{.15cm} 
\[\lim_{B\rightarrow\infty}\frac{\#\{Q\in\Orb_\gamma^-(P)\,:\,h(Q)\leq B\}}{\log(B)}=\frac{1}{\log\delta_{S,\nu}}=\lim_{B\rightarrow\infty}\frac{\#\{W\in\Orb_\gamma^+(P)\,:\,h(W)\leq B\}}{\log(B)} \vspace{.15cm}\] 
hold (simultaneously) for all $P$ with $h(P)>3B_S$ as claimed.    
\end{proof}   
\begin{lemma}\label{lem:n'stopoints} Let $S$ be a height controlled set of endomorphisms of $\mathbb{P}^N(\overline{\mathbb{Q}})$. If $h(P)>3B_S$, then $\gamma_n^{-}(P)\neq\gamma_m^{-}(P)$ and $\gamma_n^{+}(P)\neq\gamma_m^{+}(P)$ for all $n\neq m$ and all $\gamma\in\Phi_S$. 
\end{lemma}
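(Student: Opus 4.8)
The plan is to compare both $\gamma_n^{\pm}(P)$ and $\gamma_m^{\pm}(P)$ \emph{directly} to the initial point $P$ by means of the telescoping estimate in Lemma~\ref{lem:tate}, rather than trying to relate $\gamma_n^{\pm}$ to $\gamma_m^{\pm}$ through the intermediate ``tail'' map (an approach that would require an injectivity hypothesis on the $\theta_i$ that morphisms need not satisfy). Since every map in $S$ is an endomorphism, there is no indeterminacy to worry about, and each iterate $\gamma_k^{\pm}$ is a genuine element of the monoid $M_S$ defined everywhere on $\mathbb{P}^N(\overline{\mathbb{Q}})$.

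First I would reduce to the case $n>m\geq 0$ by symmetry, fix a sign $\varepsilon\in\{+,-\}$, and argue by contradiction: assume $\gamma_n^{\varepsilon}(P)=\gamma_m^{\varepsilon}(P)$, so that in particular $h(\gamma_n^{\varepsilon}(P))=h(\gamma_m^{\varepsilon}(P))$. Applying Lemma~\ref{lem:tate} with $\rho=\gamma_n^{\varepsilon}$ and $Q=P$ gives the lower bound $h(\gamma_n^{\varepsilon}(P))\geq \deg(\gamma_n^{\varepsilon})\,(h(P)-B_S)$, and applying it with $\rho=\gamma_m^{\varepsilon}$ and $Q=P$ gives the upper bound $h(\gamma_m^{\varepsilon}(P))\leq \deg(\gamma_m^{\varepsilon})\,(h(P)+B_S)$; the case $m=0$ is immediate since $\gamma_0^{\varepsilon}=\text{Id}$ has degree $1$.

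Next I would use that the degree of a morphism is multiplicative under composition. Writing $\gamma_n^{-}=(\theta_n\circ\cdots\circ\theta_{m+1})\circ\gamma_m^{-}$ in the left case and $\gamma_n^{+}=\gamma_m^{+}\circ(\theta_{m+1}\circ\cdots\circ\theta_n)$ in the right case, in both cases $\deg(\gamma_n^{\varepsilon})=\deg(\gamma_m^{\varepsilon})\cdot\deg(\theta_{m+1}\circ\cdots\circ\theta_n)\geq d_S\cdot\deg(\gamma_m^{\varepsilon})\geq 2\,\deg(\gamma_m^{\varepsilon})$, since $n>m$ and $d_S\geq 2$. As $h(P)>3B_S\geq B_S\geq 0$, we have $h(P)-B_S>0$, so combining the two height bounds with the equality $h(\gamma_n^{\varepsilon}(P))=h(\gamma_m^{\varepsilon}(P))$ yields
\[
2\deg(\gamma_m^{\varepsilon})\,(h(P)-B_S)\leq \deg(\gamma_n^{\varepsilon})\,(h(P)-B_S)\leq h(\gamma_n^{\varepsilon}(P))= h(\gamma_m^{\varepsilon}(P))\leq \deg(\gamma_m^{\varepsilon})\,(h(P)+B_S).
\]
Dividing by $\deg(\gamma_m^{\varepsilon})>0$ gives $2(h(P)-B_S)\leq h(P)+B_S$, i.e. $h(P)\leq 3B_S$, contradicting the hypothesis; this settles both signs and all $\gamma\in\Phi_S$ at once.

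There is essentially no serious obstacle here; the only point requiring care is the numerical constant. The value $3B_S$ (rather than $2B_S$) is forced precisely by the asymmetry between the lower estimate $\deg(\rho)(h(P)-B_S)$ and the upper estimate $\deg(\rho)(h(P)+B_S)$ supplied by Lemma~\ref{lem:tate}, and this is exactly the threshold needed to keep the injectivity statement uniform over all of $\Phi_S$ and over both directions of iteration.
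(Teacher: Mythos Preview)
Your proof is correct and is essentially the same as the paper's: both argue by contradiction, apply Lemma~\ref{lem:tate} separately to $\gamma_n^{\pm}$ and $\gamma_m^{\pm}$ at the point $P$, use multiplicativity of degrees together with $d_S\geq 2$ to bound $\deg(\gamma_n^{\pm})/\deg(\gamma_m^{\pm})\geq 2$, and conclude $h(P)\leq 3B_S$. Your write-up is slightly more explicit in checking $h(P)-B_S>0$ before manipulating the inequalities and in handling the $m=0$ case, but the underlying argument is identical.
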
 
\begin{proof} Suppose that $n>m$ and that $\gamma_n^{\pm}(P)=\gamma_m^{\pm}(P)$. In particular, $h(\gamma_n^{\pm}(P))=h(\gamma_m^{\pm}(P))$. Then Lemma \ref{lem:tate} applied separately to $f=\gamma_n^{\pm}$ and then to $f=\gamma_m^{\pm}$ implies that 
\[\deg(\gamma_n^{\pm})\cdot(h(P)-B_S)\leq h(\gamma_n^{\pm}(P))=h(\gamma_m^{\pm}(P))\leq \deg(\gamma_m^{\pm})\cdot(h(P)+B_S).\]
Rearranging terms, we deduce that 
\begin{equation}\label{distinctorbits}
\frac{\deg(\gamma_n^{\pm})}{\deg(\gamma_m^{\pm})}\leq \frac{(h(P)+B_S)}{(h(P)-B_S)}.
\end{equation} 
However, $n>m$ so that $\gamma_n^-=g_1\circ\gamma_m^-$ and $\gamma_n^+=\gamma_m^+\circ g_2$ for some $g_1,g_2\in M_{S,n-m}$. Moreover, $S$ is height controlled, so that $\deg(g_i)\geq2$. Furthermore, $\deg(\gamma_n^-)=\deg(g_1)\cdot\deg(\gamma_m^-)$ and $\deg(\gamma_n^+)=\deg(g_2)\cdot\deg(\gamma_m^+)$. Combining these facts with (\ref{distinctorbits}), we deduce that  
\[2\leq \frac{(h(P)+B_S)}{(h(P)-B_S)}.\]
However, this statement immediately implies that $h(P)\leq 3B_S$, and the result follows.   
\end{proof} 
Since we are particularly interested in arithmetic aspects of right orbits for their relation to dynamical Galois groups (see Section \ref{sec:Galois}), we give a more explicit version of the height bounds in Theorem \ref{thm:escapepoints} for finite sets of unicritical maps. 
\begin{remark} If one is interested in trying to generalize known primitive prime divisor results to right iteration, especially those which are useful for understanding dynamical Galois groups \cite{Tucker,AvgZig, Riccati}, then one likely needs (among other things) a fairly refined understanding of the growth rates of heights in right orbits.   
\end{remark} 
\begin{corollary}{\label{cor:unicritescape}} Let $S=\{x^{d_1}+c_1, \dots, x^{d_s}+c_s\}$ for some $d_i\geq2$ and some $c_i\in\mathbb{Z}\mysetminus\{0\}$. Furthermore, assume that $P\in\mathbb{Q}$ satisfies 
\[ h((P^{d_i}+c_i)^{d_j}+c_j)\geq\max_i\{\log|2c|\}\;\;\;\;\; \text{for all $i,j$.}\] 
Then $P$ is an escape point for $S$. Therefore, if $S$ is equipped with the uniform measure and the $d_i$ are not all identical, then for all $\epsilon>0$ and almost every $\gamma\in\Phi_S$, there exists $N_{\gamma,P,\epsilon}$ such that \vspace{.15cm} 
\[(d_1d_2\dots d_s)^{\frac{n-(1+\epsilon)\log_{\delta}(e)\sigma\sqrt{2n\log\log n}}{s}}\leq h(\gamma_n^+(P))\leq (d_1d_2\dots d_s)^{\frac{n+(1+\epsilon)\log_\delta(e)\sigma\sqrt{2n\log\log n}}{s}} \vspace{.15cm}\]
for all $n\geq N_{\gamma,P,\epsilon}$.    
\end{corollary}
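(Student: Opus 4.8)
The plan is to prove the corollary in two steps: first show that the arithmetic hypothesis on $P$ forces $P$ to be an escape point for $S$ in the sense of Definition~\ref{def:escapepts}, and then specialize Theorem~\ref{thm:escapepoints} to this $S$, $\nu$, and $P$, translating its conclusions into the displayed two-sided bound exactly as in the paragraph following Theorem~\ref{thm:iteratedlogs}. For the first step, the key observation is that, by the definition of $M_{S,2}$, the $2$-fold compositions from $S$ are precisely the maps $g=\theta_1\circ\theta_2$ with $\theta_2=x^{d_i}+c_i$ and $\theta_1=x^{d_j}+c_j$; that is, $g(x)=(x^{d_i}+c_i)^{d_j}+c_j$ as $i,j$ range over $\{1,\dots,s\}$. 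Hence the hypothesis is exactly the assertion that $h(g(P))\ge\max_i\log|2c_i|$ for every $g\in M_{S,2}$, and it remains only to compare $\max_i\log|2c_i|$ with the threshold $B_S:=C_S/(d_S-1)$.

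To do this I would use the standard triangle-inequality bound $C(x^d+c)\le\log|2c|$ for $c\in\Z\mysetminus\{0\}$ (indeed even $\le\log(1+|c|)$): applying $h(\alpha\pm\beta)\le h(\alpha)+h(\beta)+\log 2$ to $(\alpha,\beta)=(P^d,c)$ and to $(\alpha,\beta)=(P^d+c,-c)$, together with $h(P^d)=d\,h(P)$ and $h(c)=\log|c|$, bounds $|h(P^d+c)-d\,h(P)|$ for all $P\in\mathbb{P}^1(\overline{\mathbb{Q}})$ (cf.\ the proof of \cite[Theorem~3.11]{SilvDyn}). Since $d_S=\min_i d_i\ge2$, this gives $B_S\le C_S=\max_i C(x^{d_i}+c_i)\le\max_i\log|2c_i|$, so $h(g(P))\ge B_S$ for all $g\in M_{S,2}$. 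Upgrading this to the strict inequality that Definition~\ref{def:escapepts} demands requires a brief case analysis: equality $h(g(P))=B_S$ forces every inequality above to be an equality, hence $d_S=2$ and $|c_i|=1$ for all $i$ (so $B_S=\log 2$), and then $P\in\mathbb{Q}$ together with a short argument on denominators shows that $g(P)$ is an integer with $|g(P)|=2$; applying any further $\theta=x^d+c\in S$ gives $|\theta(g(P))|=|(\pm2)^d\pm1|\ge 3$, so every element of $M_{S,3}$ sends $P$ to a point of height $>\log 2=B_S$. Either way, $P$ is an escape point for $S$ (of level $2$, or $3$ in this degenerate case).

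With $P$ an escape point, the second step is a direct application of Theorem~\ref{thm:escapepoints} with $\nu$ the uniform measure on $S$. Part~(1) gives $\delta:=\delta_{S,\nu}=\prod_{i=1}^s d_i^{1/s}=(d_1\cdots d_s)^{1/s}$, while $\sigma^2:=\sigma_{S,\nu}^2=\tfrac{1}{s}\sum_{i=1}^s(\log d_i-\log\delta)^2$ is the variance of $\log\deg$ on $(S,\nu)$, nonzero precisely because the $d_i$ are not all equal. Hence part~(3) of Theorem~\ref{thm:escapepoints} applies (in particular to right orbits), and rewriting its two identities as in the paragraph after Theorem~\ref{thm:iteratedlogs} --- converting the exponential into a power of $\delta$ via $e^{\,y}=\delta^{\,y\log_\delta e}$ --- shows that for every $\epsilon>0$ and $\bar\nu$-almost every $\gamma\in\Phi_S$ there is an $N_{\gamma,P,\epsilon}$ with
\[
\delta^{\,n-(1+\epsilon)\log_\delta(e)\,\sigma\sqrt{2n\log\log n}}\le h(\gamma_n^+(P))\le\delta^{\,n+(1+\epsilon)\log_\delta(e)\,\sigma\sqrt{2n\log\log n}}
\]
for all $n\ge N_{\gamma,P,\epsilon}$. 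Substituting $\delta=(d_1\cdots d_s)^{1/s}$, so that $\delta^{\,m}=(d_1\cdots d_s)^{m/s}$ for every exponent $m$, yields the two claimed bounds verbatim. The main --- and essentially only --- obstacle is the escape-point verification, and within it the boundary-case analysis that upgrades $h(g(P))\ge B_S$ to the strict inequality of Definition~\ref{def:escapepts}; everything downstream is the cited theorem together with the substitution for $\delta$.
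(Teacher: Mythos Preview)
Your proof is correct and follows essentially the same two-step approach as the paper: establish the height bound $|h(\phi(P))-d\,h(P)|\le\log|2c|$ for $\phi=x^d+c$ (the paper simply cites \cite[Lemma~12]{Ingram} rather than rederiving it via the triangle inequality), deduce $B_S\le C_S\le\max_i\log|2c_i|$ so the hypothesis makes $P$ an escape point of level~$2$, and then apply the main theorem together with $\delta_{S,\nu}=(d_1\cdots d_s)^{1/s}$. Two minor remarks: your citation of Theorem~\ref{thm:escapepoints} is in fact more accurate than the paper's citation of Theorem~\ref{thm:iteratedlogs}, since $P$ need not satisfy $h(P)>B_S$; and the paper does not address the boundary case $h(g(P))=B_S$ at all---your attempt to do so is a nice addition, though note that equality in the chain $B_S\le C_S\le\max_i\log|2c_i|$ only yields $d_S=2$ and $C_S=\max_i\log|2c_i|$, not directly that $|c_i|=1$ for every $i$.
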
 
\begin{remark}{\label{rmk:escape}} In particular, if $|c_i^{d_j}+c_j|\geq2\max\{|c_i|\}$, then $0$ is an escape point for $S$ and the height bounds in Corollary \ref{cor:unicritescape} hold for $P=0$; for an application to Galois theory, see Corollary \ref{cor:Galoisescp}. We also note that in practice, the condition on $P$ in Corollary \ref{cor:escapeptshtbds} holds for every rational point (for many sets $S$).      
\end{remark} 
\begin{proof} Let $\phi(x)=x^d+c$ for some $d\geq2$ and some $c\in\mathbb{Z}\mysetminus\{0\}$. Then it is straightforward to prove that  
\[|h(\phi(P))-dh(P)|\leq\log|2c|\;\;\;\;\;\;\;\text{for all $P\in\mathbb{Q}$;}\] 
see \cite[Lemma 12]{Ingram}. In particular, the set $S$ as in Corollary \ref{cor:unicritescape} is height controlled with height constants $C_S=\max\{\log|2c|\}$ and $d_S\geq2$. Moreover, the condition on $P$ implies that $P$ is an escape point for $S$ with escape level $r=2$; see Definition \ref{def:escapepts} above. The claim then follows from Theorem \ref{thm:iteratedlogs} part (3) and the fact that the dynamical degree $\delta_{S,\nu}=\prod d_i^{1/s}$ is a geometric mean of the degrees of the maps in $S$; see Theorem \ref{thm:iteratedlogs} part (1).  
\end{proof}
We now move on to study right iteration more carefully for more general initial points, including some points of small height. 
\begin{remark} For left iteration this analysis is accomplished by using canonical heights. In particular, several of our results on arithmetic and dynamical degrees above (for left iteration) hold for so called almost surely wandering points; see \cite[Theorem 1.5]{Me:dyndeg}. 
\end{remark} Here, the key assumption we make on the initial point $P$ is that it have infinite total orbit, i.e., the action of the entire monoid generated by $S$ on $P$ gives an infinite set; see (\ref{eq:totalorbit}) above. In particular, this condition is weaker than the assumption that $P$ be an escape point for $S$. In this case (and among other things), we prove that $\limsup h(\gamma_n^+(P))=\delta_{S,\nu}$ almost surely; compare to Theorems \ref{thm:rationalmaps} and \ref{thm:lawiterlogs} above. Moreover, this result holds for infinite height controlled sets of endomorphisms as well. For a statement of the following result, see Theorem \ref{thm:zero-one} from the Introduction.     
\begin{proof}[(Proof of Theorem \ref{thm:zero-one})] For statement (1), let $P\in\mathbb{P}^N(\overline{\mathbb{Q}})$ be any point. Note that if $P$ is fixed and the sequence $\gamma\in\Phi_S$ is allowed to vary, then Lemma \ref{lem:tate} implies that the height-degree quotient sequence $h(\gamma^+_n(P))/\deg(\gamma_n^+)$ is bounded by $h(P)\pm B_S$. Therefore, both 
\[\displaystyle{\liminf_{n\rightarrow\infty}\frac{h(\gamma_n^+(P))}{\deg(\gamma_n^+)}}\;\;\; \text{and}\;\;\;\displaystyle{\limsup_{n\rightarrow\infty}\frac{h(\gamma_n^+(P))}{\deg(\gamma_n^+)}}\] 
exist and are $h(P)+O(1)$ for all $P\in\mathbb{P}^N(\overline{\mathbb{Q}})$ and all $\gamma\in\Phi_S$.

For statement (2), suppose that all of the maps in $S$ are defined over a fixed number field $K$. Moreover, we assume (without loss of generality) that the initial point $P\in\mathbb{P}^N(K)$. In particular, Northcott's Theorem (over $K$) implies that if $\Orb_S(P)$ is infinite, then there exists $g\in M_S$ such that $h(g(P))>B_S$. On the other hand, the Infinite Monkey Theorem, a simple consequence of Borel-Cantelli \cite[pp. 96-100]{InfiniteMonkey}, implies that 
\[\gamma_n^+=f_{\gamma,n}\circ g \;\;\text{for some $f_{\gamma,n}\in M_S$ and infinitely many $n$}\] 
for almost every $\gamma\in\Phi_S$; that is, with probability $1$ the infinite sequence $\gamma$ contains the finite substring $g$ infinitely many times. In particular, for such $\gamma$ and $n$, the bound in Lemma \ref{lem:tate} applied to $Q=g(P)$ and $\rho=f_{\gamma,n}$ implies that  
\begin{equation}\label{bdas} 
\frac{h(\gamma^+_n(P))}{\deg(\gamma^+_n)}=\frac{h(f_{\gamma,n}(g(P)))}{\deg(f_{\gamma,n})\deg(g)}\geq\frac{1}{\deg(g)}\big(h(g(P))-B_S\big)>0.
\end{equation}
It follows that the limsup of the quotient $h(\gamma^+_n(P))/\deg(\gamma^+_n)$ must be strictly positive for almost every $\gamma\in\Phi_S$. Conversely, if the limsup of $h(\gamma^+_n(P))/\deg(\gamma^+_n)>0$ is positive for a single $\gamma$ (in particular, if it's true almost surely), then the right orbit $\Orb_\gamma^+(P)$ must be infinite. Therefore, the total orbit $\Orb_S(P)$ is infinite as well.  

Finally, statement (3). Let $P$ be any initial point and let $\gamma$ be any sequence. We first show that $\lim h(\gamma_n^+(P))^{1/n}\leq\delta_{S,\nu}$ almost surely. Note that for finite sets, this is known by Theorem \ref{thm:rationalmaps}; however, we wish to allow suitable infinite sets. To do this (and to ease notation), let
\begin{equation}\label{upperht}
\bar{h}_\gamma^+(P)=\limsup_{n\rightarrow\infty}\frac{h(\gamma_n^+(P))}{\deg(\gamma_n^+)}. 
\end{equation}   
Then by definition of $\limsup$ and Theorem \ref{thm:zero-one} part (1), we know that for all $\epsilon>0$ there is an $N_{P,\gamma,\epsilon}$ such that
\[\frac{h(\gamma^+_n(P))}{\deg(\gamma^+_n)}\leq(1+\epsilon)\bar{h}^+_\gamma(P)\]
holds for all $n>N_{P,\gamma,\epsilon}$. In particular,
\begin{equation}\label{arithdegbd1} 
h(\gamma^+_n(P))^{1/n}\leq(1+\epsilon)^{1/n}\,\bar{h}^+_\gamma(P)^{1/n}\,\deg(\gamma^+_n)^{1/n}
\end{equation} 
holds for such $n$. On the other hand, if $\Orb_S(P)$ is infinite, then $\bar{h}^+_\gamma(P)$ is positive almost surely by part (2) above. Likewise, if $\mathbb{E}_\nu[\log\deg(\phi)]$ exists, then Birkhoff's Ergodic Theorem \ref{birkhoff} (and an identical argument given for Theorem \ref{thm:iteratedlogs} part (1) above) implies that \vspace{.1cm}  
\[\displaystyle{\lim_{n\rightarrow\infty}\deg(\gamma^+_n)^{1/n}}=\displaystyle{\lim_{n\rightarrow\infty}\deg(\gamma^-_n)^{1/n}}=\delta_{S,\nu}=\exp\big(\mathbb{E}_\nu[\log\deg(\phi)]\big)\qquad\;\;\;\;\text{(almost surely)};\vspace{.1cm}\] 
alternatively, we can quote \cite[Theorem 1.5]{Me:dyndeg}. Therefore, if both $\Orb_S(P)$ is infinite and the quantity $\mathbb{E}_\nu[\log\deg(\phi)]$ exists, then the bound in (\ref{arithdegbd1}) implies that 
\[\limsup_{n\rightarrow\infty} h(\gamma_n^+(P))^{1/n}\leq\delta_{S,\nu}\] 
is true for almost every $\gamma\in\Phi_S$. For the reverse inequality, suppose that $\Orb_S(P)$ is infinite and the quantity $\mathbb{E}_\nu[\log\deg(\phi)]$ exists. Then by definition of $\bar{h}_\gamma^+(P)$, for all $0<\epsilon<1$ there exists an infinite sequence $\{n_k\}\subseteq\mathbb{N}$, depending on both $\epsilon$ and $\gamma$, such that \vspace{.1cm} 
\[\bar{h}^+_\gamma(P)(1-\epsilon)\leq \frac{h(\gamma_{n_k}^+(P))}{\deg(\gamma_{n_k}^+)}\] 
for all $n_k$. In particular, we see that \vspace{.1cm}  
\[\bar{h}_\gamma^+(P)^{1/n_k}(1-\epsilon)^{1/n_k}\deg(\gamma_{n_k}^+)^{1/n_k}\leq h(\gamma_{n_k}^+(P))^{1/n_k}.\] 
Therefore, it follows that \vspace{.05cm} 
\begin{equation}\label{ineq:reverse1}
\begin{split} 
\limsup_{n_k\rightarrow\infty}\Big(\bar{h}_\gamma^+(P)^{1/n_k}(1-\epsilon)^{1/n_k}\deg(\gamma_{n_k}^+)^{1/n_k}\Big) &\leq \limsup_{n_k\rightarrow\infty} h(\gamma_{n_k}^+(P))^{1/n_k} \\[3pt] 
&\leq \limsup_{n\rightarrow\infty} h(\gamma_{n}^+(P))^{1/n}.
\end{split} 
\end{equation} 
On the other hand, $\bar{h}_\gamma^+(P)$ is almost surely positive by part (2) of Theorem \ref{thm:zero-one} above. Hence,  
\begin{equation}\label{ineq:reverse2} 
\begin{split} 
\lim_{n_k\rightarrow\infty} \bar{h}_\gamma^+(P)^{1/n_k}=1&=\lim_{n_k\rightarrow\infty}(1-\epsilon)^{1/n_k}\\[3pt] 
\;\;\;\;\;\;\;\;\;\;\text{and}&\\[3pt]  
\lim_{n_k\rightarrow\infty}\deg(\gamma_{n_k}^+)^{1/n_k}=& \lim_{n\rightarrow\infty}\deg(\gamma_{n}^+)^{1/n}=\delta_{S,\nu}
\end{split} 
\end{equation}
almost surely. Therefore, (\ref{ineq:reverse1}) and (\ref{ineq:reverse2}) together imply that 
\[\delta_{S,\nu}\leq \limsup_{n\rightarrow\infty} h(\gamma_{n}^+(P))^{1/n}\] 
holds for almost every $\gamma\in\Phi_S$ as claimed. 
\end{proof}
\begin{remark} The liminf and limsup in Theorem \ref{thm:zero-one} part (1) can be distinct for initial points $P$ of small height, even if the total orbit of $P$ is infinite; see Example \ref{eg:left-right difference} above.  
\end{remark} 
We note the following consequence of Theorem \ref{thm:zero-one}, a sort of zero-one law for finite orbit points. In particular, the analogous statement fails for left iteration; see \cite[Example 1.10]{Me:dyndeg}.   
\begin{corollary} Let $S$ be a height controlled set of endomorphisms of $\mathbb{P}^N(\overline{\mathbb{Q}})$ all defined over a fixed number field $K$ and let $\nu$ be a discrete probability measure on $S$. Then for all $P\in\mathbb{P}^N(\overline{\mathbb{Q}})$, the probability that $\Orb_\gamma^+(P)$ is finite is either $0$ or $1$.  
\end{corollary}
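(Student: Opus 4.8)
The plan is to read this off from part (2) of Theorem \ref{thm:zero-one}. Fix $P\in\mathbb{P}^N(\overline{\mathbb{Q}})$ and dichotomize according to whether the total orbit $\Orb_S(P)$ is finite or infinite; I will show that the probability that $\Orb_\gamma^+(P)$ is finite is $1$ in the former case and $0$ in the latter.

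Suppose first that $\Orb_S(P)$ is infinite. Part (2) of Theorem \ref{thm:zero-one} then gives $0<\limsup_{n\to\infty} h(\gamma_n^+(P))/\deg(\gamma_n^+)$ for $\bar{\nu}$-almost every $\gamma\in\Phi_S$. For any such $\gamma$ the right orbit $\Orb_\gamma^+(P)$ must be infinite: since $S$ is height controlled, $\deg(\gamma_n^+)\geq d_S^{\,n}\to\infty$, so a positive limsup of the quotient forces $h(\gamma_n^+(P))\to\infty$ along a subsequence, and hence $\Orb_\gamma^+(P)$ contains infinitely many distinct points. (This is the same reasoning already used in the proof of Theorem \ref{thm:zero-one}.) Consequently the set of $\gamma$ for which $\Orb_\gamma^+(P)$ is finite is $\bar{\nu}$-null.

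Suppose instead that $\Orb_S(P)$ is finite. Then for every $\gamma\in\Phi_S$ and every $n\geq0$ the map $\gamma_n^+$ lies in $M_S$, so $\gamma_n^+(P)\in\Orb_S(P)$; thus $\Orb_\gamma^+(P)\subseteq\Orb_S(P)$ is finite for all $\gamma$, and the probability in question is $1$. Since the two cases are exhaustive, the probability that $\Orb_\gamma^+(P)$ is finite always lies in $\{0,1\}$. I do not expect a genuine obstacle here: all of the substance is packaged in Theorem \ref{thm:zero-one}, and the only additional ingredient is the elementary inclusion $\Orb_\gamma^+(P)\subseteq\Orb_S(P)$ that handles the finite total-orbit case.
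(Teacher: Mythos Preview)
Your proof is correct and follows essentially the same approach as the paper's: dichotomize on whether $\Orb_S(P)$ is finite or infinite, handle the finite case by the inclusion $\Orb_\gamma^+(P)\subseteq\Orb_S(P)$, and in the infinite case invoke part (2) of Theorem \ref{thm:zero-one} to conclude that a positive $\limsup$ of $h(\gamma_n^+(P))/\deg(\gamma_n^+)$ forces $\Orb_\gamma^+(P)$ to be infinite almost surely. The only difference is presentational---the paper states the infinite case via the contrapositive inclusion $\{\gamma:\Orb_\gamma^+(P)\text{ finite}\}\subseteq\Phi_S\setminus I_P$---but the substance is identical.
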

\begin{proof} 
Suppose that $\Orb_S(P)$ is finite. Then $\Orb^+_\gamma(P)\subseteq\Orb_S(P)$ is finite for all $\gamma\in\Phi_S$. In particular, the probability that $\Orb^+_\gamma(P)$ is finite is $1$. On the other hand, if $\Orb_S(P)$ is infinite, then part (2) of Theorem \ref{thm:zero-one} implies that 
\[I_P=\{\gamma\in\Phi_S\::\; \bar{h}^+_{\gamma}(P)>0\}\]
has full measure in $\Phi_S$; see \ref{upperht} for a definition of $\bar{h}^+_{\gamma}(P)$. On the other hand, it is clear that 
\[\{\gamma\in\Phi_S\::\; \text{$\Orb^+_\gamma(P)$ is finite}\}\subseteq \Phi_S\mysetminus I_P.\]
Therefore, the probability that $\Orb^+_\gamma(P)$ is finite is $0$. Hence, the probability that $\Orb^+_\gamma(P)$ is finite is either $0$ or $1$ as claimed. 
\end{proof}  
As a further application of Theorem \ref{thm:zero-one}, we record the following result for sets of quadratic polynomials with integral coefficients; see \cite{IJNT} for related work on sets of quadratic polynomials with rational coefficients.  
\begin{corollary}\label{cor:quad} Let $S=\{x^2+c_1,x^2+c_2,\dots,x^2+c_s\}$ for some distinct $c_i\in\mathbb{Z}$. If $s\geq3$, then 
\[0<\limsup_{n\rightarrow\infty}\frac{h(\gamma_n^+(P))}{\deg(\gamma_n^+)}\qquad\text{(almost surely)}\] for all $P\in\mathbb{Q}$ (independent of the choice of $\nu$).   
\end{corollary}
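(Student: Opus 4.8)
The plan is to reduce the statement, via part (2) of Theorem \ref{thm:zero-one}, to the purely arithmetic assertion that $\Orb_S(P)$ is infinite for every $P\in\mathbb{Q}$. Indeed, $S$ is height controlled: one has $|h(x^2+c)-2h(x)|\le\log|2c|$ for all rational $x$, so $d_S=2$ and $C_S=\max_i\log|2c_i|<\infty$, and every map in $S$ is defined over $\mathbb{Q}$, so Theorem \ref{thm:zero-one} applies. Its part (2) says that $0<\limsup_{n\to\infty}h(\gamma_n^+(P))/\deg(\gamma_n^+)$ holds $\bar\nu$-almost surely precisely when $\Orb_S(P)$ is infinite. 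Hence the whole content of the corollary is the claim: if $s\ge3$, then no $P\in\mathbb{Q}$ has finite total orbit.

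So suppose, for contradiction, that $\mathcal{O}:=\Orb_S(P)$ is finite, and write $\phi_i:=x^2+c_i$. First I would show $\mathcal{O}\subseteq\mathbb{Z}$. If $Q=u/v\in\mathcal{O}$ in lowest terms with $v\ge2$, then $\phi_i(Q)=(u^2+c_iv^2)/v^2$ is again in lowest terms, since any prime dividing $v$ divides $c_iv^2$ but not $u^2$; thus $\phi_i(Q)$ has denominator exactly $v^2$, and iterating the single map $\phi_i$ produces elements with denominators $v,v^2,v^4,\dots$, contradicting finiteness of $\mathcal{O}\supseteq\{\phi_i^{\,n}(Q):n\ge0\}$. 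So $P\in\mathbb{Z}$ and $\mathcal{O}$ is a finite set of integers closed under each $\phi_i$. Setting $A:=\{Q^2:Q\in\mathcal{O}\}$, closedness gives $\mathcal{O}\supseteq\phi_1(\mathcal{O})\cup\phi_2(\mathcal{O})\cup\phi_3(\mathcal{O})=A+\{c_1,c_2,c_3\}$. Note also $|\mathcal{O}|\ge3$, since for any $Q\in\mathcal{O}$ the values $\phi_1(Q),\phi_2(Q),\phi_3(Q)$ are distinct (the $c_i$ are distinct) and lie in $\mathcal{O}$.

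Next I would pin down the shape of $\mathcal{O}$. Let $m=\max_{Q\in\mathcal{O}}|Q|$ and $\mu=\min_{Q\in\mathcal{O}}|Q|$, so $m^2=\max A$ and $\mu^2=\min A$. Since $m^2+c_i$ and $\mu^2+c_i$ lie in $\mathcal{O}\subseteq[-m,m]$, we get $c_i\in[-m-\mu^2,\;m-m^2]$ for every $i$, and for this interval to be nonempty we need $\mu^2\ge m^2-2m$, which for $m\ge3$ forces $\mu\ge m-1$. As $|\mathcal{O}|\ge3$ rules out $\mathcal{O}\subseteq\{-m,m\}$, we must have $\mu=m-1$; hence every element of $\mathcal{O}$ has absolute value in $\{m-1,m\}$, so $A=\{(m-1)^2,m^2\}$ and $\mathcal{O}\subseteq\{\pm(m-1),\pm m\}$. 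By the elementary sumset bound $|X+Y|\ge|X|+|Y|-1$ over $\mathbb{Z}$ we have $|A+\{c_1,c_2,c_3\}|\ge2+3-1=4$, so $|\mathcal{O}|=4$, $\mathcal{O}=\{\pm(m-1),\pm m\}$, and $A+\{c_1,c_2,c_3\}=\mathcal{O}$. Now comes the contradiction: each $A+c_i=\{(m-1)^2+c_i,\;m^2+c_i\}$ is a two-element subset of $\mathcal{O}$ whose two elements differ by $m^2-(m-1)^2=2m-1$, and these three subsets are pairwise distinct because the $c_i$ are; but $\{\pm(m-1),\pm m\}$ has only two two-element subsets whose elements differ by $2m-1$, namely $\{-m,\,m-1\}$ and $\{-(m-1),\,m\}$ (the six pairwise differences being $1,\,1,\,2m-2,\,2m-1,\,2m-1,\,2m$). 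This settles $m\ge3$; the remaining cases $m\le2$ (so $\mathcal{O}\subseteq\{-2,-1,0,1,2\}$) are disposed of by direct inspection, since the requirements $Q^2+c_i\in\mathcal{O}$ for all $Q\in\mathcal{O}$ already leave at most two possible values for the $c_i$, contradicting $s\ge3$.

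I expect the main obstacle to be bookkeeping rather than any deep input: one must check carefully that the interval constraint on the $c_i$ really does collapse $\mathcal{O}$ onto $\{\pm(m-1),\pm m\}$ when $m\ge3$, and carry out the finite case check for $m\le2$; the only external facts used are the trivial sumset inequality and part (2) of Theorem \ref{thm:zero-one}. It is worth remarking that the bounded-height estimate furnished by Lemma \ref{lem:tate} (applied to the maximal-height point of a finite orbit) gives an alternative proof that $\mathcal{O}$ must be small, which could replace the interval argument above if one prefers.
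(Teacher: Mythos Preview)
Your reduction is exactly the one the paper uses: apply part (2) of Theorem \ref{thm:zero-one} (the set $S$ is finite, hence height controlled, and defined over $\mathbb{Q}$) to reduce the corollary to the assertion that $\Orb_S(P)$ is infinite for every $P\in\mathbb{Q}$ when $s\ge 3$. The difference lies in how that arithmetic input is obtained. The paper does not prove it here; it simply cites \cite[Corollary 1.2]{IJNT}. You instead give a self-contained elementary argument: first force $\mathcal{O}\subseteq\mathbb{Z}$ via the denominator growth, then use the interval constraint $c_i\in[-m-\mu^2,\,m-m^2]$ to collapse $\mathcal{O}$ into $\{\pm(m-1),\pm m\}$ when $m\ge 3$, and finally count pairs in $\mathcal{O}$ with difference $2m-1$ to obtain the contradiction (with the cases $m\le 2$ handled by the same interval bound, which leaves at most two admissible $c_i$). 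This argument is correct as written; the only places to be slightly careful are that the interval must contain at least three integers (you use only nonemptiness, but that already suffices to force $\mu\ge m-1$ for $m\ge 3$), and that your height bound $|h(x^2+c)-2h(x)|\le\log|2c|$ tacitly assumes $c\neq 0$, though for the reduction you only need that a finite set of degree $\ge 2$ morphisms is height controlled, which the paper states directly.

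What your approach buys is self-containment: the corollary no longer depends on an external reference, and the mechanism (a finite $S$-invariant set of integers cannot absorb three distinct translates of its square set) is made transparent. What the citation buys is brevity and, presumably, greater generality—\cite{IJNT} treats quadratic polynomials with rational coefficients and likely gives a full classification of finite total orbits, whereas your argument is tailored to integral $c_i$ and $s\ge 3$.
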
 
\begin{proof} Combine Theorem \ref{thm:zero-one} part (2) with \cite[Corollary 1.2]{IJNT}.   
\end{proof} 
Finally, we apply Theorem \ref{thm:zero-one} to the height counting problem in orbits; compare to similar results in \cite[Corollary 1.16]{Me:dyndeg} and Corollary \ref{cor:escapeptshtbds} above. However, without further conditions on the initial point $P$, we can only give lower bounds.  
\begin{corollary}{\label{cor:orbitcount}} Let $S$ be a height controlled set of endomorphisms of $\mathbb{P}^N(\overline{\mathbb{Q}})$ all defined over a fixed number field $K$ and let $\nu$ be a discrete probability measure on $S$. Moreover, suppose the following conditions hold: \vspace{.1cm} 
\begin{enumerate}
\item $\mathbb{E}_\nu[\log\deg(\phi)]$ exists. \vspace{.1cm} 
\item $\Orb_S(P)$ is infinite. \vspace{.15cm} 
\end{enumerate}
Then 
\[\frac{1}{\mathbb{E}_\nu[\log\deg(\phi)]}\leq\liminf_{B\rightarrow\infty}\;\frac{\#\{n\geq0\,:\, h(\gamma_n^+(P))\leq B\}}{\log B}\vspace{.15cm}\]  
for almost every $\gamma\in\Phi_S$.  
\end{corollary}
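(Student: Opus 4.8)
The plan is to extract the lower bound on the counting function purely from the \emph{upper} bound on heights $h(\gamma_n^+(P))\le \deg(\gamma_n^+)\bigl(h(P)+B_S\bigr)$ supplied by Lemma \ref{lem:tate}, together with the almost sure degree asymptotic $\deg(\gamma_n^+)^{1/n}\to\delta_{S,\nu}$. The matching \emph{upper} bound on the count, as in Corollary \ref{cor:escapeptshtbds}, would require a lower bound on heights of the form $h(\gamma_n^+(P))\gg\deg(\gamma_n^+)$, and that is precisely what can fail in the absence of an escape-point hypothesis (compare Example \ref{eg:left-right difference}); hence only the inequality is available here.

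First I would record that, since $S$ is height controlled, $d_S\ge 2$, so $\mathbb{E}_\nu[\log\deg(\phi)]\ge\log 2>0$ and $\delta_{S,\nu}=\exp\bigl(\mathbb{E}_\nu[\log\deg(\phi)]\bigr)>1$; in particular every logarithm appearing below is positive. Next, arguing exactly as in the proof of Theorem \ref{thm:zero-one} (equivalently, of Theorem \ref{thm:escapepoints} part (1)) — apply Birkhoff's Ergodic Theorem \ref{birk} to $f_1(\gamma)=\log\deg(\theta_1)$ and use that the degree is multiplicative on compositions of endomorphisms, so that $\log\deg(\gamma_n^+)=\sum_{j=0}^{n-1}f_1\circ T^j(\gamma)$ — one obtains
\[\lim_{n\to\infty}\deg(\gamma_n^+)^{1/n}=\exp\bigl(\mathbb{E}_\nu[\log\deg(\phi)]\bigr)=\delta_{S,\nu}\]
for $\bar\nu$-almost every $\gamma\in\Phi_S$. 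This step uses only assumption (1) and is valid for infinite height-controlled $S$.

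Now fix such a $\gamma$ and an $\epsilon>0$, and choose $N=N(\gamma,\epsilon)$ with $\deg(\gamma_n^+)\le\bigl((1+\epsilon)\delta_{S,\nu}\bigr)^n$ for all $n\ge N$. Combining with Lemma \ref{lem:tate} gives $h(\gamma_n^+(P))\le(h(P)+B_S)\bigl((1+\epsilon)\delta_{S,\nu}\bigr)^n$ for $n\ge N$, whence for every $B$,
\[\bigl\{\,n\ge N:\ (h(P)+B_S)\bigl((1+\epsilon)\delta_{S,\nu}\bigr)^n\le B\,\bigr\}\ \subseteq\ \bigl\{\,n\ge 0:\ h(\gamma_n^+(P))\le B\,\bigr\}.\]
The set on the left is an interval of integers of cardinality at least $\dfrac{\log B-\log(h(P)+B_S)}{\log\bigl((1+\epsilon)\delta_{S,\nu}\bigr)}-N$, so dividing by $\log B$ and letting $B\to\infty$ gives
\[\liminf_{B\to\infty}\frac{\#\{n\ge 0:\ h(\gamma_n^+(P))\le B\}}{\log B}\ \ge\ \frac{1}{\log\bigl((1+\epsilon)\delta_{S,\nu}\bigr)}.\]
Since $\epsilon>0$ was arbitrary, letting $\epsilon\to 0^+$ and recalling $\log\delta_{S,\nu}=\mathbb{E}_\nu[\log\deg(\phi)]$ yields the claim for $\bar\nu$-almost every $\gamma$.

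There is no genuinely hard step: the core is the standard "counting integer points below an exponentially growing curve" computation, once Lemma \ref{lem:tate} and the almost sure degree growth are in place. The only subtlety is ensuring the degree asymptotic holds for possibly infinite height-controlled $S$ rather than just finite $S$; but this is already contained in the proof of Theorem \ref{thm:zero-one}, so it may simply be cited. I would also remark that assumption (2), that $\Orb_S(P)$ is infinite, is not actually used in this direction — the height upper bound, hence the count lower bound, holds for every $P$ — and is retained only to match the hypotheses of the companion results and because the estimate is of interest precisely when orbits are large.
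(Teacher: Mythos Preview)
Your proof is correct and follows precisely the approach the paper indicates (the paper suppresses the proof, referring to its similarity to Corollary \ref{cor:escapeptshtbds}): obtain $h(\gamma_n^+(P))^{1/n}\le(1+\epsilon)\delta_{S,\nu}$ for large $n$, convert this to the subset relation $\{n\ge N:((1+\epsilon)\delta_{S,\nu})^n\le B\}\subseteq\{n\ge0:h(\gamma_n^+(P))\le B\}$, count, divide by $\log B$, and let $\epsilon\to0$. Your observation that hypothesis (2) is not actually needed for this inequality is correct; the paper's implicit route through Theorem \ref{thm:zero-one}(3) invokes it, but your direct appeal to Lemma \ref{lem:tate} plus Birkhoff shows it is superfluous for the lower bound on the count.
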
  
We suppress the proof of Corollary \ref{cor:orbitcount} due to its similarity to Corollary \ref{cor:escapeptshtbds} above. 

\section{Height counting in total orbits}\label{sec:totalorbits}
We now turn briefly to the height counting problem for total orbits from the Introduction. However, the reader should bear in mind that the work in this section is preliminary. Nevertheless, we include it to motivate future work; for instance, we shall see how this problem relates to growth rates in semigroups and lattice point counting in various domains. As a reminder, if $P\in\mathbb{P}^N(\overline{\mathbb{Q}})$ is fixed, then our overall goal is to understand the asymptotic size of the set of points in the total orbit of $P$ of height at most $B$,    
\[\{Q\in\Orb_S(P):\, h(Q)\leq B\},\]
as $B$ grows. However, at the moment this problems seems quite difficult (since distinct functions can agree on subvarieties), and we instead study the asymptotic size of the related set of functions 
\begin{equation}\label{monoidcount}
\{f\in M_S:\, h(f(P))\leq B\},
\end{equation} in hopes that this count will shed light on the number of points in $\Orb_S(P)$ of bounded height. The basic idea, consistent with our work on orbits coming from sequences, is that the height of a point $f(P)\in \Orb_S(P)$ is roughly determined by the size of $\deg(f)$, as long as the initial point $P$ is sufficiently generic; see Lemma \ref{lem:escapept} With this in mind, to count the number of functions $f\in M_S$ with $h(f(P))\leq B$, we should in some sense simply be counting the number of $f$'s of bounded degree. Moreover, when $M_S$ is (in a nice way) generated by a set of morphisms, this problem may be tractable. 

To make this heuristic precise, we briefly discuss weighted lengths on monoids. Let $M$ be a monoid generated by a finite set $S=\{\phi_1,\dots,\phi_s\}$ and let $c=(c_1,\dots,c_s)\in\mathbb{R}^s$ be a vector of positive weights. Then we define the \emph{weighted length} $\mathit{l}_{S,c}(f)$ of any $f\in M$ as follows. First let $\Sigma(S)$ be the free monoid generated by $S$ (i.e., $\Sigma(S)$ is the set of all words in the alphabet $S$) and define $\mathit{l}_{S,c}(\phi_i)=c_i$. Then extend $\mathit{l}_{S,c}$ to any word $\sigma\in\Sigma(S)$ by setting $\mathit{l}_{S,c}(\sigma)=\mathit{l}_{S,c}(s_1)+\dots+\mathit{l}_{S,c}(s_k)$ whenever $\sigma=s_1\cdots s_k$ and $s_i\in S$. Finally, for $f\in M$ we define $\mathit{l}_{S,c}(f)$ to be 
\[\mathit{l}_{S,c}(f):=\inf\big\{\mathit{l}_{S,c}(\sigma): \sigma\in\Sigma(S)\;\text{and $\sigma$ represents $f$}\big\}.\] 
Moreover, given a notion of length, one can study the growth function $g_{S,c}:\mathbb{R}\rightarrow\mathbb{N}$ given by 
\begin{equation}\label{growth} 
g_{S,c}(B):=\#\{f\in M\,:\, \mathit{l}_{S,c}(f)\leq B\}.
\end{equation} 
In particular, the growth rate of $g_{S,c}$ may be used to encode information about the Monoid $M$ and the generating set $S$. 
\begin{remark} 
Historically, most of the work on this problem has focused on the case when $M$ is a group and each $c_i=1$ (with some additional work on the case when $c_i\in\mathbb{N}$ also); see \cite{growth1,growth2}. However, the relevant definitions make sense for $c_i\in\mathbb{R}_{>0}$ and monoids, and this is the situation that arises most naturally in our work here.  
\end{remark} 
Back to dynamics. Let $S=\{\phi_1, \dots,\phi_s\}$ be a finite set of endomorphisms on $\mathbb{P}^N$ all of degree at least $2$, let $c_i=\log\deg(\phi_i)$, and define $\mathit{l}(f):=\log\deg(f)$ for all $f\in M_S$. Then it is straightforward to check that $\mathit{l}(f)=\mathit{l}_{S,c}(f)$ independent of $S$ (the degree of a composite morphism is the product of the degrees of its components, and the degree of a function is intrinsic, i.e., does not depend on how it is written as a composition of other functions). 

Now suppose that $P\in\mathbb{P}^N(\overline{\mathbb{Q}})$ is such that $h(P)>B_S:=C_S/(d_S-1)$; here $C_S$ and $d_S$ are the constants from Definition \ref{def:htcontrolled} above. Then, Tate's telescoping Lemma \ref{lem:tate} implies that \vspace{.1cm}
\[\deg(f)(h(P)-B_S)\leq h(f(P))\leq\deg(f)(h(P)+B_S).\vspace{.1cm}\] 
Therefore, for all $B$ we have the subset relations: \vspace{.1cm}
\begin{equation}\label{subset}
\Scale[.835]{\;\,\bigg\{f\in M_S\,:\,\mathit{l}(f)\leq \log\big(\frac{B}{h(P)+B_S}\big)\bigg\}\subseteq \big\{f\in M_S:\, h(f(P))\leq B\big\}\subseteq\bigg\{f\in M_S\,:\,\mathit{l}(f)\leq \log\big(\frac{B}{h(P)-B_S}\big)\bigg\}}. \vspace{.1cm} 
\end{equation} 
In particular, (\ref{growth}) and (\ref{subset}) imply that
\begin{equation}\label{ht-wt} 
\{f\in M_S:\, h(f(P))\leq B\}\sim g_{S,c}(\log\,B)
\end{equation} 
as $B$ tends to infinity. 

As an application, we consider the case when $S$ is a free basis of the commutative monoid $M_S$ (as an example, one may take $S=\{x^{d_1}, \dots, x^{d_s}\}$ where the $d_i\in\mathbb{N}$ are multiplicatively independent). In this case, $M_S\cong \mathbb{N}^s$ with the operation of coordinate addition, and it is straightforward to check that 
\[g_{S,c}(B')=\#\{(e_1,\dots, e_s)\in\mathbb{N}^s\,:\,e_1c_1+e_2c_2+\dots+e_sc_s\leq B'\}.\] 
However, this is evidently a count of the number of lattice points in a dilate of the bounded, Jordon measurable region 
\[\Omega=\{(x_1,\dots, x_s)\in\mathbb{R}^s\,:\, 0\leq x_i\;\text{and}\; x_1c_1+\dots+x_sc_s\leq 1\}.\] 
In particular, since the volume of $\Omega$ is $(s!c_1c_2\dots c_s)^{-1}$ it follows that 
\[g_{S,c}(B')\sim (s!c_1c_2\dots c_s)^{-1} (B')^s\] 
as $B'$ tends to infinity; see, for instance, \cite[Theorem 12.2]{Pollack}. Letting $B'=\log(B)$, we deduce from (\ref{ht-wt}) that 
\[\lim_{B\rightarrow\infty}\frac{\#\Big\{f\in M_S\,:\,h\big(f(P)\big)\leq B\Big\}}{(\log B)^s}=\frac{1}{s!\cdot\prod_{i=1}^s\log\deg(\phi_i)}\]
as claimed in the Introduction. However, it seems that generically $M_S$ is a free (non-commutative) monoid, and there appears to be little (precise) information known about the growth rate function $g_{S,c}$ in this case, limiting what we can say about the dynamics. 
\begin{remark} When $M_S$ is a free non-commutative monoid (with basis $S$) and $c_i\in\mathbb{N}$, then $g_{S,c}(B)$ is a sum over restricted compositions of integers $n\leq B$; see \cite[\S2]{compositions}. In particular, one may be able to use the associated generating function to obtain an asymptotic for $g_{S,c}(B)$ in this case. However, the weights coming from dynamics are never integers (they are logs of integers). Nevertheless, since we are mainly interested in asymptotics for (\ref{monoidcount}), it is possible that the integer weight case could provide sufficient information to answer the general case.     
\end{remark}                          
\section{Galois groups generated by multiple unicritical polynomials}{\label{sec:Galois}}
We now discuss the relation between the arithmetic of right orbits and certain dynamical Galois groups. Many of the results in this  section are straightforward adaptations of analogous results for constant sequences (i.e., iterating one function); see, for instance, \cite{Jones} and \cite{Jones-Survey}. For additional work on Galois groups generated by iterating multiple maps, see \cite{Ferraguti}.    

We begin with some notation. Let $K$ be a field of characteristic $0$, and let $S$ be a set of polynomials over $K$. Then given an infinite sequence $\gamma\in\Phi_S$, we can form a tower of Galois extensions $K_{\gamma,n}:=K(\gamma_n^+)$ for $n\geq0$; here $K(\gamma_n^+)$ denotes the splitting field of the equation $\gamma_n^+(x)=0$ in a fixed algebraic closure $\overline{K}$. We note that the direction of iteration is crucial to create nested extensions: 
\[K\subseteq K_{\gamma,1}\subseteq\dots \subseteq K_{\gamma,n}\,.\]      
As in the case of iterating a single function (under some separability assumptions), the Galois group $G_{\gamma,n}:=\Gal(K_{\gamma,n}/K)$ acts naturally on the corresponding truncated \emph{preimage tree} with vertices      
\[T_{\gamma,n}:=\big\{\alpha\in\overline{K}\,:\,\gamma_m^+(\alpha)=0\; \text{for some $1\leq m\leq n$}\big\}\]
and edge relation: if $\gamma_m^+(\alpha)=0$ for some $1\leq m\leq n$ and $\gamma_m^+=\theta_1\circ\dots\circ \theta_m$, then there is an edge between $\alpha$ and $\theta_m(\alpha)$. Likewise, the inverse limit of Galois groups $\displaystyle{G_\gamma:=\lim_{\leftarrow} G_{\gamma,n}}$ acts continuously on the complete preimage tree $\displaystyle{T_\gamma=\cup_{n\geq1} T_{\gamma,n}}$ and we obtain an embedding,    
\[G_{\gamma,K}\leq \Aut(T_\gamma),\]
called the \emph{arboreal representation} of $\gamma$; see \cite[\S2]{Ferraguti} for more details. In particular, in light of our probabilistic approach in this paper and the recent finite index theorems and conjectures in \cite{Bridy-Tucker,Jones-Survey}, we pose the following question. 
\begin{question}\label{question:Galois} Let $\nu$ be a probability measure on $S$. Under what assumptions on the polynomials in $S$ can we conclude that 
\[\bar{\nu}\Big(\big\{\gamma\in\Phi_S\,:\,[\Aut(T_\gamma):G_{\gamma,K}]<\infty\big\}\Big)>0?\] 
That is, when are the arboreal representations above finite index subgroups with positive probability?  
\end{question} 
As a first step in understanding this problem, we simplify the setup substantially. Let $S$ be set of unicritical polynomials with a common critical point $c\in K$, that is
\begin{equation}\label{unicrit}
S=\big\{a(x-c)^{d}+b:\,a,b\in K, a\neq0, d\geq2\big\}. 
\end{equation}  
\begin{remark} In practice, especially given our work on heights in the previous sections, we usually restrict ourselves to finite subsets of (\ref{unicrit}). However, for completeness, we keep the Galois theory results in this section as general as possible.    
\end{remark}
In particular, if $K$ is a global field and $S$ is a set of polynomials as in (\ref{unicrit}), then we can restrict the ramification of the extensions $K_{\gamma,n}/K$ to the primes dividing elements of the \emph{critical orbits} $\Orb_\gamma^+(c)$ and the primes dividing the leading coefficients or degrees of the polynomials $\gamma_m^+$ for some $1\leq m\leq n$; compare to \cite[Lemma 2.6]{Jones}. In what follows, we use the shorthand $\ell(f)$ and $d(f)$ for the leading term and degree respectively of a polynomial $f\in K[x]$. Moreover, because this section is entirely devoted to right iteration, we (at times) drop the superscript $+$ and simply write $\gamma_n$ for $\gamma_n^+$ when convenient.  
\begin{proposition}\label{prop:discriminant} Let $S$ be a set of polynomials as in (\ref{unicrit}). Moreover, given $\gamma=(\theta_i)_{i=1}^\infty\in\Phi_S$ and $n\geq0$, let $\ell_{\gamma,n}$, $d_{\gamma,n}$, and $\Delta_{\gamma,n}$  be the leading term, the degree, and the discriminant of $\gamma_n^+$ respectively. Then 
\[\Delta_{\gamma,n}=\pm\, d(\theta_n)^{d_{\gamma,n}}\cdot\ell_{\gamma,n-1}^{\,d(\theta_n)-1}\cdot\ell(\theta_n)^{d_{\gamma,n-1}(d_{\gamma,n}-1)}\cdot\gamma_n^+(c)\cdot\Delta_{\gamma,n-1}^{d(\theta_n)}\]
for all $n\geq1$.       
\end{proposition}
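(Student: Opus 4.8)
The plan is to argue by induction on $n$, using the factorization $\gamma_n^+=\gamma_{n-1}^+\circ\theta_n$ together with the resultant description of the discriminant: for $h\in K[x]$ of degree $N$ with leading coefficient $\ell(h)$ one has $\disc(h)=(-1)^{\binom{N}{2}}\ell(h)^{-1}\operatorname{Res}(h,h')$. Writing $f:=\gamma_{n-1}^+$ and $g:=\theta_n$, the chain rule gives $(\gamma_n^+)'=(f'\circ g)\cdot g'$, so by multiplicativity of the resultant in its second variable
\[
\operatorname{Res}\!\big(\gamma_n^+,(\gamma_n^+)'\big)=\operatorname{Res}\!\big(\gamma_n^+,f'\circ g\big)\cdot\operatorname{Res}\!\big(\gamma_n^+,g'\big).
\]
I would evaluate the two factors separately and then reassemble, using throughout that $\ell_{\gamma,n}=\ell_{\gamma,n-1}\cdot\ell(\theta_n)^{d_{\gamma,n-1}}$ and $d_{\gamma,n}=d_{\gamma,n-1}\cdot d(\theta_n)$.

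For the first factor I would factor $f(y)=\ell_{\gamma,n-1}\prod_i(y-\alpha_i)$ over $\overline{K}$. The roots of $\gamma_n^+=f\circ g$ are precisely the $g$-preimages of the $\alpha_i$, and at a root $\beta$ with $g(\beta)=\alpha_i$ one has $(f'\circ g)(\beta)=f'(\alpha_i)$; hence $\operatorname{Res}(\gamma_n^+,f'\circ g)=\ell_{\gamma,n}^{\,(d_{\gamma,n-1}-1)d(\theta_n)}\prod_i f'(\alpha_i)^{d(\theta_n)}$. Rewriting $\prod_i f'(\alpha_i)$ in terms of $\operatorname{Res}(f,f')=\ell_{\gamma,n-1}^{\,d_{\gamma,n-1}-1}\prod_i f'(\alpha_i)$, and hence in terms of $\Delta_{\gamma,n-1}=\disc(\gamma_{n-1}^+)$, produces the factor $\Delta_{\gamma,n-1}^{d(\theta_n)}$ together with all the accompanying powers of $\ell_{\gamma,n-1}$ and $\ell(\theta_n)$.

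For the second factor I would invoke the defining property of $S$: because $g=\theta_n$ is unicritical with critical point $c$ and $\car K=0$, we have $g'(x)=d(\theta_n)\,\ell(\theta_n)\,(x-c)^{d(\theta_n)-1}$, that is, a nonzero scalar times a pure power of $(x-c)$. Consequently $\operatorname{Res}(\gamma_n^+,g')$ equals, up to sign, $\big(d(\theta_n)\,\ell(\theta_n)\big)^{d_{\gamma,n}}$ times a power of $\gamma_n^+(c)$, where $\gamma_n^+(c)=f(g(c))=\gamma_{n-1}^+(\theta_n(c))$ since $\theta_n(c)$ is the value of $\theta_n$ at its critical point. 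This is where the $d(\theta_n)^{d_{\gamma,n}}$ and $\gamma_n^+(c)$ factors of the formula come from.

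Multiplying the two resultants, dividing by $\ell_{\gamma,n}$, collecting exponents and absorbing all signs into the ambiguous $\pm$ then yields the stated identity; the base case $n=1$ reduces to the classical discriminant $\disc\!\big(a(x-c)^{d}+b\big)$. The obstacle I anticipate is not conceptual but bookkeeping: one must carefully track the exponents of $\ell_{\gamma,n-1}$, $\ell(\theta_n)$ and $\gamma_n^+(c)$ coming from the three sources $\ell_{\gamma,n}$, $\operatorname{Res}(\gamma_n^+,f'\circ g)$ and $\operatorname{Res}(\gamma_n^+,g')$, and check that they combine into exactly $\ell_{\gamma,n-1}^{\,d(\theta_n)-1}$ and $\ell(\theta_n)^{d_{\gamma,n-1}(d_{\gamma,n}-1)}$. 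A secondary point is that the root-theoretic identities above are a priori valid only when the polynomials involved are separable, but since each is an identity of polynomials in the coefficients it extends to the general case; and in any event only the separable case is needed for the Galois-theoretic applications of this section.
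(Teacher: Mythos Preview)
Your approach is correct and essentially the same as the paper's: both split $\Res(\gamma_n^+,(\gamma_n^+)')$ via the chain rule $(\gamma_n^+)'=(f'\circ g)\cdot g'$ and multiplicativity of the resultant, then handle the $g'$-factor using that $\theta_n'$ is a scalar times $(x-c)^{d(\theta_n)-1}$. The only cosmetic difference is that where you compute $\Res(\gamma_n^+,f'\circ g)$ directly by passing to roots of $f$ in $\overline{K}$, the paper packages the same calculation into the identity $\Res(h_1\circ h_2,h_1'\circ h_2)=\ell(h_2)^{(d(h_1)^2-d(h_1))d(h_2)}\Res(h_1,h_1')^{d(h_2)}$; also, no induction is actually needed, since the formula relates $\Delta_{\gamma,n}$ to $\Delta_{\gamma,n-1}$ directly for each $n$.
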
 
\begin{proof} We begin with a few well known facts about discriminants and resultants; see, for instance, \cite[IV \S8]{Lang}. Let $h_1, h_2, h_3 \in K[x]$ be nonconstant polynomials. Then the resultant $\Res(h_1,h_2)$ of $h_1$ and $h_2$ is given by 
\begin{equation}\label{resultant}
\Res(h_1,h_2)=\ell(h_1)^{d(h_2)}\prod_{h_1(\alpha)=0}h_2(\alpha),
\end{equation} 
where the product above is taken over roots $\alpha\in\overline{K}$ of $h_1$ with multiplicity. Then the discriminant $\Delta(h_1)$ of $h_1$ satisfies \vspace{.1cm}
\begin{equation}\label{discriminant1} 
\Res(h_1,h_1')=(-1)^{d(h_1)(d(h_1)-1)/2}\ell(h_1)\Delta(h_1).\vspace{.1cm}
\end{equation} 
In particular, it is straightforward to check that $\Res(h_1,h_2)=(-1)^{d(h_1)d(h_2)}\Res(h_2,h_1)$, that $\Res(h_1h_2,h_3)=\Res(h_1,h_3)\Res(h_2,h_3)$, and that \vspace{.1cm}
\begin{equation}\label{discriminant2}
\Res(h_1\circ h_2, h_1'\circ h_2)=\ell(h_2)^{(d(h_1)^2-d(h_1))d(h_2)}\Res(h_1,h_1')^{d(h_2)}. \vspace{.1cm}
\end{equation}
We now apply these facts to the discriminants in Proposition \ref{prop:discriminant}. Specifically, it follows from (\ref{discriminant1}) and that $\gamma_n^+=\gamma_{n-1}^+\circ\theta_n$ that \vspace{.1cm}
\begin{equation}\label{discriminant3} 
\frac{\Delta_{\gamma,n}}{\Delta_{\gamma,n-1}^{d(\theta_n)}}=\pm\frac{\ell_{\gamma,n-1}^{\,d(\theta_n)}}{\ell_{\gamma,n}}\cdot \frac{\Res(\gamma_n,\gamma_n')}{\Res(\gamma_{n-1},\gamma_{n-1}')^{d(\theta_n)}}; \vspace{.1cm} 
\end{equation} 
here we have dropped the superscript $+$ to avoid overly cumbersome notation. On the other hand, the chain rule implies that $\gamma_n'=(\gamma_{n-1}'\circ\theta_n)\cdot\theta_n'$. In particular, the standard resultant facts above together with (\ref{discriminant2}) imply that \vspace{.1cm}
\begin{equation}\label{discriminant4}
\begin{split} 
\Res(\gamma_n,\gamma_n')=&\pm \Res(\gamma_n',\gamma_n)\\[3pt] 
=&\pm\Res((\gamma_{n-1}'\circ\theta_n)\cdot\theta_n', \gamma_n)\\[3pt]
=&\pm \Res(\gamma_{n-1}'\circ\theta_n, \gamma_n)\,\Res(\theta_n', \gamma_n)\\[3pt] 
=&\pm\Res(\gamma_{n-1}'\circ\theta_n, \gamma_{n-1}\circ\theta_n)\,\Res(\theta_n', \gamma_n)\\[3pt]
=&\pm\Res(\gamma_{n-1}\circ\theta_n,\gamma_{n-1}'\circ\theta_n)\,\Res(\theta_n', \gamma_n)\\[3pt] 
=&\pm\ell(\theta_n)^{(d_{\gamma,n-1}^{\,2}-\,d_{\gamma,n-1})d(\theta_n)}\,\Res(\gamma_{n-1},\gamma_{n-1}')^{d(\theta_n)}\,\Res(\theta_n', \gamma_n)
\end{split} 
\end{equation}
Therefore, combining the expression in (\ref{discriminant3}) with the bottom line of (\ref{discriminant4}), we see that
\begin{equation}\label{discriminant5}
\frac{\Delta_{\gamma,n}}{\Delta_{\gamma,n-1}^{d(\theta_n)}}=\pm\frac{\ell_{\gamma,n-1}^{\,d(\theta_n)}}{\ell_{\gamma,n}}\cdot \ell(\theta_n)^{(d_{\gamma,n-1}^{\,2}-\,d_{\gamma,n-1})d(\theta_n)}\,\Res(\theta_n', \gamma_n).
\end{equation}  
However, using the definition of the resultant in (\ref{resultant}) and the fact that $\theta_n$ has a unique critical point $c$, we see that $\Res(\theta_n', \gamma_n)=\ell(\theta_n')^{d_{\gamma,n}}\gamma_n(c)$. Hence, (\ref{discriminant5}) may be rewritten as 
\begin{equation}\label{discriminant6}
\frac{\Delta_{\gamma,n}}{\Delta_{\gamma,n-1}^{d(\theta_n)}}=\pm\frac{\ell_{\gamma,n-1}^{\,d(\theta_n)}}{\ell_{\gamma,n}}\cdot \ell(\theta_n)^{(d_{\gamma,n-1}^{\,2}-\,d_{\gamma,n-1})d(\theta_n)}\,\ell(\theta_n')^{d_{\gamma,n}}\gamma_n(c).
\end{equation}
Hence, we need only control the relevant leading terms to complete the proof. First, since $\gamma_n=\gamma_{n-1}\circ\theta_n$, we see that $\ell_{\gamma,n}=\ell(\theta_n)^{d_{\gamma,n-1}}\ell_{\gamma,n-1}$. Moreover, $\ell(\theta_n)'=d(\theta_n)\ell(\theta_n)$. Therefore, after substituting these expressions into (\ref{discriminant6}) and simplifying like terms, we obtain the formula in Proposition \ref{prop:discriminant}.   
\end{proof} 
In particular for global fields $K$ and finite subsets $S$ of (\ref{unicrit}), we expect that $\Orb_\gamma^+(c)$ controls most of the ramification in $K_{\gamma,n}$. Specifically, suppose that $a_1,\dots, a_s$ and $d_1,\dots, d_s$ are the leading terms and degrees of a subset of the polynomials in $S$ respectively. Then by inducting on the formula in Proposition \ref{prop:discriminant} we see that if $\mathfrak{p}$ is a prime in $K$ that ramifies in $K_{\gamma,n}$, then $\mathfrak{p}\big\vert (d_1d_2\dots d_sa_1a_2\dots a_s)$ or $\mathfrak{p}\big\vert\gamma_m^+(c)$ for some $1\leq m\leq n$. Hence, if the total orbit of $c$ is finite, then Proposition \ref{prop:discriminant} provides a method for constructing many examples of finitely ramified, infinite extensions.
\begin{example}\label{eg:finitelyramified} Let $S=\{\pm{x^2}, \pm{(x^2-1)}, 2x^2-1\}$, a finite set of quadratic polynomials of the form in (\ref{unicrit}) over the rational numbers. Then we check that $\Orb_S(0)=\{0,\pm{1}\}$. In particular, it follows from Proposition \ref{prop:discriminant} that the extensions $K_{\gamma,n}=\mathbb{Q}(\gamma_n^+)$  are unramified outside of the prime $p=2$ for all $\gamma\in\Phi_S$ and all $n\geq1$. Moreover, if $\gamma=(2x^2-1, 2x^2-1, \theta_3, \dots)$, then $\gamma_n^+$ is irreducible for all $n\geq1$ by Proposition \ref{prop:irreducible} below; the point here is that after the second stage of iteration, one may choose any element of $S$. In particular, it would be interesting to compute the arboreal representations associated to such $\gamma$. The finite ramification precludes finite index in all of $\Aut(T_\gamma)$, but perhaps some subgroup of $\Aut(T_\gamma)$ furnishes the correct overgroup (for finite index with positive probability). 
\end{example} 
\begin{example}\label{eg:finitelyramified2} Likewise, for $a,c\in\mathbb{Z}$ and $a\neq0$, let $S_{a,c}=\big\{a(x-c)^2+\frac{ac-2}{a}, -a(x-c)^2+\frac{ac+2}{a}\big\}$. Then for all sequences $\gamma\in\Phi_{S_{a,c}}$ the extensions (over $\mathbb{Q}$) generated by $\gamma_n^+$ are unramified outside of the primes dividing $a$, $ac-2$, or $ac+2$.    
\end{example} 
We now move on to prove an irreducibility test for right iteration when $S$ is a set of quadratic polynomials; compare to \cite[Proposition 4.2]{Jones} and \cite[Lemma 1.2]{Stoll}. 
\begin{proposition}\label{prop:irreducible} Let $S$ be a set of quadratic polynomials of the form in (\ref{unicrit}), and let $\gamma=(\theta_i)_{i=1}^{\infty}\in\Phi_S$. If 
\begin{equation}\label{criticalorbit}
-\ell_{\gamma,1}\,\gamma_1^+(c),\,\ell_{\gamma,1}\,\gamma_2^+(c),\, \dots,\, \ell_{\gamma,1}\,\gamma_n^+(c)
\end{equation}  
are all non-squares in $K$, then $\gamma_n^+$ is irreducible over $K$.     
\end{proposition}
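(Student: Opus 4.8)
The plan is to argue by induction on $n$, adapting the classical Capelli--Odoni criterion for irreducibility of iterates to the non-constant sequence $\gamma=(\theta_i)$. Write $\theta_i(x)=a_i(x-c)^2+b_i$ with $a_i\neq0$, so $\ell(\theta_i)=a_i$ and $\theta_i(c)=b_i$. From $\gamma_n^+=\gamma_{n-1}^+\circ\theta_n$ one gets $\ell_{\gamma,n}=\ell_{\gamma,n-1}\,a_n^{d_{\gamma,n-1}}$ with $d_{\gamma,n-1}=2^{n-1}$, hence $\ell_{\gamma,n}=\ell_{\gamma,1}\prod_{j=2}^{n}a_j^{2^{j-1}}\equiv\ell_{\gamma,1}\pmod{(K^\times)^2}$ for all $n\geq1$, since each exponent $2^{j-1}$ with $j\geq2$ is even. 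Throughout I read ``non-square'' as ``not in $(K^\times)^2$'', so the hypotheses in particular force $\gamma_m^+(c)\neq0$. For the base case $n=1$ one has $\gamma_1^+=\theta_1$, whose roots satisfy $(x-c)^2=-b_1/a_1\equiv-a_1b_1=-\ell_{\gamma,1}\,\gamma_1^+(c)\pmod{(K^\times)^2}$; thus $\theta_1$ is irreducible over $K$ exactly when $-\ell_{\gamma,1}\,\gamma_1^+(c)$ is a non-square, which accounts for the sign on the first term of (\ref{criticalorbit}).

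For the inductive step I would assume $\gamma_n^+$ is irreducible over $K$ (which, by the inductive hypothesis applied to the first $n$ of the non-square conditions) and that $\ell_{\gamma,1}\,\gamma_{n+1}^+(c)$ is a non-square, and deduce that $\gamma_{n+1}^+=\gamma_n^+\circ\theta_{n+1}$ is irreducible. Fix a root $\beta$ of $\gamma_n^+$, so $[K(\beta):K]=d_{\gamma,n}=2^n$, and let $\alpha$ be a root of $\theta_{n+1}(x)-\beta$. Then $\gamma_{n+1}^+(\alpha)=\gamma_n^+(\theta_{n+1}(\alpha))=\gamma_n^+(\beta)=0$ and $K(\beta)=K(\theta_{n+1}(\alpha))\subseteq K(\alpha)$. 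Hence, if $\theta_{n+1}(x)-\beta$ is irreducible over $K(\beta)$, then $[K(\alpha):K]=2\cdot2^{n}=2^{n+1}=\deg\gamma_{n+1}^+$, so the minimal polynomial of $\alpha$ over $K$ has degree $\deg\gamma_{n+1}^+$ and divides $\gamma_{n+1}^+$, forcing $\gamma_{n+1}^+$ to be irreducible over $K$. So it suffices to show $(\beta-b_{n+1})/a_{n+1}\notin(K(\beta)^\times)^2$, and for that it is enough that its norm to $K$ is a non-square in $K$, since the norm of a square is a square (this also covers the degenerate case $\beta=b_{n+1}$ automatically, the norm then being $0$).

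It remains to compute this norm. Writing $\gamma_n^+(x)=\ell_{\gamma,n}\prod_i(x-\beta_i)$ over $\overline K$ and using that the $\beta_i$ are exactly the (distinct, since $\operatorname{char}K=0$) $K$-conjugates of $\beta$, we get
\[
N_{K(\beta)/K}(\beta-b_{n+1})=\prod_i(\beta_i-b_{n+1})=(-1)^{2^n}\,\frac{\gamma_n^+(b_{n+1})}{\ell_{\gamma,n}}=\frac{\gamma_n^+(b_{n+1})}{\ell_{\gamma,n}},
\]
and since $\theta_{n+1}(c)=b_{n+1}$ we have $\gamma_n^+(b_{n+1})=\gamma_n^+(\theta_{n+1}(c))=\gamma_{n+1}^+(c)$, so
\[
N_{K(\beta)/K}\!\big((\beta-b_{n+1})/a_{n+1}\big)=\frac{\gamma_{n+1}^+(c)}{\ell_{\gamma,n}\,a_{n+1}^{2^n}}\equiv\ell_{\gamma,1}\,\gamma_{n+1}^+(c)\pmod{(K^\times)^2},
\]
because $a_{n+1}^{2^n}$ is a square and $\ell_{\gamma,n}\equiv\ell_{\gamma,1}$ modulo squares. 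The hypothesis that $\ell_{\gamma,1}\,\gamma_{n+1}^+(c)$ is a non-square then gives exactly what is needed, completing the induction.

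The main obstacle is not any single computation but the bookkeeping: one must track how leading coefficients multiply under composition to see that the criterion is phrased in terms of $\ell_{\gamma,1}$ rather than $\ell_{\gamma,n}$, and observe the parity $(-1)^{2^n}=1$ for $n\geq1$, which is precisely why only the first term of (\ref{criticalorbit}) carries a minus sign. The one genuinely structural ingredient is the reduction of irreducibility of $\gamma_n^+\circ\theta_{n+1}$ over $K$ to irreducibility of the quadratic $\theta_{n+1}(x)-\beta$ over $K(\beta)$, which is the standard degree count in the tower $K\subseteq K(\beta)\subseteq K(\alpha)$, together with the classical fact that non-squareness descends from a field to a subfield via the norm.
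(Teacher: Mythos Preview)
Your proof is correct and follows essentially the same route as the paper's: induction on $n$, with the inductive step reducing irreducibility of $\gamma_n^+\circ\theta_{n+1}$ to irreducibility of the quadratic $\theta_{n+1}(x)-\beta$ over $K(\beta)$ (the paper cites this as Capelli's Lemma, while you carry out the degree count directly), then checking non-squareness of $(\beta-b_{n+1})/a_{n+1}$ in $K(\beta)$ via the norm to $K$ and the observation that $\ell_{\gamma,n}\equiv\ell_{\gamma,1}\pmod{(K^\times)^2}$. The only cosmetic difference is that the paper works with $a_{n+1}(\alpha-b_{n+1})$ rather than $(\beta-b_{n+1})/a_{n+1}$, but these lie in the same square class.
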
 
\begin{proof} We proceed by induction. It is clear that if $-\ell_{\gamma,1}\,\gamma_1^+(c)$ is not a square in $K$, then $\gamma_1^+(x)=\ell_{\gamma,1}(x-c)^2+\gamma_1^+(c)$ is an irreducible quadratic polynomial over $K$. For $n\geq2$, assume that Proposition \ref{prop:irreducible} holds for $n-1$ and that the elements listed in (\ref{criticalorbit}) are all non-squares in $K$. Then $\gamma_{n-1}^+$ is irreducible by the induction hypothesis. Now let $\alpha\in\overline{K}$ be any root of $\gamma_{n-1}^+$ and let $\theta_n(x)=a(x-c)^2+b$. Moreover, assume (for a contradiction) that $\theta_n(x)-\alpha$ is reducible over $K(\alpha)$. Then $a(\alpha-b)$ must be a square in $K(\alpha)$. However, since $\gamma_{n-1}^+$ is irreducible over $K$, we see that $(1/\ell_{\gamma,n-1})\gamma_{n-1}^+(x+b)$ is a minimal polynomial of $\alpha-b$ over $K$. Hence, we have the following norm computation: 
\begin{equation*} 
\begin{split} 
N_{K(\alpha)/K}(a(\alpha-b))=a^{[K(\alpha):K]}\cdot N_{K(\alpha-b)/K}(\alpha-b)&=a^{2^{n-1}}\frac{\;(-1)^{2^{n-1}}}{\ell_{\gamma,n-1}}\,\gamma_{n-1}^+\big(0+b\big) \\[3pt] 
&=\frac{a^{2^{n-1}}}{\ell_{\gamma,n-1}}\gamma_{n-1}^+(\theta_n(c))=\frac{a^{2^{n-1}}}{\ell_{\gamma,n-1}}\gamma_n^+(c). \vspace{.05cm}
\end{split} 
\end{equation*}
Therefore (since norms of squares are squares) if $\theta_n(x)-\alpha$ is reducible over $K(\alpha)$, then $\ell_{\gamma,n-1}\gamma_n^+(c)$ is a square in $K$. On the other hand, it is straightforward to check that 
\begin{equation}\label{leadingterm}
\ell_{\gamma,m}=\ell(\theta_m)^{2^{m-1}}\,\ell(\theta_{m-1})^{2^{m-2}}\dots\,\ell(\theta_1)\;\;\; \text{for all $m\geq1$}. \vspace{.05cm}
\end{equation} 
Hence, the square class of $\ell_{\gamma,n-1}\,\gamma_n^+(c)$ in $K$ is the square class of $\ell(\theta_1)\,\gamma_n^+(c)=\ell_{\gamma,1}\,\gamma_n^+(c)$. In particular, we have contradicted our assumption that $\ell_{\gamma,1}\gamma_n^+(c)$ is a non-square in $K$. Therefore, $\theta_n(x)-\alpha$ must be an irreducible polynomial over $K(\alpha)$. Hence, Capelli's Lemma (stated directly below) applied to $g=\gamma_{n-1}^+$ and $f=\theta_n$ implies that $\gamma_n^+=\gamma_{n-1}^+\circ\theta_n$ is irreducible over $K$ as desired. 
\end{proof} 
\begin{lemma}[Capelli's Lemma] Let $K$ be a field, let $f,g\in K[x]$, and let $\alpha\in\overline{K}$ be a root of $g$. Then $g\circ f$ is irreducible over $K$ if and only if both $g$ is irreducible over $K$ and $f-\alpha$ is irreducible over $K(\alpha)$. 
\end{lemma}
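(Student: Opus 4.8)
The plan is to reduce the statement to the multiplicativity of degrees in a tower of fields. Set $m=\deg g$ and $d=\deg f$, so that $\deg(g\circ f)=md$ over the field $K$. Since $f-\alpha$ is a nonconstant polynomial over $K(\alpha)\subseteq\overline{K}$, we may fix a root $\beta\in\overline{K}$ of $f-\alpha$, i.e. $f(\beta)=\alpha$. Then $g(f(\beta))=g(\alpha)=0$, so $\beta$ is a root of $g\circ f$; moreover $\alpha=f(\beta)\in K(\beta)$, so we have a tower $K\subseteq K(\alpha)\subseteq K(\beta)$ and hence $[K(\beta):K]=[K(\beta):K(\alpha)]\cdot[K(\alpha):K]$. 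Everything follows by reading this identity in two directions, using repeatedly the basic fact that a polynomial vanishing at an algebraic element is a unit multiple of that element's minimal polynomial as soon as the two have the same degree.

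For the forward direction, assume $g\circ f$ is irreducible over $K$. I would first check that $g$ is irreducible: a factorization $g=g_1g_2$ with $\deg g_i\geq 1$ would give $g\circ f=(g_1\circ f)(g_2\circ f)$ with each $\deg(g_i\circ f)=d\deg g_i\geq 1$ (note $d\geq 1$, else $g\circ f$ would be constant), contradicting irreducibility. Thus $[K(\alpha):K]=m$. Since $\beta$ is a root of the irreducible polynomial $g\circ f$ of degree $md$, we get $[K(\beta):K]=md$, whence the tower law gives $[K(\beta):K(\alpha)]=d$. As $\beta$ is a root of $f-\alpha\in K(\alpha)[x]$, which has degree $d$, and $K(\beta)=K(\alpha)(\beta)$ has degree $d$ over $K(\alpha)$, the polynomial $f-\alpha$ is a unit multiple of the minimal polynomial of $\beta$ over $K(\alpha)$, hence irreducible over $K(\alpha)$.

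For the reverse direction, assume $g$ is irreducible over $K$ and $f-\alpha$ is irreducible over $K(\alpha)$. Then $[K(\alpha):K]=m$, and since $\beta$ is a root of the degree-$d$ irreducible polynomial $f-\alpha$ over $K(\alpha)$ we get $[K(\beta):K(\alpha)]=d$; multiplying, $[K(\beta):K]=md=\deg(g\circ f)$. The minimal polynomial of $\beta$ over $K$ divides $g\circ f$ and has degree $md$, so it equals $g\circ f$ up to a unit, and $g\circ f$ is irreducible over $K$. I do not expect a real obstacle here; the only care needed is in the edge cases (harmlessly excluded by the hypotheses, which force $f$ and $g$ nonconstant) and in the uniform use of the minimal-polynomial/divisibility fact. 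In particular, this argument uses only the tower law, so no separability hypothesis on $g$ is needed; the alternative route --- factoring $g\circ f=c\prod_i(f-\alpha_i)$ over $\overline{K}$ and tracking Galois orbits of the roots $\alpha_i$ of $g$ --- would require $g$ separable and some orbit bookkeeping, so the degree computation is both shorter and valid over an arbitrary field.
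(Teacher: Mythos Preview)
Your argument is correct: the tower-law computation with the chosen root $\beta$ of $f-\alpha$ cleanly handles both directions, and your remarks on the edge cases (forcing $d\geq1$) and on avoiding separability hypotheses are accurate.

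There is nothing to compare against, however: the paper does not prove Capelli's Lemma. It is stated immediately after the proof of Proposition~\ref{prop:irreducible} as a named classical result and invoked without argument. So your write-up supplies a proof where the paper simply cites the lemma.
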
 
\begin{remark}\label{eg:finitelyramified+irre} Let $S=\{\pm{x^2}, \pm{(x^2-1)}, 2x^2-1\}$ be as in Example \ref{eg:finitelyramified}. Then, it is easy to check that if $\gamma$ is of the form $\gamma=(2x^2-1, 2x^2-1, \theta_3, \dots)$, then $\ell_{\gamma,1}\gamma_{n}^+(0)=2$ for all $n\geq1$. In particular, it follows from Proposition \ref{prop:irreducible} that the polynomials $\gamma_n^+$ are irreducible over the rational numbers for all $n\geq1$. Moreover, it is worth noting that the $\gamma_n^+$ (and their reciprocal polynomials for $n\geq2$) are not Eisenstein at $p=2$.  
\end{remark} 
In particular, we can use the irreducibility test in Proposition \ref{prop:irreducible} to make some progress towards Question \ref{question:Galois} for finite sets of quadratic polynomials with integral coefficients. For a reminder of the definition of escape points, see Definition \ref{def:escapepts} above.  
\begin{theorem}\label{thm:stability} Let $S=\{x^2+c_1, x^2+c_2,\dots, x^2+c_s\}$ for some distinct $c_i\in\mathbb{Z}$, and assume that $S$ has the following properties: \vspace{.05cm}
\begin{enumerate}
\item[\textup{(1)}] Some $-c_i$ is not a square in $\mathbb{Z}$. \vspace{.05cm}
\item[\textup{(2)}] $0$ is an escape points for $S$. \vspace{.05cm}
\end{enumerate}
Then for all discrete probability measures $\nu$ on $S$, we have that 
\[\bar{\nu}\Big(\big\{\gamma\in\Phi_S\,:\,\gamma_n^+\,\text{is irreducible over $\mathbb{Q}$ for all $n\geq1$}\big\}\Big)>0.\]
Equivalently, $G_{\gamma,\mathbb{Q}}$ acts transitively on $T_\gamma$ with positive probability. 
\end{theorem}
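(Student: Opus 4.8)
The plan is to combine the irreducibility criterion of Proposition \ref{prop:irreducible} with the escape-point estimate of Lemma \ref{lem:escapept}. Every element of $S$ has the form $x^2+c_i$, so the common critical point of $S$ is $0$ and every leading term equals $1$; hence $\ell_{\gamma,1}=1$ for all $\gamma$, and Proposition \ref{prop:irreducible} tells us that $\gamma_n^+$ is irreducible over $\mathbb{Q}$ as soon as
\[-\gamma_1^+(0),\ \gamma_2^+(0),\ \ldots,\ \gamma_n^+(0)\]
are all non-squares in $\mathbb{Q}$. Since each $\gamma_m^+(0)$ is an integer, and an integer is a square in $\mathbb{Q}$ if and only if it equals $k^2$ for some $k\in\mathbb{Z}$, it suffices to produce a cylinder set of positive $\bar{\nu}$-measure on which $-\gamma_1^+(0)$ is a non-square and $\gamma_m^+(0)$ is a non-square for every $m\geq 2$; the final (``equivalently'') clause then follows from the standard translation identifying irreducibility of all $\gamma_n^+$ over $\mathbb{Q}$ with transitivity of $G_{\gamma,\mathbb{Q}}$ on the levels of $T_\gamma$.

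By hypothesis (1) I fix $c:=c_{i_0}$ with $-c$ a non-square in $\mathbb{Z}$ and set $\phi:=x^2+c\in S$; note $c\neq 0$ (else $-c=0$ is a square) and $c\neq -1$. The elementary input is: \emph{if $c\in\mathbb{Z}\setminus\{0\}$ and $y\in\mathbb{Z}$ with $|y|>|c|$, then $y^2+c$ is not a perfect square}, because then $(|y|-1)^2<y^2+c<(|y|+1)^2$ while $y^2+c\neq y^2$. Let $r\geq 0$ be the escape level of $0$ for $S$ and let $B_{S,0,1}>0$ be the corresponding constant from Lemma \ref{lem:escapept} (which applies since $S$ is finite with all maps of degree $\geq 2$). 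Choose an integer $M$ with $M-1\geq\max(1,r)$ and $\exp(B_{S,0,1}\cdot 2^{M-1})>|c|$, and consider the cylinder
\[Z:=\big\{\gamma\in\Phi_S:\ \theta_1=\theta_2=\cdots=\theta_{M-1}=\phi\big\},\qquad \bar{\nu}(Z)=\nu(\phi)^{M-1}>0.\]
I claim every $\gamma\in Z$ has $\gamma_n^+$ irreducible over $\mathbb{Q}$ for all $n$, which proves the theorem. Indeed, $-\gamma_1^+(0)=-\phi(0)=-c$ is a non-square by choice. For $2\leq m\leq M-1$ we have $\gamma_m^+(0)=\phi^m(0)$; a short induction (distinguishing $|c|=1$ from $|c|\geq 2$) gives $|\phi^k(0)|\geq|c|$ for all $k\geq 1$, and then the elementary input shows $\phi^m(0)$ is a non-square, the only boundary case $|\phi^{m-1}(0)|=|c|$ yielding $\phi^m(0)=c^2+c=c(c+1)$, which is a non-square for $c\notin\{0,-1\}$ (since $c^2<c(c+1)<(c+1)^2$ when $c>0$, and $(d-1)^2<d(d-1)<d^2$ with $d=-c\geq 2$ when $c<-1$). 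Finally, for $m\geq M$ write $\gamma_m^+=\phi\circ(T\gamma)_{m-1}^+$, so that $\gamma_m^+(0)=y_m^2+c$ with $y_m:=(T\gamma)_{m-1}^+(0)\in\mathbb{Z}$; since $(T\gamma)_{m-1}^+\in M_{S,m-1}$ has degree $2^{m-1}$ and $m-1\geq M-1\geq r$, Lemma \ref{lem:escapept} gives $h(y_m)\geq B_{S,0,1}\cdot 2^{m-1}>0$, hence $y_m\neq 0$ and $|y_m|=\exp(h(y_m))\geq\exp(B_{S,0,1}\cdot 2^{m-1})\geq\exp(B_{S,0,1}\cdot 2^{M-1})>|c|$, so $\gamma_m^+(0)=y_m^2+c$ is a non-square. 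Thus $-\gamma_1^+(0)$ and $\gamma_m^+(0)$ for all $m\geq 2$ are non-squares, and Proposition \ref{prop:irreducible} finishes the argument.

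The main obstacle is the tail range $m\geq M$: one must show that, however the prefix $(\phi,\ldots,\phi)$ is continued, $|(T\gamma)_{m-1}^+(0)|$ eventually overtakes the fixed constant $|c|$. This is exactly what hypothesis (2) provides, via Lemma \ref{lem:escapept} applied to the shifted composition $(T\gamma)_{m-1}^+$: the escape-point bound forces doubly exponential growth of $|(T\gamma)_{m-1}^+(0)|$, uniformly over all continuations of the prefix. Without hypothesis (2) the point $0$ could be preperiodic for $S$, the values $(T\gamma)_{m-1}^+(0)$ would stay bounded, and one could not rule out infinitely many $\gamma_m^+(0)$ being perfect squares. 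The only other point requiring care is the finite range $2\leq m\leq M-1$, handled by the explicit computation of the genuine iterates $\phi^m(0)$ above; this is an elementary calculation, the boundary value $c(c+1)$ being the lone slightly special case.
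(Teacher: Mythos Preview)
Your proof is correct and in fact more elementary than the paper's. Both arguments reduce to Proposition~\ref{prop:irreducible} (checking that $-\gamma_1^+(0),\gamma_2^+(0),\ldots$ are all non-squares on a positive-measure cylinder) and use Lemma~\ref{lem:escapept} to control the tail. The difference lies in how the tail is handled. The paper fixes the first \emph{two} coordinates $\theta_1=\theta_2=\phi_1$, considers the genus-one curve $E\colon y^2=\phi_1^2(x)$, and invokes Siegel's Theorem to conclude that $E$ has only finitely many integral points; if $\gamma_n^+(0)$ were a square, then $\big(\theta_3\circ\cdots\circ\theta_n(0),\,y_n\big)$ would lie on $E(\mathbb{Z})$, and Lemma~\ref{lem:escapept} forces the $x$-coordinate to have height $\gg 2^{n}$, bounding $n$. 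You instead fix only $\theta_1=\phi$ and work with the genus-zero equation $y^2=x^2+c$, using the trivial consecutive-squares observation that it has no integer solutions with $|x|>|c|$; the escape-point bound then guarantees $|y_m|>|c|$ for all $m\ge M$. This completely avoids Siegel's Theorem and yields an explicit prefix length $M$. The trade-off is that you must separately verify the pure iterates $\phi^m(0)$ are non-squares for $2\le m\le M-1$; the paper sidesteps this by quoting Stoll, while you supply a direct argument including the boundary case $\phi^m(0)=c(c+1)$. Your route is both simpler and more self-contained.
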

\begin{proof} Without loss of generality, we may assume that $-c_1$ is not a square in $\mathbb{Z}$. Therefore, if $\phi_1=x^2+c_1$, then it follows from the proof of \cite[Corollary 1.3]{Stoll} that $\phi_1^n(0)$ is not a square in $\mathbb{Z}$ for all $n\geq2$. In particular, $\phi_1^n$ is irreducible over $\mathbb{Q}$ for all $n\geq1$ by \cite[Corollary 1.3]{Stoll} and our assumption on $c_1$. Now consider the affine equation $E: y^2=\phi_1^2(x)$. Note that $E$ is nonsingular, since $\phi_1^2(x)$ is irreducible. In particular, there are only finitely many integer solutions $(x,y)\in\mathbb{Z}^2$ to $E$ by Siegel's Theorem. Now suppose that $\gamma\in\Phi_S$ is of the form $\gamma=(\phi_1,\phi_1, \theta_3,\dots)$ and that $\gamma_n^+(0)=y_n^2$ for some $y_n\in\mathbb{Z}$ and some $n\geq\max\{r+2\}$; here $r\geq0$ is the escape level of $0$ for $S$. Then $(x,y)=(\theta_3\circ\dots\circ\theta_n(0),y_n)$ is an integral solution to $E$. Therefore, there is a positive constant $B_E$ such that $h(\theta_3\circ\dots\circ\theta_n(0))\leq B_E$. Combining this bound with the lower bound in Lemma \ref{lem:escapept} applied to the function $f=\theta_3\circ\dots\circ\theta_n$ and the point $P=0$, we see that there is a positive constant $B_1=B_{S,0,1}$ such that 
\[0<B_{1}<\frac{h(\theta_3\circ\dots\circ\theta_n(0))}{2^{n-2}}\leq\frac{B_E}{2^{n-2}};\]
here we use that $\deg(\theta_3\circ\dots\circ\theta_n)=2^{n-2}$, since $S$ is a set of quadratic polynomials. Hence, such indices $n$ are bounded: $n\leq n_{E,0}:=\log_2(B_E/B_1)+2$. From here, define $N:=\max\{r+2,n_{E,0}\}$ and consider the sequences 
\[\Phi_{S,1,N}:=\big\{\gamma\in\Phi_S\,:\,\gamma=(\phi_1,\phi_1,\dots, \phi_1, \theta_{N+1}, \dots)\big\}.\]  
Then by definition of $N$, if $\gamma\in\Phi_{S,1,N}$, we see that $\gamma_n^+(0)$ cannot be a square in $\mathbb{Z}$ for all $n> N$. On the other hand, if $\gamma\in\Phi_{S,1,N}$, then $-\gamma_1^+(0), \gamma_2^+(0),\dots, \gamma_N^+(0)$ are all non-squares in $\mathbb{Q}$, since $\gamma_m^+(x)=\phi_1^m(x)$ for all $1\leq m\leq N$, since $\phi_1^n(0)$ is not a square in $\mathbb{Q}$ for all $n\geq2$, and since $-\phi_1(0)=-c_1$. Therefore, it follows from Proposition \ref{prop:irreducible} above, that if $\gamma\in\Phi_{S,1,N}$, then $\gamma_n^+$ is irreducible over $\mathbb{Q}$ for all $n\geq1$. However, $\bar{\nu}(\Phi_{S,1,N})=\nu(\phi_1)^N>0$ by \cite[Theorem 10.4]{ProbabilityText}, and the result follows.       
\end{proof}
In particular, we have the following immediate consequence of Theorem \ref{thm:stability} and Corollary \ref{cor:unicritescape} above; see also Remark \ref{rmk:escape}. 
\begin{corollary}\label{cor:Galoisescp}  Let $S=\{x^2+c_1, x^2+c_2,\dots, x^2+c_s\}$ for some distinct $c_i\in\mathbb{Z}$, and assume that $S$ has the following properties: \vspace{.05cm}
\begin{enumerate}
\item[\textup{(1)}] Some $-c_i$ is not a square in $\mathbb{Z}$. \vspace{.075cm}
\item[\textup{(2)}] $|c_i^2+c_j|\geq2\max\{|c_i|\}$ for all $1\leq i,j\leq s$.  \vspace{.075cm}
\end{enumerate}
Then for all discrete probability measures $\nu$ on $S$, we have that 
\[\bar{\nu}\Big(\big\{\gamma\in\Phi_S\,:\,\gamma_n^+\,\text{is irreducible over $\mathbb{Q}$ for all $n\geq1$}\big\}\Big)>0.\]
Equivalently, $G_{\gamma,\mathbb{Q}}$ acts transitively on $T_\gamma$ with positive probability. 
\end{corollary}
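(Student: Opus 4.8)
The plan is to show that the pair of hypotheses in the corollary implies the pair of hypotheses in Theorem \ref{thm:stability}, after which the conclusion is immediate. Hypothesis (1) is literally the same in both statements, so nothing is needed there. The only real work is to verify that condition (2) of the corollary — the inequality $|c_i^2+c_j|\geq 2\max\{|c_i|\}$ for all $i,j$ — forces $0$ to be an escape point for $S=\{x^2+c_1,\dots,x^2+c_s\}$, which is condition (2) of Theorem \ref{thm:stability}.

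To do this, I would invoke Corollary \ref{cor:unicritescape} together with Remark \ref{rmk:escape}. Recall that for $S$ a set of unicritical polynomials $x^{d_i}+c_i$ with $c_i\in\mathbb{Z}\smallsetminus\{0\}$, Corollary \ref{cor:unicritescape} shows that $P=0$ is an escape point provided $h\big((0^{d_i}+c_i)^{d_j}+c_j\big)\geq\max_i\{\log|2c_i|\}$ for all $i,j$ — and in the quadratic case $d_i=2$ this reads $h(c_i^2+c_j)\geq\log(2\max\{|c_i|\})$, i.e. $|c_i^2+c_j|\geq 2\max\{|c_i|\}$, which is exactly hypothesis (2) of the corollary. (This is precisely the content of Remark \ref{rmk:escape}: when $|c_i^{d_j}+c_j|\geq 2\max\{|c_i|\}$, the point $0$ is an escape point for $S$.) Here one also needs that some $c_i\ne 0$, which follows from hypothesis (1): if every $c_i$ were $0$ then $-c_1=0$ would be a square, contradicting (1); so $S$ genuinely consists of polynomials with nonzero constant term and Corollary \ref{cor:unicritescape} applies verbatim.

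With both hypotheses of Theorem \ref{thm:stability} in hand — some $-c_i$ a non-square in $\mathbb{Z}$, and $0$ an escape point for $S$ — that theorem yields directly that for every discrete probability measure $\nu$ on $S$,
\[
\bar{\nu}\Big(\big\{\gamma\in\Phi_S\,:\,\gamma_n^+\ \text{is irreducible over }\mathbb{Q}\ \text{for all }n\geq 1\big\}\Big)>0,
\]
and, since irreducibility of $\gamma_n^+$ for all $n$ is equivalent to $G_{\gamma,\mathbb{Q}}$ acting transitively on each level of the preimage tree $T_\gamma$, this gives transitivity on $T_\gamma$ with positive probability. I do not anticipate a genuine obstacle here: the corollary is essentially a packaging result, and the only point requiring a moment's care is checking that the arithmetic inequality in hypothesis (2) is literally the escape-point criterion of Corollary \ref{cor:unicritescape} specialized to $d_i=d_j=2$ and $P=0$, together with noting that hypothesis (1) rules out the degenerate all-$c_i$-zero case so that the height-controlled setup of that corollary is in force.
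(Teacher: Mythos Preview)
Your approach is correct and matches the paper's own proof, which simply records the corollary as an immediate consequence of Theorem \ref{thm:stability} together with Corollary \ref{cor:unicritescape} and Remark \ref{rmk:escape}. One small slip: Corollary \ref{cor:unicritescape} requires \emph{every} $c_i\neq 0$, not merely some, and your argument from hypothesis (1) only excludes the case where all $c_i$ vanish; to close this, note that if some $c_{i_0}=0$ then hypothesis (2) with $i=j=i_0$ gives $0\geq 2\max_k|c_k|$, forcing all $c_k=0$ and contradicting distinctness (or (1) when $s=1$).
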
  
We next generalize Stoll's maximality lemma \cite[Lemma 1.6]{Stoll} to sets of quadratic polynomials; see also  \cite[Lemma 3.2]{Jones}. In practice, this maximality lemma is the main tool for showing a given arboreal representation has finite index in the automorphism group of its associated preimage tree.
\begin{proposition}\label{prop:maximality} Let $S$ be a set of quadratic polynomials of the form in (\ref{unicrit}), and let $\gamma=(\theta_i)_{i=1}^{\infty}\in\Phi_S$. Assume that $n\geq1$ and that $\gamma_{n-1}^+$ is irreducible over $K$. Then the following statements are equivalent: \vspace{.15cm}
\begin{enumerate}
\item[\textup{(1)}] $[K_{\gamma,n}:K_{\gamma,n-1}]=2^{2^{n-1}}$. \vspace{.15cm}  
\item[\textup{(2)}] $\ell_{\gamma,1}\,\gamma_n^+(c)$ is not a square in $K_{\gamma,n-1}$.  
\end{enumerate} 
\end{proposition}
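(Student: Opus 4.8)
The plan is to reduce the statement to Kummer theory over $L := K_{\gamma,n-1}$ together with a group-theoretic fact about $2$-groups, in the spirit of Stoll's original maximality lemma. Write $\theta_n(x) = a(x-c)^2 + b$ with $a,b \in K$ and $a \neq 0$, and let $\beta_1,\dots,\beta_N$ be the roots of $\gamma_{n-1}^+$ (distinct, since $\operatorname{char} K = 0$), where $N = d_{\gamma,n-1} = 2^{n-1}$ because every $\theta_i$ is quadratic. Since $\gamma_n^+ = \gamma_{n-1}^+\circ\theta_n$, the roots of $\gamma_n^+$ are the elements $c \pm \sqrt{\omega_i}$ with $\omega_i := (\beta_i - b)/a \in L$, so $K_{\gamma,n} = L\big(\sqrt{\omega_1},\dots,\sqrt{\omega_N}\big)$. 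By Kummer theory $[K_{\gamma,n}:L] = 2^{\dim_{\mathbb{F}_2} W}$, where $W \subseteq L^\times/(L^\times)^2$ is the $\mathbb{F}_2$-span of the classes $\bar\omega_1,\dots,\bar\omega_N$; since $0 \leq \dim W \leq N$, statement (1) is equivalent to the assertion that $\bar\omega_1,\dots,\bar\omega_N$ are $\mathbb{F}_2$-linearly independent.

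Next I would identify the ``obvious'' relation among the $\bar\omega_i$ with the quantity in statement (2). Using $\gamma_{n-1}^+(x) = \ell_{\gamma,n-1}\prod_i(x-\beta_i)$ together with $\theta_n(c) = b$, one computes
\[ \prod_{i=1}^N \omega_i = \frac{(-1)^N}{a^N}\cdot\frac{\gamma_{n-1}^+(b)}{\ell_{\gamma,n-1}} = \frac{(-1)^N}{a^N\,\ell_{\gamma,n-1}}\,\gamma_n^+(c). \]
For $n \geq 2$ the exponent $N = 2^{n-1}$ is even, so $(-1)^N = 1$ and $a^N \in (L^\times)^2$; moreover the leading-term formula (\ref{leadingterm}) gives $\ell_{\gamma,n-1} = \ell(\theta_{n-1})^{2^{n-2}}\cdots\ell(\theta_2)^{2}\,\ell(\theta_1)$, whose exponents are all even except the last, so $\ell_{\gamma,n-1} \equiv \ell_{\gamma,1} \pmod{(L^\times)^2}$. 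Hence $\sum_{i=1}^N \bar\omega_i = \overline{\ell_{\gamma,1}\,\gamma_n^+(c)}$ in $L^\times/(L^\times)^2$, and statement (2) is exactly the assertion that $\sum_i \bar\omega_i \neq 0$. (The case $n=1$, where $K_{\gamma,0}=K$, is a direct Kummer computation.) It therefore remains to prove: $\bar\omega_1,\dots,\bar\omega_N$ are linearly independent if and only if $\sum_i \bar\omega_i \neq 0$.

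One direction is trivial, so suppose the $\bar\omega_i$ satisfy a nontrivial $\mathbb{F}_2$-relation; I must show $\sum_i\bar\omega_i = 0$. Let $U$ be the (nonzero) kernel of the surjection $\mathbb{F}_2^N \to W$, $e_i \mapsto \bar\omega_i$. The group $G := \operatorname{Gal}(L/K) = G_{\gamma,n-1}$ fixes $a,b \in K$ and permutes the $\beta_i$, hence permutes the $\omega_i$; thus $G$ acts on $\mathbb{F}_2^N$ by coordinate permutations and $U$ is a nonzero $\mathbb{F}_2[G]$-submodule. Two inputs now finish the argument. First, $G$ is a $2$-group: $L = K_{\gamma,n-1}$ is generated over $K$ by the vertices of the preimage tree $T_{\gamma,n-1}$ (each internal vertex being the image of a leaf under a composition of the $\theta_i$), and since all maps are quadratic $G$ embeds in $\operatorname{Aut}(T_{\gamma,n-1})$, an iterated wreath product of $\mathbb{Z}/2\mathbb{Z}$. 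Second, a $p$-group acting on a nonzero finite $\mathbb{F}_p$-vector space has a nonzero fixed vector, so $U^G \neq 0$; and because $\gamma_{n-1}^+$ is irreducible, $G$ acts transitively on $\{\beta_1,\dots,\beta_N\}$, whence $(\mathbb{F}_2^N)^G = \mathbb{F}_2\cdot(e_1+\dots+e_N)$ is one-dimensional. Therefore $e_1+\dots+e_N \in U^G \subseteq U$, i.e. $\sum_i\bar\omega_i = 0$, as required; combining this with the trivial implication and the computation of the previous paragraph yields (1) $\iff$ (2).

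The main obstacle — indeed the only nonformal point — is the group theory of the last paragraph: one must know that $G_{\gamma,n-1}$ is a $2$-group, which rests on correctly identifying $K_{\gamma,n-1}$ with the field generated by the full preimage tree $T_{\gamma,n-1}$ and on the structure of $\operatorname{Aut}$ of a binary rooted tree, and then combine this with the standard fixed-point lemma for $p$-groups. (Without the $2$-group hypothesis the final implication fails, as the action of $S_3$ on $\mathbb{F}_2^3$ shows.) Everything else is routine Kummer theory plus the leading-term and resultant bookkeeping already set up around Proposition~\ref{prop:irreducible} and formula (\ref{leadingterm}).
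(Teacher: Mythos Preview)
Your proof is correct and follows essentially the same route as the paper: both arguments set up the extension $K_{\gamma,n}/K_{\gamma,n-1}$ as a multiquadratic Kummer extension, use the $2$-group fixed-point lemma together with transitivity of $G_{\gamma,n-1}$ on the roots to show that any nontrivial relation forces the all-ones relation, and then identify the product $\prod_i\omega_i$ with $\ell_{\gamma,1}\gamma_n^+(c)$ modulo squares via the leading-term formula~(\ref{leadingterm}). The only differences are cosmetic---you take Kummer generators $\omega_i=(\beta_i-b)/a$ where the paper takes $a(\alpha_i-b)$ (these differ by the square $a^2$), and you phrase the linear algebra in terms of the span $W$ rather than the relation space $V$; you also spell out explicitly why $G_{\gamma,n-1}$ is a $2$-group, which the paper simply asserts.
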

\begin{remark} Since $K_{\gamma,n}/K_{\gamma,n-1}$ is the compositum of at most $2^{n-1}$ quadratic extensions (one for each root of $\gamma_{n-1}^+$), we see that $[K_{\gamma,n}:K_{\gamma,n-1}]=2^{2^m}$ for some $0\leq m\leq n-1$. For this reason, when $m=n-1$ we say that the extension $K_{\gamma,n}/K_{\gamma,n-1}$ is maximal.  
\end{remark} 
\begin{proof} We begin with a few observations analogous to those in the proof of \cite[Lemma 1.6]{Stoll}. Let $\theta_n(x)=a(x-c)^2+b$, let $d=2^{n-1}$, and let $\alpha_1, \alpha_2, \dots, \alpha_{d}$ be the roots of $\gamma_{n-1}^+$ in $K_{\gamma,n-1}$. Then $K_{\gamma,n}=K_{\gamma,n-1}\big(\sqrt{a(\alpha_i-b)}:\,1\leq i\leq d\big)$ since $\pm1/a\sqrt{a(\alpha_i-b)}+c$ are roots of $\gamma_n^+$. Hence, $K_{\gamma,n}/K_{\gamma,n-1}$ is a $2$-Kummer extension and $[K_{\gamma,n}:K_{\gamma,n-1}]=2^{d-\dim(V)}$, where $V$ is the $\mathbb{F}_2$-vector space given by
\[V:=\Big\{(e_1,\dots, e_{d})\in\mathbb{F}_2^{d}\,:\,\prod_{i=1}^d(a(\alpha_i-b))^{e_i}\in (K_{\gamma,n-1})^2\Big\};\]
see \cite[VI \S8]{Lang}. On the other hand, since $G_{\gamma,n-1}:=\Gal(K_{\gamma,n-1}/K)$ permutes the roots of $\gamma_{n-1}^+$, we obtain an induced linear action of $G_{\gamma,n-1}$ on $V$. Moreover, since $G_{\gamma,n-1}$ is a $2$-group, either $\dim(V)=0$ or $V$ has a non-trivial $G_{\gamma,n-1}$-fixed vector; see \cite[I Lemma 6.3]{Lang}. However, $\gamma_{n-1}^+$ is irreducible over $K$, so that $G_{\gamma,n-1}$ acts transitively on the roots of $\gamma_{n-1}^+$. In particular, $(1,\dots,1)$ is the only possible non-trivial fixed vector. Therefore, we have deduced the following fact: either $\dim(V)=0$ or $(1,\dots,1)\in V$. However, if $(1,\dots,1)\in V$, then 
\[\prod_{i=1}^da(\alpha_i-b)=\frac{a^d\cdot(-1)^{d}}{\ell_{\gamma,n-1}}\cdot\Big(\ell_{\gamma,n-1}\prod_{i=1}^d(b-\alpha_i)\Big)=\frac{a^d}{\ell_{\gamma,n-1}}\cdot\gamma_{n-1}^+(b)=\frac{a^d}{\ell_{\gamma,n-1}}\cdot\gamma_{n}^+(c)\]
is a square in $K_{\gamma,n-1}$; here we use that $d$ is even. Moreover, (\ref{leadingterm}) implies that $\ell_{\gamma,n-1}$ is a square in $K$ times $\ell_{\gamma,1}$. In particular, $(1,\dots,1)\in V$ if and only if $\ell_{\gamma,1}\,\gamma_n^+(c)$ is a square in $K_{\gamma,n-1}$. The result easily follows.   
\end{proof}   
We combine the discriminant formula and the maximality lemma above to obtain a sufficient criterion for ensuring that a given arboreal representation (associated to a sequence of quadratic polynomials) has finite index in the automorphism group of its preimage tree. To do this, we briefly fix some notation. Let $K$ be a global field of characteristic $0$, i.e., a number field or a finite extension $K/k(t)$ of a rational function field in one variable; here $k$ has characteristic $0$. Given a finite prime $\mathfrak{p}$ of $K$, we let $v_{\mathfrak{p}}$ denote the normalized valuation on $K$ associated to $\mathfrak{p}$. Moreover, when $K$ is a number field, we let $\mathfrak{o}_K$ denote the ring of integers of $K$. When $K$ is a function field, we choose a prime $\mathfrak{p}_0$, and let $\mathfrak{o}_K$ denote the set $\{z\in K\,:\,v_{\mathfrak{p}}(z)\geq0\,\text{for all $\mathfrak{p}\neq\mathfrak{p}_0$}\}$. With these notions in place, we have the following arithmetic finite index test.  
\begin{theorem}{\label{maxtest}} Let $K$ be a global field of characteristic zero and let $S$ be a set of quadratic polynomials in $\mathfrak{o}_K[x]$ with common critical point $c\in\mathfrak{o}_K$. Assume that a sequence $\gamma=(\theta_i)_{i=1}^{\infty}\in\Phi_S$ is such that $\gamma_m^+$ is irreducible for all $m\geq1$. Moreover, assume that for all $n$ sufficiently large there exists a prime $\mathfrak{p}_{\gamma,n}$ of $K$ with the following properties: \vspace{.1cm}
\begin{enumerate}  
\item[\textup{(1)}] $v_{\mathfrak{p}_{\gamma,n}}(2)=0$. \vspace{.1cm}
\item [\textup{(2)}] $\mathfrak{p}_{\gamma,n}\neq\mathfrak{p}_0$ if $K$ is a function field. \vspace{.1cm}
\item[\textup{(3)}] $v_{\mathfrak{p}_{\gamma,n}}(\ell(\theta_m))=0$ for all $1\leq m\leq n$.\vspace{.1cm}
\item [\textup{(4)}] $v_{\mathfrak{p}_{\gamma,n}}(\gamma_m^+(c))=0$ for all $1\leq m\leq n-1$. \vspace{.1cm}
\item [\textup{(5)}] $v_{\mathfrak{p}_{\gamma,n}}(\gamma_n^+(c))\equiv1\Mod{2}$. \vspace{.1cm}
\end{enumerate}  
Then $G_{\gamma,K}$ is a finite index subgroup of $\Aut(T_\gamma)$.       
\end{theorem}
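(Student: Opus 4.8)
The plan is to reduce the statement to a level-by-level maximality condition and then verify that condition using Proposition~\ref{prop:maximality} together with the discriminant recursion of Proposition~\ref{prop:discriminant}.

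First, recall the standard fact from arboreal dynamics (see, e.g., \cite{Jones-Survey}) that $[\Aut(T_\gamma):G_{\gamma,K}]=\sup_n[\Aut(T_{\gamma,n}):G_{\gamma,n}]$, and that the latter is a nondecreasing sequence of positive integers whose consecutive ratio $2^{2^{n-1}}/[K_{\gamma,n}:K_{\gamma,n-1}]$ is an integer $\ge1$, because $[K_{\gamma,n}:K_{\gamma,n-1}]$ divides the order $2^{2^{n-1}}$ of the kernel of $\Aut(T_{\gamma,n})\to\Aut(T_{\gamma,n-1})$. Hence it suffices to show that $[K_{\gamma,n}:K_{\gamma,n-1}]=2^{2^{n-1}}$ for all sufficiently large $n$. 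Since each $\gamma_{n-1}^+$ is irreducible over $K$ by hypothesis, Proposition~\ref{prop:maximality} reduces this in turn to showing that $\ell_{\gamma,1}\,\gamma_n^+(c)$ is a nonsquare in $K_{\gamma,n-1}$ for all large $n$.

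Fix such an $n$ and let $\mathfrak{p}=\mathfrak{p}_{\gamma,n}$ be a prime as in the hypotheses. The key claim is that $\mathfrak{p}$ is unramified in $K_{\gamma,n-1}/K$. Indeed, a prime of $K$ dividing neither the leading coefficient $\ell_{\gamma,n-1}$ nor the discriminant $\Delta_{\gamma,n-1}$ of $\gamma_{n-1}^+$ is unramified in the splitting field $K_{\gamma,n-1}$ (rescale $\gamma_{n-1}^+$ to a monic $\mathfrak{p}$-integral polynomial with the same splitting field and apply Dedekind's criterion; note $\gamma_{n-1}^+$ is separable, being irreducible in characteristic $0$, so $\Delta_{\gamma,n-1}\neq0$). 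Property~(3) and formula~(\ref{leadingterm}) give $v_{\mathfrak{p}}(\ell_{\gamma,m})=0$ for all $m\le n-1$. Specializing Proposition~\ref{prop:discriminant} to the quadratic maps $\theta_m$ yields
\[
\Delta_{\gamma,m}=\pm\,2^{2^m}\,\ell_{\gamma,m-1}\,\ell(\theta_m)^{2^{m-1}(2^m-1)}\,\gamma_m^+(c)\,\Delta_{\gamma,m-1}^{\,2}\qquad(m\ge1),
\]
and an induction on $m$ from $1$ to $n-1$ (with the convention $\ell_{\gamma,0}=\Delta_{\gamma,0}=\pm1$) shows $v_{\mathfrak{p}}(\Delta_{\gamma,m})=0$: the factor $2^{2^m}$ is a $\mathfrak{p}$-unit by~(1), the factors $\ell_{\gamma,m-1}$ and $\ell(\theta_m)^{2^{m-1}(2^m-1)}$ by~(3) and~(\ref{leadingterm}), the factor $\gamma_m^+(c)$ by~(4) (which is $\mathfrak{p}$-integral in the function-field case by~(2)), and $\Delta_{\gamma,m-1}^{\,2}$ by the inductive hypothesis. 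Thus $v_{\mathfrak{p}}(\Delta_{\gamma,n-1})=0$, proving the claim.

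Now pick any prime $\mathfrak{P}$ of $K_{\gamma,n-1}$ lying over $\mathfrak{p}$. Since $e(\mathfrak{P}/\mathfrak{p})=1$, the valuation $v_{\mathfrak{P}}$ agrees with $v_{\mathfrak{p}}$ on $K^{\times}$; hence $v_{\mathfrak{P}}(\ell_{\gamma,1})=0$ by~(3) and $v_{\mathfrak{P}}(\gamma_n^+(c))\equiv1\pmod{2}$ by~(5), so $v_{\mathfrak{P}}\big(\ell_{\gamma,1}\,\gamma_n^+(c)\big)$ is odd. Therefore $\ell_{\gamma,1}\,\gamma_n^+(c)$ is not a square in $K_{\gamma,n-1}$, and Proposition~\ref{prop:maximality} gives $[K_{\gamma,n}:K_{\gamma,n-1}]=2^{2^{n-1}}$. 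Since this holds for all large $n$, the reduction in the first paragraph yields $[\Aut(T_\gamma):G_{\gamma,K}]<\infty$. I expect the unramifiedness claim to be the main obstacle: conditions~(1),~(3),~(4) are tailored precisely so that every factor in the discriminant recursion is a unit at $\mathfrak{p}_{\gamma,n}$, ensuring that the odd valuation provided by~(5) is not destroyed by ramification on passing up to $K_{\gamma,n-1}$; one must also be careful that the passage from ``$\mathfrak{p}\nmid\Delta_{\gamma,n-1}$'' to ``$\mathfrak{p}$ unramified in the splitting field'' requires $\ell_{\gamma,n-1}$ to be a $\mathfrak{p}$-unit, which is exactly~(\ref{leadingterm}) together with~(3), while~(2) disposes of a function-field integrality technicality.
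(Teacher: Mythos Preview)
Your proof is correct and follows exactly the paper's approach: use the discriminant recursion of Proposition~\ref{prop:discriminant} together with conditions (1)--(4) to show $\mathfrak{p}_{\gamma,n}$ is unramified in $K_{\gamma,n-1}$, then use (3) and (5) to conclude $\ell_{\gamma,1}\gamma_n^+(c)$ is a nonsquare there, and finish via Proposition~\ref{prop:maximality}. The paper's own proof is a terse three-sentence version of what you wrote; your added detail (the explicit induction on $m$ for $v_{\mathfrak p}(\Delta_{\gamma,m})$ and the passage to a prime $\mathfrak{P}$ above $\mathfrak{p}$) merely fills in steps the paper leaves implicit.
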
 
\begin{proof} By the discriminant formula in Proposition \ref{prop:discriminant}, if $\mathfrak{p}_{\gamma,n}$ has properties $(1)$-$(4)$ above, then $\mathfrak{p}_{\gamma,n}$ must be unramified in $K_{\gamma,n-1}$. Hence properties (3) and (5) together imply that $\ell_{\gamma,1}\gamma_n^+(c)$ cannot be a square in $K_{\gamma,n-1}$. 
In particular, it follows from Proposition \ref{prop:maximality} that $K_{\gamma,n}/K_{\gamma,n-1}$ is maximal for all $n$ sufficiently large. Therefore, $G_{\gamma,K}$ is a finite index subgroup of $\Aut(T_\gamma)$ as claimed.    
\end{proof}
As a consequence of Theorem \ref{maxtest}, we construct examples over the global field $K=\mathbb{Q}(t)$ for which Question \ref{question:Galois} has an affirmative answer; here we take $\mathfrak{o}_K:=\mathbb{Q}[t]$. In what follows, $\frac{d}{dt}$ denotes the usual derivative on polynomials and $\overline{c}\in\mathbb{Z}/2\mathbb{Z}[t]$ denotes the image of $c\in\mathbb{Z}[t]$ under the ring homomorphism $\mathbb{Z}[t]\rightarrow\mathbb{Z}/2\mathbb{Z}[t]$ given by reducing coefficients.  
\begin{theorem}{\label{thm:functionfield}} Let $K=\mathbb{Q}(t)$ and let $S$ be a set of quadratic polynomials of the form $x^2+c$ such that each $c$ satisfies all of the following conditions:   
 \vspace{.1cm}
\begin{enumerate}  
\item[\textup{(1)}] $c\in\mathbb{Z}[t]$ and $\ell(c)=\pm{1}$. \vspace{.2cm}
\item [\textup{(2)}] $\deg(c)=d>0$. \vspace{.1cm}
\item[\textup{(3)}] $\displaystyle{\frac{d}{dt}}\,\overline{c}=1$.\vspace{.1cm}
\end{enumerate}  
Then $G_{\gamma,K}=\Aut(T_\gamma)$ for all $\gamma\in\Phi_S$.  
\end{theorem}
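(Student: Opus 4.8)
The plan is to establish, for every $\gamma=(\theta_i)_{i=1}^\infty\in\Phi_S$ and every $n\geq1$, that $\gamma_n^+$ is irreducible over $K=\QQ(t)$ and that the extension $K_{\gamma,n}/K_{\gamma,n-1}$ is \emph{maximal}, i.e.\ of degree $2^{2^{n-1}}$. Granting this, maximality at every level gives $[K_{\gamma,n}:K]=\prod_{m=1}^{n}2^{2^{m-1}}=2^{2^n-1}=|\Aut(T_{\gamma,n})|$, while $G_{\gamma,n}\leq\Aut(T_{\gamma,n})$, so $G_{\gamma,n}=\Aut(T_{\gamma,n})$ for all $n$ and hence $G_{\gamma,K}=\lim_{\leftarrow}G_{\gamma,n}=\Aut(T_\gamma)$. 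Irreducibility will come from Proposition~\ref{prop:irreducible} and maximality from Theorem~\ref{maxtest} (equivalently Proposition~\ref{prop:maximality}), so everything is reduced to the arithmetic of the critical orbit $A_n:=\gamma_n^+(0)\in\ZZ[t]$ — the common critical point of the maps $x^2+c$ being $0$ and $\ell_{\gamma,1}=1$.

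The engine is reduction modulo $2$. Since reduction is a ring homomorphism and squaring is additive in characteristic $2$, one finds $\overline{\gamma_n^+}(x)=x^{2^n}+\sum_{i=1}^{n}\overline{c_i}^{2^{i-1}}$ in $\Ftwo[t][x]$, hence $\overline{A_n}=\sum_{i=1}^{n}\overline{c_i}^{2^{i-1}}$. For $i\geq2$ the summand $\overline{c_i}^{2^{i-1}}$ is a square, so its $t$-derivative vanishes, and therefore $\tfrac{d}{dt}\overline{A_n}=\tfrac{d}{dt}\overline{c_1}=1$ by hypothesis~(3). Consequently $\gcd(\overline{A_n},\tfrac{d}{dt}\overline{A_n})=1$: the polynomial $\overline{A_n}$ is squarefree in $\Ftwo[t]$, and it is nonconstant since its derivative is nonzero. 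An immediate consequence is that neither $A_n$ nor $-A_n$ is a square in $\QQ(t)$: a polynomial square in $\QQ(t)$ is, after removing the (square) content, the square of an element of $\ZZ[t]$, whose reduction mod $2$ would be a square in $\Ftwo[t]$ — impossible for the squarefree nonconstant $\overline{A_n}$. Feeding the non-squares $-A_1,A_2,A_3,\dots$ into Proposition~\ref{prop:irreducible} (with $\ell_{\gamma,1}=1$ and critical point $0$) shows that $\gamma_n^+$ is irreducible over $\QQ(t)$ for every $n\geq1$.

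For maximality I would invoke Theorem~\ref{maxtest}. Conditions (1) and (3) there are automatic, since each $\theta_m=x^2+c$ is monic with coefficients in $\ZZ[t]$: every finite prime $\mathfrak{p}$ of $\QQ(t)$ has $v_{\mathfrak{p}}(2)=v_{\mathfrak{p}}(\ell(\theta_m))=0$, and we may take $\mathfrak{p}\neq\mathfrak{p}_0=\infty$ by using finite primes. Iterating the discriminant formula of Proposition~\ref{prop:discriminant} with $d(\theta_m)=2$ and $\ell_{\gamma,m}=1$ gives $\Delta_{\gamma,n-1}=\pm2^{e}\prod_{m=1}^{n-1}A_m^{2^{n-1-m}}$, so every finite prime of $\QQ(t)$ that ramifies in $K_{\gamma,n-1}$ divides $\prod_{m<n}A_m$ ($2$ being a unit in $\QQ[t]$). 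Hence it suffices, for each $n\geq1$, to produce a monic irreducible $q(t)\in\QQ[t]$ with $v_q(A_n)$ odd and $q\nmid A_m$ for all $m<n$: a primitive prime divisor of the critical orbit. Such a $q$ is unramified in $K_{\gamma,n-1}$ and of odd valuation at $A_n$ at every prime above it, which forces $A_n$ to be a non-square in $K_{\gamma,n-1}$, and Proposition~\ref{prop:maximality} then yields maximality at level $n$.

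The primitive-prime-divisor statement is the heart of the proof and the step I expect to be hardest. We already know $A_n$ is not a perfect square in $\QQ[t]$, so it has irreducible factors of odd multiplicity; moreover squarefreeness of $\overline{A_n}$ shows that any irreducible $Q$ with $Q^2\mid A_n$ must reduce to a constant mod $2$, which constrains the \emph{repeated} part of $A_n$ severely. To show some odd-multiplicity factor is \emph{new}, I would bound the non-primitive part $E_n:=\prod\{q:q\mid A_n,\ q\mid A_m\text{ for some }m<n\}$. Writing $\gamma_{n-1}^+=\gamma_m^+\circ(\theta_{m+1}\circ\cdots\circ\theta_{n-1})$ and $\gamma_m^+(y)=\gamma_m^+(0)+y\,g_m(y)$, one gets $A_n-A_m=w_m\,g_m(w_m)$ with $w_m:=(\theta_{m+1}\circ\cdots\circ\theta_n)(0)$, so every common prime of $A_n$ and $A_m$ divides $A_n-A_m$; combining this with the squarefreeness information and with the $t$-degrees of $A_n$, of the $A_m$, and of $w_m$ should give $\deg_t E_n<\deg_t A_n$. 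Here the hypotheses $\ell(c)=\pm1$ and $\deg c=d>0$ are precisely what control these $t$-degrees and the cancellations among leading terms, whose parity pattern is in turn pinned down by hypothesis~(3). Once $\deg_t E_n<\deg_t A_n$ is known uniformly in $n$, a primitive prime exists, Proposition~\ref{prop:maximality} gives maximality at every level, and the order count in the first paragraph yields $G_{\gamma,K}=\Aut(T_\gamma)$. Making this degree bookkeeping go through uniformly — over all $n$ and all $\gamma\in\Phi_S$, in the presence of possibly drastic leading-term cancellations — is the genuine obstacle.
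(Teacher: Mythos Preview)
Your overall strategy---irreducibility via Proposition~\ref{prop:irreducible}, maximality via a primitive prime in the critical orbit fed into Theorem~\ref{maxtest}/Proposition~\ref{prop:maximality}---is exactly the paper's, and your mod~$2$ computation showing $\tfrac{d}{dt}\overline{A_n}=1$ (hence $\overline{A_n}$ squarefree in $\Ftwo[t]$) is the right key ingredient. The gap is that you stop one inference short of the decisive fact and then vastly overcomplicate the endgame.

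The missing inference: hypotheses~(1) and~(2) give $\deg A_n=2^{n-1}d$ and $\ell(A_n)=\ell(c_n)^{2^{n-1}}=\pm1$ by a straightforward induction, so $A_n\in\ZZ[t]$ is primitive and every irreducible factor $Q\in\ZZ[t]$ of $A_n$ satisfies $\ell(Q)=\pm1$ as well. Thus $\overline{Q}$ is \emph{nonconstant} in $\Ftwo[t]$, and your own squarefreeness of $\overline{A_n}$ then forbids $Q^2\mid A_n$. In other words, $A_n$ is squarefree in $\QQ[t]$, not merely ``not a square''. This is exactly what the paper proves (by the equivalent contrapositive: if $A_n=f_ng_n^2$ with $g_n$ nonconstant, then $\ell(g_n)=\pm1$ forces $\overline{g_n}$ nonconstant, while $\tfrac{d}{dt}\overline{A_n}=(\tfrac{d}{dt}\overline{f_n})\,\overline{g_n}^2=1$ forces $\overline{g_n}$ to be a unit).

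Once $A_n$ is squarefree with $\deg A_n=2^{n-1}d$, the step you flag as ``the genuine obstacle'' evaporates. If every irreducible factor of $A_n$ already divides some $A_m$ with $m<n$, then squarefreeness gives $A_n\mid\prod_{m<n}A_m$, whence
\[
2^{n-1}d=\deg A_n\;\leq\;\sum_{m=1}^{n-1}\deg A_m=(2^{n-1}-1)d,
\]
a contradiction. So a primitive prime $\mathfrak{p}_{\gamma,n}$ exists for every $n\geq2$, and by squarefreeness it automatically satisfies $v_{\mathfrak{p}_{\gamma,n}}(A_n)=1$, which is odd. Your machinery with $E_n$, $w_m$, and leading-term cancellations is unnecessary: there is no non-primitive \emph{part} to bound, because there is no repeated part at all.
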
 
\begin{example} In particular, the set $S=\big\{x^2+(-t^2+t+3),\; x^2+(t^2-5t)\big\}$ satisfies the hypothesis of Theorem \ref{thm:functionfield} above (with $d=2$).  
\end{example} 
\begin{remark} Although the conditions in Theorem \ref{thm:functionfield} may seem strange, their utility may be summarized as follows: conditions (1) and (3) ensure that $\gamma_n^+(0)$ is square-free and condition (2) ensures that $\deg(\gamma_n^+(0))=2^{n-1}d$. In particular, putting these facts together we deduce that $\gamma_n^+(0)$ has an irreducible factor appearing to exponent $1$, which is coprime to $\gamma_m^+(0)$ for all $1\leq m\leq n-1$ (by simple degree considerations). In particular, it follows that $K_{\gamma,n}/K_{\gamma, n-1}$ is maximal for all $n\geq1$ by Proposition \ref{prop:maximality}. 
\end{remark} 
\begin{proof} Suppose that $S$ conditions (1)-(3) of Theorem \ref{thm:functionfield} hold, and let $\gamma=(\theta_n)_{n=1}^\infty\in\Phi_S$. Then it follows easily by induction, using only that $\deg(f+g)=\max\{\deg(f),\deg(g)\}$ when $\deg(f)\neq\deg(g)$ and $\deg(f^2)=2\deg(f)$, that \vspace{.1cm} 
\begin{equation}\label{fact1}
\deg(\gamma_n^+(0))=2^{n-1}d\;\;\;\; \text{for all $n\geq1$, $\gamma\in\Phi_S$}.  
\end{equation}
Likewise, the leading term $\ell(\gamma_n^+(0))=\pm{1}$ by property (1) above. In particular, $\gamma_n^+(0)\in\mathbb{Z}[t]$ is a primitive polynomial (the gcd of its coefficients is $1$). We next show that each polynomial $\gamma_n^+(0)\in\mathbb{Q}[t]$ (a unique factorization domain) is square-free. To see this, suppose for a contradiction, that $\gamma_n^+(0)=f_n\cdot g_n^2$ for some $f_n,g_n\in\mathbb{Q}[t]$ and some non-constant $g_n$. Note that by Gauss' Lemma, we can assume that $f_n,g_n\in\mathbb{Z}[t]$; here we use that $\gamma_n^+(0)$ is primitive. In particular, after writing $\theta_1=x^2+c$ for some $c$ satisfying (1)-(3) above, we have that  
\begin{equation}\label{fact2}
f_n\cdot g_n^2=\gamma_n^+(0)=y_n^2+c
\end{equation}   
for some $y_n\in\mathbb{Z}[t]$. Moreover, since the leading term of $\gamma_n^+(0)$ is $\pm{1}$, the leading term of $g_n$ must be $\pm{1}$ also. Therefore, $\deg(g)=\deg(\overline{g})>0$, and the reduction of $g$ modulo $2$ is non-constant. On the other hand, after reducing coefficients and taking derivatives of both sides of (\ref{fact2}), we see that 
\[\Big(\frac{d}{dt}\overline{f_n}\,\Big)\cdot{\overline{g_n}}^2=\frac{d}{dt}\overline{c}=1\] 
by property (3). Hence, $\bar{g_n}$ is a unit in $\mathbb{Z}/2\mathbb{Z}[t]$. However,  this contradicts the fact that $\deg(\overline{g})>0$. Therefore, $\gamma_n^+(0)\in\mathbb{Q}[t]$ is square-free as claimed. We use this fact to analyze the relevant  Galois groups. 

Note first that since $\gamma_n^+(0)$ is non-constant and square-free in $\mathbb{Q}[t]$, Proposition \ref{prop:irreducible} implies that $\gamma_n^+$ is irreducible over $K=\mathbb{Q}(t)$ for all $n\geq1$. Likewise, if no prime $\mathfrak{p}_n$ (corresponding to an irreducible polynomial) of $\mathbb{Q}[t]$ as in Theorem \ref{maxtest} exists for $n\geq2$, then each irreducible factor $q(t)$ of $\gamma_n^+(0)$ must also divide some $\gamma_{m_q}^+(0)$ for some $1\leq m_q\leq n-1$: conditions (1)-(3) of Theorem \ref{maxtest} hold trivially, and condition (5) holds since $\gamma_n^+(0)$ is square-free. In particular, it follows that the polynomial $\gamma_n^+(0)$ divides the product $\gamma_1^+(0)\gamma_2^+(0)\cdots\gamma_{n-1}^+(0)$.
However, in this case we deduce from (\ref{fact1}) that 
\[2^{n-1}d=\deg(\gamma_n^+(0))\leq\deg(\gamma_1^+(0)\gamma_2^+(0)\cdots\gamma_{n-1}^+(0))=d+ 2d+\dots 2^{n-2}d=(2^{n-1}-1)d.\]
But this inequality forces $d=0$, a contradiction. Therefore, for all $n\geq2$ a prime $\mathfrak{p}_n$ of $K=\mathbb{Q}(t)$ as in Theorem \ref{maxtest} exists. In particular, the argument in the proof Theorem \ref{maxtest} implies that the extensions $K_{\gamma,n}/K_{\gamma,n-1}$ are maximal for all $n\geq2$. Likewise, since $-\gamma_1^+(0)$ is not a square in $K$ (it's square-free), the extension $K_{\gamma,1}/K$ is also maximal. Hence, $G_{\gamma,K}=\Aut(T_\gamma)$ for all $\gamma\in\Phi_S$ as claimed.             
\end{proof} 
 
\end{document}